\numberwithin{equation}{section}
\newtheorem{theorem}{Theorem}[section]
\newtheorem{lemma}[theorem]{Lemma}
\newtheorem{remark}[theorem]{Remark}
\newtheorem{corollary}[theorem]{Corollary}
\newtheorem{proposition}[theorem]{Proposition}
\newtheorem{definition}[theorem]{Definition}
\newtheorem{example}[theorem]{Example}
\def\eqref#1{(\ref{#1})}
\def\enddoc{\end{document}}
\def\FRAME#1#2#3#4#5#6#7#8
\begin{document}

\author{Lu Hao}
\address{Universit\"{a}t Bielefeld, Fakult\"{a}t f\"{u}r Mathematik, Postfach 100131, D-33501, Bielefeld, Germany}
\email{lhao@math.uni-bielefeld.de}
	
\author{Yuhua Sun}

\address{School of Mathematical Sciences and LPMC, Nankai University, 300071
		Tianjin, P. R. China}
\email{sunyuhua@nankai.edu.cn}
\title[On the equivalence of $L^p$-parabolicity, $L^q$-Liouville property]{On the equivalence of $L^p$-parabolicity, $L^q$-Liouville property on weighted graphs}

\thanks{\noindent L. Hao was funded by the Deutsche Forschungsgemeinschaft(DFG, German Research Foundation)-Project-ID317210226-SFB 1283. Y. Sun was funded by the National Natural Science Foundation of
	China (Grant No.12371206).}

\subjclass[2020]{Primary 31B35, 31E05; Secondary 35J91}
\keywords{Weighted graph, $L^p$-parabolicity, $L^q$-Liouville property, Green function, Volume growth}

\begin{abstract}
	We study the equivalence between the $L^p$-parabolicity, the $L^q$-Liouville property of positive super-harmonic functions, and the existence of nonharmonic positive solutions to the following elliptic differential system
	\begin{equation*}
		\left\{
		\begin{array}{lr}
			-\Delta u\geq 0,\\
			\Delta(|\Delta u|^{p-2}\Delta u)\geq 0,
		\end{array}
		\right.
	\end{equation*}	
    on weighted graphs, where  $1\leq p< \infty$, and $(p, q)$ are H\"{o}lder conjugate exponent pair.
Furthermore, by refining a new technique on estimate of heat kernel,
we can establish two-sided estimates of Green function on graph, and find the sharp volume growth criteria for the $L^q$-Liouville property on a large class of graphs. As an application, many non-trivial interesting examples
are presented.
\end{abstract}

\maketitle

\section{introduction}
 The notion of parabolicity originates from the classification of Riemannian surfaces, see \cite{Ahlfors}. So far there are several equivalent characterizations
 from Stochastic analysis, PDEs, or Potential analysis to define
 a manifold or a graph to be parabolic, for example, by the recurrence of Brownian motion, or the non-existence of a positive Green function of the Laplace-Beltrami operator,
  or the nonexistence of nontrivial positive super-harmonic function, or the harmonic capacity of some/every compact set is zero.

  In this paper, we  introduce another so-called $L^p$-capacity (see Definition \ref{lpcap}) to define a weighted graph to be $L^p$-parabolic. On manifold similar $L^p$-capacity is also introduced by
  Grigor'yan, Pessoa and Setti in  \cite{GPS}. We emphasize the $L^p$-capacity discussed in this paper is totally different from the $p$-harmonic capacity defined in \cite{AFS}, and the latter is defined via the $p$-energy in (\ref{pcap}).

   The Liouville type theorem plays a very important role in PDEs,  Harmonic analysis, and Geometric analysis. Among these Liouville theorems, people
  are very attracted to study the rigidity of (super-)harmonic function. The classical Liouville theorem states that any bounded harmonic function on $\mathbb{R}^n$ is constant. It is easy to verify that if $u\in L^q(\mathbb{R}^n)$ is harmonic for $1< q<\infty$, then $u\equiv0$. While, if we replace harmonicity with super-harmonicity, more precisely, if $u\in L^q(\mathbb{R}^n)$ is a positive super-harmonic function for $1< q<\infty$, then $u\equiv 0$ is valid only when $1<q\leq\frac{n}{n-2}$. Here $\frac{n}{n-2}$ can not be improved, a simple counter example can be obtained
  by  the function
  $$u(x)=\frac{1}{(1+|x|^2)^{\frac{1}{p-1}}},$$
  for $p>\frac{n}{n-2}$.  Indeed, $0<u\in L^q(\mathbb{R}^n)$ for $q>\frac{n(p-1)}{2}$ and is super-harmonic, see \cite{Miti}.

 For the endpoints case,  while, if $u\in L^{1}(\mathbb{R}^n)$ is a positive super-harmonic function, then $u\equiv0$.
  And if $u\in L^{\infty}(\mathbb{R}^n)$ is a positive super-harmonic function, then $u\equiv0$ holds only when $n\leq 2$.

  The above observation is closely related to the so-called $L^q$-Liouville property on manifold $M$, which says that if $u\in L^q(M)$ is a positive super-harmonic function, and  $u\equiv0$, then we call $M$ admits $L^q$-Liouville property. In \cite{GPS}, Grigor'yan, Pessoa and Setti established the equivalence between $L^p$-parabolicity and
  $L^q$-Liouville property on manifolds, where $p$ and $q$ are H\"{o}lder conjugate exponents. Moreover, they also gave some sharp volume growth sufficient conditions to verify whether a  manifold $M$ admits $L^p$-parabolicity. Their proof relied on the potential tools and combining with the estimate of green function.

  In this paper, motivated by Grigor'yan, Pessoa, Setti's work \cite{GPS}, we aim to establish such equivalence between $L^p$-parabolicity and $L^q$-Liouville property of weighted graphs. Moreover, we found both are equivalent to the nonexistence of non-trivial harmonic solution
  to some elliptic system, see Theorem \ref{tm1}. By refining the technique used in \cite{CG2, De}, and establishing a ``good" two-sided estimate
  of green function, we can establish sharp volume growth condition criteria for $L^p$-parabolicity and $L^q$-Liouville property of weighted graphs.

 Throughout the paper,	let $G=(V,E)$ be an infinite, connected, locally finite graph. Here $V$ denotes the vertex set, and $E$ denotes the edge set.
If there exists an edge connecting $x$ and $y$,  we denote it by $x\sim y$. The edge $x\sim x$ is called a loop. A graph $G$ is called simple if there is  no loop.

Let $\mu: V\times V\rightarrow[0,\infty)$ be an edge weight, and denote it by $\mu_{xy}:=\mu(x,y)$.
Note $\mu_{xy}>0$ if and only if $x\sim y$. Moreover, $\mu_{xy}=\mu_{yx}$.
The weight of a vertex $x$ is defined by
$$\mu(x)=\sum_{y\sim x}\mu_{xy}.$$
Under the edge and vertex weights, such graph $(V,E, \mu)$ is called a weighted graph,
and  usually simplified as $(V, \mu)$.



For any two vertices $x$ and $y$, let $d(x,y)$
 be the minimal number of edges among all possible paths connecting $x$ and $y$ on  graph $(V,\mu)$, then
$d(\cdot,\cdot)$ is a distance function on $V\times V$, and called the graph distance.
Fix some vertex $o\in V$, and for $r>0$, denote
$$B(o,r):=\{x\in V|\ d(o,x)\leq r\},$$
and
\begin{equation*}\label{Vol}
	V(o,r):=\mu(B(o,r)).
\end{equation*}

A random walk $\{X_n\}$ on a locally finite weighted graph $(V,\mu)$ is a Markov chain with the following transition probability
\begin{equation}\label{P-tran}
	P(x,y)=\begin{cases}
		\frac{\mu_{xy}}{\mu(x)}, & \mbox{if $x\sim y$},\\
		0, & \text{otherwise},
	\end{cases}
\end{equation}
Such Markov chain is also denoted by the pair $(V,P)$. Since
$\mu(x)P(x,y)=\mu(y)P(y,x)$, then the above Markov Chain is called reversible.
Conversely, a reversible Markov chain on $V$ can determine a weighted graph $(V, E, \mu)$ by letting
\begin{equation*}
	\mu_{xy}:=P(x,y)\mu(x),
\end{equation*}
and  defining edge set $E=\{x\sim y|\mu_{xy}>0\}$.

For our convenience, let us denote the n-step transition function as
\begin{equation*}
	P_n(x,y):=\mathbb{P}_x[X_n=y]=\mathbb{P}[X_0=x, X_n=y].
\end{equation*}
Hence, $P_0(x,y)=\delta_x(y)$, and $P_1(x,y)=P(x,y)$.

Let $\ell(U)$ be the collection of all real functions on $U\subset V$, $\ell_0(U)$ be the subset of $\ell(U)$ with finite support, and $\ell^{+}(U)$ be the set of non-negative functions.
Moreover, for $1\leq p <\infty$,  define
$$L^p(U)=\{f\in \ell(U): \sum_{x\in U}|f(x)|^p\mu(x)< \infty \},$$
 and
 $$L^{p}_{+}(U)=L^p(U)\cap \ell^{+}(U).$$

The Laplace operator $\Delta: \ell(V)\rightarrow\ell(V)$ on  $(V, \mu)$ is defined by
\begin{align*}
	\Delta u(x)&=\frac{1}{\mu(x)}\sum_{y\sim x}\mu_{xy}(u(y)-u(x))\\
	&=\sum_{y\in V}P(x,y)(u(y)-u(x))\\
	&=(P-I)u(x),
\end{align*}
where the Markov operator $P$ is defined by
$$Pu(x)=\sum\limits_{y\in V}P(x,y)u(y).$$

Let
$$p_n(x,y)=\frac{P_n(x, y)}{\mu(y)}.$$
It follows by reversibility that
$$p_n(x,y)=p_n(y,x).$$
The Green function of  $\Delta$ on $(V, \mu)$ is defined by
\begin{align}\label{gf}
	g(x,y)=\sum\limits_{n=0}^{\infty}p_n(x,y),
\end{align}
which may take the value $+\infty$, and hence $g(x,y)=g(y,x)$.

For $u\in\ell^{+}(V)$, the Green operator $G$ is defined by
$$Gu(x)=\sum\limits_{y\in V}g(x,y)u(y)\mu(y),$$
where $Gu(x)$ is allowed to take the value $+\infty$.

Fix $A\subset V$ and  $\nu \in \ell^{+}(V)$, define
$$\nu (A)=\sum_{x\in A}\nu(x).$$

Now fix $1\leq p<\infty$, let us define the $L^p$\text{-}capacity of a finite set $K\subset V$ by
\begin{align}\label{CpK}
C_p(K)=\text{sup}\{\nu(K)^p: \nu=f\mu, f\in \ell^{+}(K), \left\Vert Gf\right\Vert_{L^{q}(V)}\leq 1\},
\end{align}
where the norm $\left\Vert Gf\right\Vert_{L^q(V)}$ is defined by
$$\left\Vert Gf\right\Vert_{L^q(V)}=\left(\sum_{x\in V}|Gf(x)|^q\mu(x)\right)^{\frac{1}{q}},$$
and $q$ is the H\"{o}lder conjugate number of $p$, namely, $\frac{1}{p}+\frac{1}{q}=1$.
\begin{definition}\label{def-lp-para}\rm
	 A graph $(V,\mu)$ is called $L^p$\text{-}parabolic if $C_p(K)=0$ for every finite subset $K\subset V$.
\end{definition}

\begin{definition}\label{def-lq-liouv}\rm
	We say that a graph $(V,\mu)$ admits $L^q$\text{-}Liouville property if any superharmonic function $u\in L^q_{+}(V) $   must be zero.
\end{definition}

A random walk is called recurrent if it returns to the starting vertex infinitely many times. In this case, the underlying graph is called parabolic.
 Otherwise, the graph is called non-prabolic. There are various equivalent characterizations of parabolicity in terms of different fields, for instance, any positive super-harmonic function is a constant or equivalently the capacity of any finite set is zero, see \cite{MBbook, AGbook, WWbook}. These characterizations provide critical insights into the connections between stochastic processes, graph theory and potential theory.


For $1<p<\infty$, we call a manifold (resp. graph) is $p$\text{-}parabolic if for any compact (resp. finite) set $K$, where the $p$\text{-}capacity of $K$, which is defined  on manifold $M$   by
$$\text{Cap}_p(K)=\text{inf}\{\int_{M} |\nabla u|^p : u \in W_0^{1,p}(M) \cap C_0^\infty(M), \, u \geq 1 \text{ on } K\},$$
and on graph $(V,\mu)$ respectively by
\begin{align}\label{pcap}
	\text{Cap}_p(K)=\text{inf}\{\sum_{x, y\in V}\mu_{xy}|u(y)-u(x)|^p: u\in \ell_0(V), u \geq 1 \text{ on } K\}.
\end{align}
equals to zero.
The $p$\text{-}parabolicity has been well studied in both manifolds and graphs, see \cite{ CHS, Hol1,S95, Sor, Tro, Yam}. In particular, Holopainen and Saloff-Coste, working on manifolds \cite{Hol1} and graphs \cite{S95} respectively, proved that $p$\text{-}parabolicity is equivalent to that the only non-negative $p$\text{-}superharmonic function is constant. For a very recent, comprehensive treatment of various characterizations of $p$-parabolicity in the graph setting, one can refer to the work of Adriani, Fischer and Setti \cite{AFS}. Recall on graph $(V,\mu)$, the $p$\text{-}Laplacian is defined by
$$\Delta_p u(x)=\sum_{y\sim x}\frac{\mu_{xy}}{\mu(x)}|u(y)-u(x)|^{p-2}(u(y)-u(x)), \quad\mbox{for $u\in \ell(V)$}.$$
It follows from $p$-parabolicity that $L^p$-parabolicity automatically hold, see Remark \ref{rem-p par}.

Furthermore, in the settings of manifold and graph, the volume growth sharp criteria for p\text{-}parabolicity are obtained, see \cite{CHS, S95}.
Let us cite a simple corollary here: if on a connected, complete manifold $M$ (resp. a connected graph $G=(V,\mu)$) satisfies, for some $o\in M$ (resp. $o\in V$), the volume growth condition
\begin{equation*}
	V(o, r)\lesssim r^p (\log r)^{p-1},
\end{equation*}
holds, then the manifold $M$ (resp. the grahp $G$) is p\text{-}parabolic.

 A manifold $M$ is called biparabolic if any non-negative solution of the system
\begin{equation}\label{bipeq}
	\left\{
	\begin{array}{lr}
		-\Delta u\geq 0,\\
		\enspace\Delta^2 u\geq 0.
	\end{array}
	\right.
\end{equation}
on $M$ is harmonic, that is, $\Delta u=0$. In \cite{FG} Faraji and Grigor'yan studied the biparabolicity of Riemannian manifolds, and obtained a nearly optimal
criterion condition, namely,  if the manifold $M$ is geodesically complete and satisfies
\begin{equation}
V(x_0, r)\lesssim \frac{r^4}{\log r},
\end{equation}
then $M$ is
biparabolic. Here $V(x_0, r)$ stands for the Riemannian volume of geodesic ball centered at $x_0$ with radius $r$.

Recently, Grigor'yan, Pessoa, and Setti \cite{GPS} investigated the equivalence between the $L^p$\text{-}parabolicity  and the $L^q$\text{-}Liouville property on Riemannian manifolds, where $p$ and $q$ are H$\ddot{o}$lder conjugate exponents. Additionally, they found that biparabolicity of manifold is equivalent to $L^2$\text{-}parabolicity. Furthermore, they derived some volume conditions characterizing $L^p$\text{-}parabolicity.

Motivated by these results, our object is to study the relationship on graph $(V,\mu)$ between $L^p$\text{-}parabolicity, $L^q$\text{-}Liouville property, and the existence of non-negative nonharmonic solution to the following system
\begin{equation}\label{equ}
	\left\{
	\begin{array}{lr}
		-\Delta u\geq 0,\\
		\Delta(|\Delta u|^{p-2}\Delta u)\geq 0,
	\end{array}
	\right.
\end{equation}	
 where $\frac{1}{p}+\frac{1}{q}=1$. As a particular case $p=2$, (\ref{equ}) is simplified to (\ref{bipeq}). Notably, given the first inequality, the second inequality in \eqref{equ} should be interpreted as $-\Delta(|\Delta u|^{p-1})\geq 0$.


\begin{definition}\label{def-bipar}\rm
  A graph $(V,\mu)$ is called biparabolic if any non-negative solution of system (\ref{bipeq}) on $(V,\mu)$ is harmonic.
\end{definition}

Our main result is announced as follows.
\begin{theorem}\label{tm1}\rm
	For $1\leq p< \infty$, the following three conditions are equivalent.
	\begin{enumerate}
		\item[(I).]{$(V,\mu)$ is $L^p$\text{-}parabolic. }
		\item[(II).]{$(V,\mu)$ admits $L^q$\text{-}Liouville property.}
		\item[(III).]{Any non-negative solution of system (\ref{equ}) on $(V,\mu)$ is harmonic.}
	\end{enumerate}
\end{theorem}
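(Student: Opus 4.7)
The natural strategy is to establish a cycle of implications, with (I)$\Leftrightarrow$(II) as the potential-theoretic core and (III) folded in by explicit constructions that transform solutions of \eqref{equ} into positive $L^q$ superharmonic functions and back.

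First I would prove (I)$\Leftrightarrow$(II) using the Riesz decomposition on graphs: every non-negative superharmonic $u$ decomposes as $u = Gf + h$ with $f = -\Delta u \ge 0$ and $h = \lim_{n} P^n u$ non-negative and harmonic. The key lemma is a dual reformulation of $L^p$-parabolicity, namely that $(V,\mu)$ is $L^p$-parabolic if and only if every $f \in \ell^+(V)$ with $Gf \in L^q(V)$ satisfies $f \equiv 0$; this follows by exhausting $\mathrm{supp}(f)$ by finite sets and plugging normalized test functions into \eqref{CpK}. Then (I)$\Rightarrow$(II) goes: given superharmonic $u \in L^q_+(V)$, Riesz yields $Gf \le u$, hence $Gf \in L^q$, hence $f \equiv 0$, so $u = h$ is non-negative harmonic in $L^q$. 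To conclude $u \equiv 0$, I would apply Jensen's inequality to $t \mapsto t^q$ (convex for $q > 1$), obtaining $u^q \le P(u^q)$; reversibility then gives $\sum_x P(u^q)(x)\mu(x) = \sum_x u^q(x)\mu(x)$, so the pointwise inequality with equal total mass forces $P(u^q) = u^q$. Equality in Jensen combined with connectedness of $(V,E)$ then forces $u$ to be constant, and under the standing assumption $\mu(V) = \infty$ the only non-negative constant in $L^q$ is $0$. Conversely, (II)$\Rightarrow$(I) is the immediate contrapositive: if $C_p(K) > 0$ for some finite $K$, extracting a witness $f$ from \eqref{CpK} produces $Gf$, a nontrivial positive superharmonic element of $L^q_+(V)$.

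To fold in (III), I would prove (III)$\Leftrightarrow$(I) via an explicit construction together with an analysis of \eqref{equ}. For (III)$\Rightarrow$(I), the contrapositive: if $L^p$-parabolicity fails, pick $f \ge 0$ on a finite $K$ with $Gf \in L^q_+(V)$ and $f \not\equiv 0$, set $v = Gf$ (non-negative superharmonic, in $L^q$) and $h = v^{1/(p-1)} \ge 0$, then define $u = Gh$. Direct computation yields $-\Delta u = h \ge 0$, $|\Delta u|^{p-2}\Delta u = -h^{p-1} = -v$, and $\Delta(|\Delta u|^{p-2}\Delta u) = -\Delta v = f \ge 0$; so $u$ solves \eqref{equ} and is non-harmonic since $h \not\equiv 0$, contradicting (III). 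For the reverse (I)$\Rightarrow$(III), given $u \ge 0$ solving \eqref{equ}, let $h_0 = -\Delta u \ge 0$, so that $v = h_0^{p-1}$ is non-negative superharmonic by the second line; the plan is to use Riesz ($Gh_0 \le u$) together with discrete integration-by-parts against cut-offs $\phi_n \in \ell_0(V)$ to express $\sum \phi_n h_0^p \mu = \sum \phi_n h_0 v \mu$ as a H\"older pairing whose dual factor can be controlled by the $L^p$-capacity of $\mathrm{supp}\,\phi_n$, which vanishes under (I). Passing to the limit $\phi_n \uparrow 1$ gives $v \in L^q$, and then (II) (equivalent to (I)) forces $v \equiv 0$, so $u$ is harmonic.

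The main obstacle is verifying that $u = Gh$ is pointwise finite in the (III)$\Rightarrow$(I) construction. Since $\|h\|_p^p = \|v\|_q^q < \infty$ gives $h \in L^p$, H\"older reduces finiteness of $Gh(x_0)$ to $g(x_0,\cdot) \in L^q$, which is not automatic on an arbitrary graph. I would address this by taking the initial $f$ concentrated at a single vertex so that $v$ inherits the decay of a column of the Green kernel, or by a regularization-and-exhaustion argument where $h$ is truncated on finite subgraphs, the Dirichlet solutions $u_n$ on $B_n$ with zero boundary data are shown to be uniformly bounded via the $L^q$-bound on $v$, and a monotone limit $u_n \uparrow u$ is extracted. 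A secondary subtlety is the Jensen/reversibility step at the endpoint $p = 1$ (so $q = \infty$), where $t \mapsto t$ is linear and the argument degenerates; this boundary case would need a separate treatment using the interpretation of \eqref{equ} as given in the remark following it, together with the classical description of positive superharmonic functions via the Martin boundary.
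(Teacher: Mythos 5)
Most of your architecture is sound and, apart from packaging, close to the paper's: the paper routes all three conditions through a single pivot condition, namely $g_q(x,y)=\infty$ in Theorem \ref{theo1}(d), while you prove (I)$\Leftrightarrow$(II) directly via the Riesz decomposition plus a Jensen argument for the residual harmonic part (the paper instead disposes of that part by comparing $g(\cdot,x_0)\le\lambda f$ for \emph{any} positive superharmonic $f$, harmonic or not). Your (III)$\Rightarrow$(I) construction $u=G\bigl((Gf)^{q-1}\bigr)$ is exactly the paper's function $g_q(\cdot,x_0)$ once $f$ is concentrated at a single vertex, and you correctly identify that the finiteness of $Gh$ is the real issue there and that it reduces to $g(x_0,\cdot)\in L^q(V)$, which a single-vertex capacity witness (obtained from subadditivity, Proposition \ref{prop-cap-1}, or simply from $g(\cdot,x_0)\le Gf/(f(x_0)\mu(x_0))$) supplies.

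The genuine gap is in (I)$\Rightarrow$(III). You propose to show $v=(-\Delta u)^{p-1}\in L^q(V)$ by writing $\sum\phi_n h_0^p\mu=\sum\phi_n h_0v\,\mu$ as a H\"older pairing ``controlled by the $L^p$-capacity of $\mathrm{supp}\,\phi_n$, which vanishes under (I)''. This cannot work as stated: if the pairing were bounded above by a multiple of a vanishing capacity you would conclude $\sum\phi_n h_0^p\mu\le 0$, i.e. $h_0\equiv0$ on $\mathrm{supp}\,\phi_n$, which is absurd as an intermediate estimate; and the only inequality Riesz gives, $Gh_0\le u$, controls the pairing of $h_0$ against $g(x_0,\cdot)$, not $\|h_0\|_{L^p}$. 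Since the minimum principle only yields $v\gtrsim g(\cdot,x_0)$ (not $\lesssim$), the identity $\sum h_0^p\mu=\sum h_0v\,\mu\gtrsim Gh_0(x_0)$ is a lower bound and goes the wrong way for proving $v\in L^q$. The missing ingredient, which is how the paper closes this implication, is to use that lower bound together with $v^{q-1}=h_0$:
\begin{equation*}
u(x_0)\;\ge\;Gh_0(x_0)\;=\;\sum_{y\in V}g(x_0,y)\,v(y)^{q-1}\mu(y)\;\ge\;c^{\,q-1}\sum_{y\in V}g(x_0,y)\,g(y,x_0)^{q-1}\mu(y)\;=\;c^{\,q-1}g_q(x_0,x_0),
\end{equation*}
so that $g(\cdot,x_0)\in L^q(V)$, which contradicts (I) directly through your own key lemma (take $f=\mathbf{1}_{\{x_0\}}$) without ever proving $v\in L^q$. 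With this substitution the step closes; as written it does not.
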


\begin{remark}\rm
 We  have  some motivational comments:
	\begin{enumerate}

\item[(1)]{	For $p=1$,  the $L^1$\text{-}parabolicity is equivalent to parabolicity, and thus equivalent to that $(V,\mu)$
admits $L^{\infty}$-Liouville property.
By noting that the minimum of a superharmonic function and a constant is again a super-harmonic function, condition (III) is equivalent to the parabolicity of graph.
}

\item[(2)]{For $p=2$,  Theorem \ref{tm1} implies that $L^2$\text{-}parabolicity is equivalent to biparabolicity on $(V,\mu)$.}

\end{enumerate}
\end{remark}

We are ready to give  some sufficient conditions for $L^p$\text{-}parabolicity in terms of volume growth which are similar to the manifold case in \cite{GPS}. First, we give a nearly optimal volume condition.
\begin{theorem}\label{vol-lp}\rm
		For  $1<p\leq 2$, assume there exists a positive constant $b>0$ such that $\mu(x)\geq b$ for any $x\in V$. If for some $o\in V$, there holds
		\begin{equation}\label{5-1}
			V(o,r)\lesssim\frac{r^{2p}}{\log{r}},\quad\mbox{for all large enough $r$}.
		\end{equation}
		Then $(V,\mu)$ is $L^p$\text{-}parabolic.
\end{theorem}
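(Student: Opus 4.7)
My plan is to invoke Theorem \ref{tm1}, reducing the $L^p$-parabolicity to the $L^q$-Liouville property, and to verify the latter via the capacity characterization. Concretely, I will show that for every $y_0\in V$
\begin{equation*}
\|g(\cdot,y_0)\|_{L^q(V)}=\infty,
\end{equation*}
which is sufficient because $Gf(x)\geq f(y_0)\mu(y_0)\,g(x,y_0)$ for any $f\in\ell^{+}(K)$ with $f(y_0)>0$ then forces $\|Gf\|_{L^q}=\infty$, ruling out every non-trivial admissible $f$ in the supremum defining $C_p(K)$ in \eqref{CpK} and so yielding $C_p(K)=0$.

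The central ingredient is a sharp two-sided estimate for the Green function, which is the refinement of the heat-kernel technique of \cite{CG2,De} announced in the introduction. Concretely, the plan is: (i) derive a Nash / Faber--Krahn inequality on balls from the volume hypothesis together with the uniform weight bound $\mu(x)\geq b$; (ii) invert it to obtain an on-diagonal lower bound
\begin{equation*}
p_{2n}(y_0,y_0)\gtrsim \frac{1}{V(y_0,C\sqrt{n})}\gtrsim \frac{\log n}{n^p};
\end{equation*}
(iii) propagate this to the near-diagonal regime $d(x,y_0)^2\lesssim n$ by chaining the one-step inequality $p_{n+1}(x,y_0)\geq P(x,z)\,p_n(z,y_0)$ along a geodesic, using the lower weight bound to control the accumulated transition losses; (iv) sum over $n\geq d(x,y_0)^2$ to produce
\begin{equation*}
g(x,y_0)\gtrsim \frac{\log d(x,y_0)}{d(x,y_0)^{2(p-1)}}.
\end{equation*}

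With (i)--(iv) in hand, the divergence of the $L^q$ norm follows from H\"older's inequality applied on the ball $B=B(y_0,R)$, which yields
\begin{equation*}
\|g(\cdot,y_0)\|_{L^q(V)}^q\geq \frac{\bigl(\sum_{x\in B}g(x,y_0)\mu(x)\bigr)^q}{V(y_0,R)^{q-1}}.
\end{equation*}
The numerator equals the expected occupation time of $B$ by the random walk started at $y_0$, and the near-diagonal heat-kernel estimate of step (iii) shows that this quantity is of order $R^2$ (for each $n\lesssim R^2$ one has $\mathbb{P}_{y_0}(X_n\in B)\gtrsim 1$ by integrating $p_n(y_0,\cdot)$ against $\mu$ on $B(y_0,\sqrt{n})$). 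Combined with $V(y_0,R)\lesssim R^{2p}/\log R$ and the identity $(p-1)q=p$, the exponent of $R$ on the right-hand side collapses to zero and only the factor $(\log R)^{q-1}$ survives, which tends to infinity as $R\to\infty$.

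The main obstacle is step (iii). Without a parabolic Harnack inequality or Poincar\'e inequality there is no direct way of turning an on-diagonal lower bound into a uniform near-diagonal one, and a naive geodesic chaining accumulates exponential losses along the path. The uniform weight assumption $\mu(x)\geq b$ is the quantitative input that enables a refined chaining for which the losses are polynomially controlled; the restriction $1<p\leq 2$ is imposed precisely so that the on-diagonal tail $\sum_{n\geq d^2}(\log n)/n^p$ still dominates those losses. Once this heat-kernel lower bound is secured, the Green function estimate and the H\"older computation above complete the proof.
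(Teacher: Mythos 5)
Your overall target is the right one (showing $\|g(\cdot,o)\|_{L^q(V)}^q=g_q(o,o)=\infty$ and invoking the equivalence of Theorem \ref{theo1}), but the route you propose has a genuine gap at its center, and it is not the paper's route. The paper never needs a near-diagonal or pointwise Green function lower bound. Instead it writes $g_q(o,o)=\sum_n\sum_x p_n(o,x)g(x,o)^{q-1}\mu(x)$ and applies Jensen's inequality to the probability measure $p_n(o,\cdot)\mu$ — legitimate precisely because $q-1\ge 1$ when $p\le 2$ — to obtain the purely on-diagonal lower bound $g_q(o,o)\ge\sum_n\bigl(\sum_{m\ge n}p_m(o,o)\bigr)^{q-1}$. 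It then uses the Lust-Piquard bound $p_n(o,o)\gtrsim 1/V(o,\sqrt{cn\log n})$, which requires only $\inf_x\mu(x)>0$, and the volume hypothesis to conclude divergence. This is why the restriction $1<p\le 2$ appears: it is the convexity condition for Jensen, not (as you assert) a condition ensuring that an on-diagonal tail dominates chaining losses.

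The gap in your argument is step (iii). The hypothesis $\mu(x)\ge b$ gives no lower bound whatsoever on individual transition probabilities $P(x,z)=\mu_{xz}/\mu(x)$ (edge weights may be arbitrarily small and degrees arbitrarily large), so chaining $p_{n+1}(x,y_0)\ge P(x,z)p_n(z,y_0)$ along a geodesic accumulates uncontrolled, potentially super-exponentially small factors. The condition that would rescue this is \hyperref[p0]{$(P_0)$}, which is deliberately \emph{not} assumed in this theorem — it is reserved for the sharp result, Theorem \ref{vol-opt}. Two further points compound the problem. First, your step (ii) claims $p_{2n}(y_0,y_0)\gtrsim 1/V(y_0,C\sqrt n)$; the paper's own remark after this theorem states explicitly that only the weaker bound with radius $\sqrt{cn\log n}$ is available under these hypotheses, and that the optimized bound would upgrade the conclusion to the sharp volume condition $r^{2p}(\log r)^{p-1}$ — so you are assuming more than is known. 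Second, the occupation-time estimate $\sum_{x\in B(y_0,R)}g(x,y_0)\mu(x)\gtrsim R^2$ in your final H\"older step also needs proof; the route via Carne--Varopoulos under the stated hypotheses only confines the walk to $B(y_0,R)$ for times $n\lesssim R^2/\log R$, and with that loss the exponent bookkeeping in your last display yields $(\log R)^{-1}\to 0$ rather than $(\log R)^{q-1}\to\infty$. The Jensen reduction is the idea you are missing; with it, everything off-diagonal disappears and the proof closes.
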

 To obtain a sharp volume growth condition,  some other geometric conditions on graph are also needed.
\begin{definition}\rm
We say that the weighted graph $(V,\mu)$ satisfies volume doubling condition \hyperref[VD]{$\text{VD}$}, if
for all $x\in V$ and all $r>0$, the following holds
\begin{equation}\label{VD}
	V(x,2r)\lesssim V(x,r). \tag{$\text{VD}$}
\end{equation}
\end{definition}

\begin{definition}\rm
We say a weighted graph $(V,\mu)$ admits the Poincar\'{e} inequality \hyperref[PI]{$\text{PI}$}, if
for all $x_0\in V$, all $r>0$, and all $f\in \ell(V)$, there holds
\begin{equation}\label{PI}
	\sum\limits_{x\in B(x_0,r)} |f(x)-f_B|^2\mu(x)\lesssim r^2\sum\limits_{x,y\in B(x_0,2r)}\mu_{xy}(f(y)-f(x))^2,\tag{\text{PI}}
\end{equation}
where
$$f_B=\frac{1}{V(x_0,r)}\sum\limits_{x\in B(x_0,r)}f(x)\mu(x).$$
\end{definition}

\begin{definition}\rm
We say that the weighted graph $(V,\mu)$ satisfies $(P_0)$ condition if
\begin{equation}\label{p0}
	\frac{\mu_{xy}}{\mu(x)}\geq \alpha\quad\mbox{when $y\sim x$} \tag{$P_{0}$}.
\end{equation}
\end{definition}

Under the conditions of \hyperref[VD]{$(\text{VD})$}, \hyperref[PI]{$(\text{PI})$} and \hyperref[p0]{$(P_0)$}, we introduce a new operation technique on
$(V,\mu)$ which is different from the one in the existing literature (see \cite{CG2, De}), and we can drop the loop assumption in Delmotte's heat kernel estimate in \cite{De}. Using this improvement, we can derive the following Li-Yau type Green function estimate:
\begin{theorem}\label{GF-est}\rm 
	Assume the weighted graph $(V,\mu)$ satisfies \hyperref[VD]{$(\text{VD})$}, \hyperref[PI]{$(\text{PI})$} and \hyperref[p0]{$(P_0)$}, then
	\begin{equation} \label{bdg}
g(x,y)\simeq \sum\limits_{n=d(x,y)}^{\infty}\frac{n}{V(x,n)}.
	\end{equation}
\end{theorem}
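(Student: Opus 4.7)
The plan is to reduce Theorem \ref{GF-est} to a Li--Yau type two-sided Gaussian estimate for $p_n(x,y)$ and then sum over time. Under \hyperref[VD]{$(\text{VD})$}, \hyperref[PI]{$(\text{PI})$} and \hyperref[p0]{$(P_0)$}, the first task is to establish, for all $x,y\in V$ and $n\geq d(x,y)$,
\begin{equation*}
\frac{c_1}{V(x,\sqrt n)}\exp\!\Bigl(-\frac{C_1 d(x,y)^2}{n}\Bigr) \leq p_n(x,y) \leq \frac{c_2}{V(x,\sqrt n)}\exp\!\Bigl(-\frac{C_2 d(x,y)^2}{n}\Bigr).
\end{equation*}
This is the direct analogue of Delmotte's theorem in \cite{De}, and once it is available \eqref{bdg} will follow by a summation argument that uses only \hyperref[VD]{$(\text{VD})$}.

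The decisive issue, already flagged in the introduction, is that the proofs in \cite{CG2, De} use the loop condition $P(x,x)\geq\alpha>0$ in an essential way to rule out the bipartite/parity obstruction to the off-diagonal lower bound and to compare $p_n$ with $p_{n+1}$. I would work instead with the smoothed kernel $q_n(x,y)=\tfrac12\bigl(p_n(x,y)+p_{n+1}(x,y)\bigr)$, push the standard Nash / Faber--Krahn / Davies chaining arguments through on $q_n$ (these steps use only \hyperref[VD]{$(\text{VD})$} and \hyperref[PI]{$(\text{PI})$} and pass verbatim from \cite{CG2, De}), and then transfer the resulting two-sided bounds back to $p_n$ via the one-step identity
\[
p_{n+1}(x,y)=\sum_{y_0\sim y} P(y,y_0)\, p_n(x,y_0),
\]
combined with \hyperref[p0]{$(P_0)$} (which provides $P(y,y_0)\geq\alpha$ for $y_0\sim y$) and the one-Lipschitz bound $|d(x,y_0)-d(x,y)|\leq 1$. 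This is the new operation that replaces the loop perturbation of \cite{De}, and it is where the main technical effort lies.

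Granted the heat kernel estimate, \eqref{bdg} is obtained by splitting $g(x,y)=\sum_{n\geq d(x,y)} p_n(x,y)$ into $n\geq d(x,y)^2$ and $d(x,y)\leq n<d(x,y)^2$. On the first range the Gaussian factor is uniformly comparable to $1$, and grouping the integers $n$ by $k=\lfloor\sqrt n\rfloor$ (each value of $k$ receiving $\asymp k$ values of $n$) yields
\[
\sum_{n\geq d(x,y)^2}\frac{1}{V(x,\sqrt n)} \asymp \sum_{k\geq d(x,y)}\frac{k}{V(x,k)},
\]
which is the right-hand side of \eqref{bdg}. On the second range, the polynomial lower bound $V(x,\sqrt n)\gtrsim(\sqrt n/d(x,y))^\beta V(x,d(x,y))$ coming from \hyperref[VD]{$(\text{VD})$} (with $\beta$ the doubling exponent), together with the substitution $u=d(x,y)^2/n$, reduces the contribution to a convergent integral of size $C\,d(x,y)^2/V(x,d(x,y))$, which by \hyperref[VD]{$(\text{VD})$} is comparable to the single dyadic block $\sum_{d(x,y)\leq k<2d(x,y)} k/V(x,k)$ and is therefore absorbed into the main term. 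The matching lower bound of \eqref{bdg} comes from the same computation restricted to $n\geq 2d(x,y)^2$, where the Gaussian factor is bounded below. The principal obstacle is thus the loop-free heat kernel estimate; everything else is a standard computation.
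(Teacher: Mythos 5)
Your overall strategy --- average consecutive kernels to kill the parity obstruction, run the Delmotte-type machinery on the averaged object, then sum over time --- is the same in spirit as the paper's, but two of your intermediate claims fail as stated, and they are exactly where the real work lies. The ``first task'' you set yourself, a two-sided Gaussian bound on $p_n(x,y)$ itself for all $n\geq d(x,y)$, is false under the stated hypotheses: $\mathbb{Z}^d$ with standard weights satisfies (VD), (PI) and $(P_0)$ but is bipartite, so $p_n(x,y)=0$ whenever $n-d(x,y)$ is odd, and no lower bound of the form $c\,V(x,\sqrt n)^{-1}e^{-Cd(x,y)^2/n}$ can hold. Your proposed transfer from $q_n$ back to $p_n$ cannot repair this: the identity $p_{n+1}(x,y)=\sum_{y_0\sim y}P(y,y_0)\,p_n(x,y_0)$ together with $(P_0)$ compares $p_{n+1}(x,y)$ with $p_n$ at a \emph{neighbour} of $y$, never with $p_n(x,y)$ itself (that step needs a loop at $y$), so a lower bound on $p_n+p_{n+1}$ gives no information about either summand separately. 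For the Green function this would not matter if you summed the bounds for $q_n$ directly (since $\sum_n q_n=g-\tfrac12 p_0$), but as written your summation invokes the pointwise lower bound on $p_n$.

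The second gap is that $q_n=\tfrac12(p_n+p_{n+1})$ is the kernel of $P^n\cdot\tfrac12(I+P)$, which is not the $n$-th iterate of any Markov operator; the Nash/Davies/chaining and parabolic Harnack arguments you want to ``pass verbatim'' all rely on the semigroup property $q_{n+m}(x,y)=\sum_z q_n(x,z)q_m(z,y)\mu(z)$, which $q_n$ does not satisfy. To make the averaging idea rigorous one must replace $P$ by an honest Markov operator possessing loops. The paper takes $\hat P=\tfrac12(P+P^2)$, i.e.\ $\hat\mu_{xy}=\tfrac12\mu_{xy}+\tfrac12\sum_z\mu_{xz}\mu_{zy}/\mu(z)$, and then the two nontrivial steps are precisely the ones absent from your proposal: (i) verifying that $(V,\hat\mu)$ inherits $(\Delta)$, (VD) and, more delicately, (PI) (Proposition \ref{Prop-equiv}; the difficulty of transferring (PI) is the stated reason for not simply using $P^2$), and (ii) proving $\tfrac12 g\le\hat g\le g$ via the binomial identity $\hat P_n=2^{-n}\sum_{m=0}^{n}\binom{n}{m}P_{n+m}$ and a careful estimate of the resulting coefficients (Lemma \ref{gg-rel}). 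Only then does Delmotte's theorem apply, to $(V,\hat\mu)$. Your final summation converting the Gaussian bounds into \eqref{bdg} is essentially the paper's computation and is fine, modulo the extra bookkeeping between $d$ and $\hat d$.
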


\begin{theorem}\label{vol-opt}\rm 
Let $1<p<\infty$, assume that conditions \hyperref[VD]{$(\text{VD})$}, \hyperref[PI]{$(\text{PI})$}, and \hyperref[p0]{$(P_{0})$}
 are satisfied on weighted graph $(V,\mu)$. If there exists some $o\in V$ such that
\begin{equation}
V(o,r)\lesssim r^{2p}(\log{r})^{p-1},\quad\mbox{for all large enough $r$},
\end{equation}
	then $(V,\mu)$ is $L^p$\text{-}parabolic.
\end{theorem}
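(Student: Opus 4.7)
I would prove $L^p$-parabolicity directly from Definition \ref{def-lp-para}. Fix a finite set $K \subset V$ and any non-trivial $f \in \ell^+(K)$; picking $y_0 \in K$ with $f(y_0) > 0$, the pointwise bound
\[
Gf(x) = \sum_{y \in K} g(x,y) f(y) \mu(y) \;\geq\; f(y_0)\, \mu(y_0)\, g(x, y_0)
\]
shows that the admissibility constraint $\|Gf\|_{L^q(V)} \leq 1$ is incompatible with $\|g(\cdot, y_0)\|_{L^q(V)} = \infty$. Hence it suffices to prove
\[
\sum_{x \in V} g(x, y_0)^q \, \mu(x) = \infty \qquad \text{for every } y_0 \in V,
\]
which forces the only admissible $f$ in the definition of $C_p(K)$ to be $f \equiv 0$ and yields $C_p(K) = 0$.

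For the divergence, Theorem \ref{GF-est} combined with \hyperref[VD]{(VD)} (which gives $V(x, n) \simeq V(y_0, n)$ whenever $n \geq d(x, y_0)$) furnishes $g(x, y_0) \simeq \psi(d(x,y_0))$, where $\psi(r) := \sum_{n \geq r} n / V(y_0, n)$. Grouping vertices by their distance to $y_0$ and summing by parts (using that $\psi$ decreases to $0$) gives
\[
\sum_{x \in V} g(x, y_0)^q \mu(x) \;\gtrsim\; \sum_{r \geq 0} V(y_0, r)\bigl(\psi(r)^q - \psi(r+1)^q\bigr).
\]
Convexity of $t \mapsto t^q$ (valid since $q > 1$) then yields
\[
\psi(r)^q - \psi(r+1)^q \;\geq\; q\, \psi(r+1)^{q-1}\bigl(\psi(r) - \psi(r+1)\bigr) \;=\; q\, \psi(r+1)^{q-1}\cdot \frac{r}{V(y_0, r)}.
\]

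The last ingredient is the lower bound on $\psi$ coming from the volume hypothesis, transferred to each $y_0$ via \hyperref[VD]{(VD)}. From $V(y_0, n) \lesssim n^{2p}(\log n)^{p-1}$ an integral comparison gives
\[
\psi(r) \;\gtrsim\; \sum_{n \geq r} \frac{1}{n^{2p-1}(\log n)^{p-1}} \;\simeq\; \frac{1}{r^{2(p-1)}(\log r)^{p-1}},
\]
and the H\"older identity $(p-1)(q-1) = 1$ then yields $\psi(r+1)^{q-1} \gtrsim 1/(r^2 \log r)$. Plugging back, the factors of $V(y_0, r)$ cancel and the $r$-th summand is bounded below by a constant multiple of $1/(r \log r)$, whose sum diverges.

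The main obstacle, and the source of the sharpness, is the delicate arithmetic of the exponents: the $(\log r)^{p-1}$ factor in the volume propagates to $\psi^{q-1}$ as $(\log r)^{-(p-1)(q-1)} = (\log r)^{-1}$, which conspires with the remaining powers of $r$ to produce precisely the borderline harmonic series $\sum 1/(r \log r)$. A faster volume growth would tip this balance and destroy the divergence, so the exponent $p-1$ on the logarithm is the optimal one this method can reach.
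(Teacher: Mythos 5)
Your proposal is correct and follows essentially the same route as the paper, which chains the two-sided Green function estimate of Theorem \ref{GF-est} through Theorems \ref{tvcon2} and \ref{vcond}: radial grouping, Abel summation with the convexity bound $\psi(r)^q-\psi(r+1)^q\geq q\,\psi(r+1)^{q-1}(\psi(r)-\psi(r+1))$, and the exponent arithmetic $(p-1)(q-1)=1$ producing the divergent series $\sum 1/(r\log r)$. The only (harmless) deviation is that you verify $C_p(K)=0$ directly from the definition via $Gf(x)\geq f(y_0)\mu(y_0)g(x,y_0)$ instead of invoking the equivalence (a)$\Leftrightarrow$(d) of Theorem \ref{theo1}, at the mild cost of needing $\|g(\cdot,y_0)\|_{L^q(V)}=\infty$ for every $y_0\in K$ rather than a single point, which you correctly handle by transferring the volume bound via \hyperref[VD]{$(\text{VD})$}.
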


\vspace*{4pt}\noindent	\textbf{Notations.}
The letters $c, c^{\prime}, C, C^{\prime}, C_1, \cdots$ are used to denote positive constants which are independent of the variables in question,
but may vary at different occurrences.
The symbol $f\lesssim g$ (resp., $f\gtrsim g$) means that  $f\leq Cg$ (resp., $f\geq Cg$) for a positive constant $C$ independent of the main
parameters involved.
  $f\simeq g$ means both $f\lesssim g$ and $f\gtrsim g$ hold.

\section{Preliminary}\label{sec2}\rm

Fix a subset $U\subset V$, let us introduce the transition probability for the process $\{X_n\}$ killed on exiting from $U$
\begin{equation*}
	\begin{split}
		P^U_n(x,y)=\mathbb{P}[X_0=x, X_n=y, n<\tau_U],
	\end{split}
\end{equation*}
where $\tau_U=\min \{ n \geq 0 : X_n \notin U \}$  is the first exit time from $U$, see \cite[Section 1.5]{MBbook}.

Then the heat kernel is given by
$$p^U_n(x,y)=\frac{1}{\mu(y)}P^U_n(x,y).$$
Clearly,
\begin{equation*}
	\left\{
	\begin{array}{lr}
	P^V_n(x,y)=P_n(x,y),	\\
	p^U_n(x,y)=p^U_n(y,x),   \\
    p^U_n(x,y)=0,\quad\mbox{ if $x\in U^c$ or $y\in U^c$}.
	\end{array}
	\right.
\end{equation*}	

Introduce the following operators:
\begin{equation*}
	\left\{
	\begin{array}{lr}
		I_{U}f(x)= \mathbf{1}_{U}(x)f(x),\\
        P^{U}f(x)=\sum\limits_{y\in U}P_{1}^{U}(x,y)f(y),\\
	    \Delta_{U}f(x)=\left(P^{U}-I_{U}\right)f(x),\\
        P_{n}^{U}f(x)=\sum\limits_{y\in U}P_{n}^{U}(x,y)f(y),
	\end{array}
	\right.
\end{equation*}	
where $\mathbf{1}_{U}(x) = 1$ when $x \in U$ and $\mathbf{1}_{U}(x) = 0$ otherwise.  From the above definitions, we can see that $P^{U} f(x) = P_{1}^{U} f(x)$.


The Green's function of \(\Delta_U\) on \(U\) is defined as
$$g^U(x, y) = \sum\limits_{n=0}^\infty p^U_n(x, y),$$
Consequently, the following properties hold:
$$g^V(x, y) = g(x, y) \quad \text{and} \quad g^U(x, y) = g^U(y, x).$$
Furthermore, it is known that
$$-\Delta g^U(x, x_0) = \frac{1}{\mu(x_0)} \mathbf{1}_{\{x_0\}}(x) \quad \text{for } x, x_0 \in U.$$

The following result  is known in the existing literatures, for example, see \cite[Theorem 1.31]{MBbook}.
\begin{proposition}\rm\label{gu-lemma}
	The local Green function
$g^U(x,y)<\infty$ for any $x, y \in U$, provided that either of the following case holds
\begin{enumerate}
	\item[(i).] $(V,\mu)$ is transient. \quad (ii). $U\neq V$.
\end{enumerate}
\end{proposition}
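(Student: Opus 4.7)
The plan is to extract both cases from the probabilistic identity $g^U(x,y)=\mathbb{E}_x[L_y]/\mu(y)$, where $L_y=\sum_{n=0}^{\infty}\mathbf{1}_{\{X_n=y,\,n<\tau_U\}}$ counts the visits of the walk to $y$ before leaving $U$. Finiteness of $g^U(x,y)$ is thus equivalent to $\mathbb{E}_x[L_y]<\infty$.

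Case (i) is almost immediate. The walk killed on exiting $U$ is stochastically dominated by the free walk, so $P^U_n(x,y)\leq P_n(x,y)$, and summing in $n$ gives $g^U(x,y)\leq g(x,y)$. Transience of $(V,\mu)$ is precisely the statement $g(x,y)<\infty$, so the inequality closes the argument.

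For case (ii), I would apply the strong Markov property at the first hitting time $T_y:=\min\{n\geq 0:X_n=y\}$ to obtain the standard decomposition $g^U(x,y)=\mathbb{P}_x[T_y<\tau_U]\,g^U(y,y)$, which reduces the claim to showing $g^U(y,y)<\infty$ for every $y\in U$. Starting from $y$, the excursions that return to $y$ before $\tau_U$ are i.i.d., so the return count is geometric and $\mathbb{E}_y[L_y]=1/(1-p)$, where $p:=\mathbb{P}_y[T_y^+<\tau_U]$ and $T_y^+:=\min\{n\geq 1:X_n=y\}$. Thus everything reduces to verifying $p<1$.

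Here the hypothesis $U\neq V$ enters. Since the graph is connected and $U^c$ is non-empty, pick a shortest path $y=y_0,y_1,\ldots,y_N=z$ with $z\in U^c$; by minimality the interior vertices differ from $y$. Because $P(y_i,y_{i+1})>0$ along each edge, the event that the walk follows this prescribed path in its first $N$ steps has strictly positive probability, and on that event $\tau_U\leq N<T_y^+$, forcing $1-p>0$. The main obstacle is purely bookkeeping: the strong-Markov reduction and the geometric-excursion identity are standard, so the real content is the elementary path-to-boundary construction that produces $p<1$ whenever $U\neq V$.
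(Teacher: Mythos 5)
Your argument is correct: case (i) via $P^U_n\le P_n$ and case (ii) via the strong Markov decomposition $g^U(x,y)=\mathbb{P}_x[T_y<\tau_U]\,g^U(y,y)$, the geometric excursion count, and the positive-probability path from $y$ to $U^c$ forcing $\mathbb{P}_y[T_y^+<\tau_U]<1$. The paper gives no proof of this proposition, citing it as \cite[Theorem 1.31]{MBbook}; your argument is exactly the standard one behind that reference, so there is nothing to reconcile.
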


For $u \in \ell^+(V)$, define the local Green operator $G^U$ by
\begin{align*}
	G^U u(x) &= \sum_{y \in U} g^U(x,y) u(y) \mu(y),
\end{align*}
where $G^U u$ is allowed to take the value $+\infty$. The domain of $G^{U}$ can be further extended to
 \[ D_G(U):= \{ u \in \ell(U) : \sum_{y \in U} g^{U}(x,y) |u(y)| \mu(y) < \infty \text{ for all } x \in U \}. \]
Since $g^U(x,y) = 0$ for $x \in U^c$, it follows that $G^U u(x) = 0$ for all $x \in U^c$.


\begin{definition}\rm
For $1<q<\infty$, the $L^q$\text{-}Green function $g_q(x,y)$ is defined as
\begin{equation}\label{gq}
g_{q}(x,y)=\sum\limits_{z\in V}g(x,z)g(z,y)^{q-1}\mu(z).
\end{equation}
\end{definition}
It follows that if $g(x,y)\equiv\infty$, then $g_q(x,y)\equiv\infty$.
From the Markov property of random walk, we can deduce the following properties (for details, see \cite[Section 1.5 and 1.6]{MBbook}).
\begin{proposition}\rm\label{pro2-2}\rm
	For $n\in \mathbb{N_{+}}$, we have
	\begin{enumerate}
		\item[(i)]{$P_{n+1}^U(x,y)=\sum\limits_{z\in U}P_1^U(x,z)P_n^U(z,y)$.}
		\item[(ii)]{$P_{0}^{U}=I_{U}$, and $P^{U}_n=(P^{U})^{n}$.}
		\item[(iii)]{
For $u\in D_G(U)$, $G^{U}u(x)=\sum\limits_{n=0}^{\infty}P_n^{U}u(x).$}
	\end{enumerate}
	
\end{proposition}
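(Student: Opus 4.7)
The three statements in Proposition \ref{pro2-2} are all standard Markov-property computations for the killed walk, so I would handle them in the natural order.

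For (i), I would simply unpack the definition. Writing $P_{n+1}^{U}(x,y)=\mathbb{P}_{x}[X_{n+1}=y,\,n+1<\tau_{U}]$ and noting that $n+1<\tau_{U}$ is equivalent to $X_{0},X_{1},\dots,X_{n+1}\in U$, I would condition on $X_{1}$: the event forces $X_{1}=z\in U$, and the strong Markov property at time $1$ collapses the remaining probability into $\mathbb{P}_{z}[X_{n}=y,\,n<\tau_{U}]=P_{n}^{U}(z,y)$. Summing over $z\in U$ gives the claimed Chapman–Kolmogorov identity. The only subtle point, which I would flag briefly, is that $\tau_{U}$ is defined using $X_{0}$ as well, so one must verify that the term $\mathbf{1}_{\{X_{0}\in U\}}$ behaves correctly; this is automatic because $P_{n+1}^{U}(x,y)=0$ when $x\notin U$.

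For (ii), the identity $P_{0}^{U}=I_{U}$ follows directly from the definition: $P_{0}^{U}(x,y)=\mathbb{P}[X_{0}=x,\,X_{0}=y,\,0<\tau_{U}]=\mathbf{1}_{U}(x)\delta_{x}(y)$, which as an operator acts by $f\mapsto\mathbf{1}_{U}f=I_{U}f$. The second statement I would prove by induction on $n$. For $n=1$ it is the definition of $P^{U}$. Assuming $P_{n}^{U}=(P^{U})^{n}$, apply (i) in the operator form
\begin{equation*}
P_{n+1}^{U}f(x)=\sum_{y\in U}P_{n+1}^{U}(x,y)f(y)=\sum_{z\in U}P_{1}^{U}(x,z)\Bigl(\sum_{y\in U}P_{n}^{U}(z,y)f(y)\Bigr)=P^{U}\bigl(P_{n}^{U}f\bigr)(x),
\end{equation*}
which by the induction hypothesis equals $P^{U}(P^{U})^{n}f(x)=(P^{U})^{n+1}f(x)$.

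For (iii), I would use Fubini. Writing
\begin{equation*}
G^{U}u(x)=\sum_{y\in U}g^{U}(x,y)u(y)\mu(y)=\sum_{y\in U}\sum_{n=0}^{\infty}\frac{P_{n}^{U}(x,y)}{\mu(y)}u(y)\mu(y),
\end{equation*}
the hypothesis $u\in D_{G}(U)$ ensures absolute convergence of the double sum: $\sum_{n,y}P_{n}^{U}(x,y)|u(y)|=\sum_{y}g^{U}(x,y)|u(y)|\mu(y)<\infty$. Hence the order of summation can be interchanged, giving $\sum_{n=0}^{\infty}\sum_{y\in U}P_{n}^{U}(x,y)u(y)=\sum_{n=0}^{\infty}P_{n}^{U}u(x)$ as required. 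There is no real obstacle here; the only thing to be careful about is distinguishing the kernel $P_{n}^{U}(x,y)$ (which involves $\mu(y)$ implicitly through $p_{n}^{U}$) from the heat kernel $p_{n}^{U}(x,y)$, but the definitions given in the excerpt make this transparent.
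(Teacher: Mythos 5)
Your proof is correct, and it follows exactly the route the paper itself indicates: the paper states Proposition \ref{pro2-2} without proof, remarking only that it follows ``from the Markov property of random walk'' and citing Barlow's book, and your argument supplies precisely those standard details (Markov property at time $1$ for (i), induction for (ii), and Tonelli/Fubini justified by $u\in D_G(U)$ for (iii)). The only cosmetic quibble is that in (i) the ordinary Markov property at the deterministic time $1$ suffices; invoking the strong Markov property is unnecessary.
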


Using these properties, we obtain the following lemmas.
\begin{lemma}\label{lem1}\rm
If $g_{q}(x_0,y_0)<\infty$
for some $(x_0, y_0)\in V\times V$, then for any $(x, y)\in V\times V$,
	$$g_{q}(x,y)<\infty.$$
\end{lemma}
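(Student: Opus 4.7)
The plan is to use the Dynkin-type identity $Gf = f + P(Gf)$ for the Green operator, which in pointwise form reads $Gf(x) = f(x) + \sum_{z} P(x, z)\, Gf(z)$ for any $f \in \ell^+(V)$. Dropping the non-negative term $f(x)$ and keeping only a single neighbor $x' \sim x$ in the sum yields the one-step comparison
\[
Gf(x') \leq \frac{1}{P(x, x')}\, Gf(x) = \frac{\mu(x)}{\mu_{xx'}}\, Gf(x).
\]
Before applying this, I would observe that the hypothesis $g_q(x_0, y_0) < \infty$ forces the $z = x_0$ term $g(x_0, x_0)\, g(x_0, y_0)^{q-1} \mu(x_0)$ in the definition of $g_q$ to be finite, so $g(x_0, x_0) < \infty$; this is the classical characterization of transience and guarantees $g(u, v) < \infty$ for every $u, v \in V$.

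The first application is with $f = g(\cdot, y_0)^{q-1}$, for which $Gf(x) = g_q(x, y_0)$. Iterating the above comparison along a path from $x_0$ to an arbitrary $x$ (which exists by connectedness of $(V, \mu)$) gives $g_q(x, y_0) < \infty$ for every $x \in V$. The second step handles the other coordinate: applying the same comparison to $f_z(u) = \mathbf{1}_{\{z\}}(u)/\mu(z)$, whose Green transform is $Gf_z(u) = g(u, z)$, and then using reversibility $g(y, z) = g(z, y)$, yields for any $y \sim y'$ and any $z \neq y$
\[
g(z, y') = g(y', z) \leq \frac{g(y, z)}{P(y, y')}.
\]
Iterating along a path $y_0 = w_0 \sim w_1 \sim \cdots \sim w_\ell = y$ produces a finite set $F := \{w_0, \dots, w_{\ell-1}\}$ and a constant $C > 0$ such that $g(z, y) \leq C\, g(z, y_0)$ for every $z \in V \setminus F$.

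To conclude, I would split
\[
g_q(x, y) \;=\; \sum_{z \in F} g(x, z)\, g(z, y)^{q-1}\, \mu(z) \;+\; \sum_{z \notin F} g(x, z)\, g(z, y)^{q-1}\, \mu(z).
\]
The first piece is a finite sum of finite numbers (by transience), hence finite, while the second is bounded above by $C^{q-1}\, g_q(x, y_0)$, which is finite by the first step. This gives $g_q(x, y) < \infty$. The main obstacle is really just bookkeeping: each application of the one-step comparison requires the base point to differ from the second argument, which is why the finite exceptional set $F$ must be isolated before invoking the pointwise bound over the rest of $V$.
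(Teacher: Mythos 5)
Your proof is correct, and it rests on the same underlying mechanism as the paper's: the chain (Harnack-type) inequality for the Green function along paths, which in your formulation is $Gf(x)\geq P(x,x')Gf(x')$ from $Gf=f+P(Gf)$, and in the paper's is the $n$-step version $g(x,y)\geq P_n(x,z)g(z,y)$ obtained by summing the semigroup identity. The difference is organizational: the paper applies the inequality to \emph{both} arguments of $g_q$ simultaneously, obtaining in one line $g_q(x_0,y_0)\geq P_i(x_0,x)\,P_j(y_0,y)^{q-1}\,g_q(x,y)$ with $P_i(x_0,x),P_j(y_0,y)>0$ by connectedness, which makes your preliminary transience observation and your splitting of the sum over $F$ and $V\setminus F$ unnecessary. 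Note also that your stated obstacle is not actually there: dropping the term $f(x)\geq 0$ in $Gf(x)=f(x)+\sum_z P(x,z)Gf(z)$ is valid whether or not $f$ vanishes at $x$, so the one-step comparison $g(y',z)\leq g(y,z)/P(y,y')$ holds for all $z$, including $z\in\{w_0,\dots,w_{\ell-1}\}$; the exceptional set $F$ can simply be deleted and the estimate $g(z,y)\leq C\,g(z,y_0)$ used over all of $V$. With that simplification your two-stage argument collapses essentially into the paper's one-shot inequality.
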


	\begin{proof}
		Since $(V, \mu)$ is connected, then for any $x, y$,  there exist some non-negative integer $i$ and $j$ such that
		\begin{align*}
			P_i(x_0, x)>0,\quad P_j(y_0,y)>0.
		\end{align*}	
Recalling
  $$P_{n+m}(x,y)=\sum\limits_{z\in V}P_n(x,z)P_m(z,y),$$
 and taking the sum over $m$
$$\sum_{m=0}^\infty P_{n+m}(x,y)=\sum_{m=0}^\infty\sum\limits_{z\in V}P_n(x,z)P_m(z,y)=\sum\limits_{z\in V}P_n(x,z)\sum_{m=0}^\infty P_m(z,y),$$
 and by noting that
 $$g(x,y)\mu(y)=\sum\limits_{n=0}^{\infty}P_n(x,y),$$
 we have for all $x,y, z\in V$, and all $n\in\mathbb{N}$ that
		$$g(x,y)\geq P_n(x,z)g(z,y).$$
Specially, by using that
\begin{equation*}
	\left\{
	\begin{array}{lr}
		g(x_0,z)\geq P_i(x_0,x)g(x,z),\nonumber\\
	g(y_0,z)\geq P_j(y_0,y)g(z,y),
	\end{array}
	\right.
\end{equation*}
we obtain
		\begin{equation*}
			\begin{split}
				g_{q}(x_0,y_0) &=\sum\limits_{z\in V}g(x_0,z)g(z,y_0)^{q-1}\mu(z)\\
				&\geq P_i(x_0,x)P_j(y_0,y)^{q-1}\sum\limits_{z\in V}g(x, z)g(z,y)^{q-1}\mu(z)\\
				&=P_i(x,x_0)P_j(y_0,y)^{q-1}g_{q}(x,y).
			\end{split}
		\end{equation*}
		Since $P_i(x,x_0)$ and $P_j(y,y_0)$ are both positive, thus we can finish the proof.
	\end{proof}
	
	\begin{lemma}\label{lem2}\rm
Let $(V,\mu)$ be transient or $U\neq V$. For $f\in \ell^{+}(V)$, if $G^Uf(x)<\infty$, then
		\begin{equation}\label{eq_lem5}
			- \Delta_U (G^Uf(x))= f(x), \quad\mbox{for all $x\in U$}.
		\end{equation}
	\end{lemma}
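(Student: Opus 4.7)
The plan is to prove the lemma by a direct telescoping computation, using Proposition \ref{pro2-2} to write $G^U f$ as a power series in the killed Markov operator $P^U$ and then exploiting the algebraic identity $-\Delta_U = I_U - P^U$.

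First, by Proposition \ref{pro2-2}(iii) and the hypothesis $f\in \ell^+(V)$, I write
\[
G^U f(x) \;=\; \sum_{n=0}^{\infty} P_n^U f(x) \;=\; \sum_{n=0}^{\infty} (P^U)^n f(x),
\]
with the finiteness of this sum guaranteed by the assumption $G^U f(x)<\infty$ (which, as in Lemma \ref{lem1}, propagates to every $x\in U$; this is ensured also by Proposition \ref{gu-lemma} when $(V,\mu)$ is transient or $U\neq V$). Since the series converges, its general term satisfies $P_n^U f(x)\to 0$ as $n\to\infty$, a fact I will use at the last step.

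Next I compute $-\Delta_U G^U f(x)=(I_U-P^U)G^U f(x)$ for $x\in U$. The point requiring care is the interchange of $P^U$ with the infinite sum: I would justify
\[
P^U G^U f(x) \;=\; \sum_{y\in U} P_1^U(x,y)\sum_{n=0}^{\infty} P_n^U f(y)
\;=\; \sum_{n=0}^{\infty}\sum_{y\in U} P_1^U(x,y) P_n^U f(y)
\;=\; \sum_{n=0}^{\infty} P_{n+1}^U f(x)
\]
by Tonelli's theorem, which applies because $f\geq 0$ and all terms $P_1^U(x,y)P_n^U f(y)$ are non-negative. After the interchange, the formula collapses to a telescoping difference.

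Combining the two displays, for $x\in U$ I get
\[
-\Delta_U G^U f(x) \;=\; \sum_{n=0}^{\infty} P_n^U f(x) - \sum_{n=1}^{\infty} P_n^U f(x) \;=\; P_0^U f(x) \;=\; I_U f(x) \;=\; f(x),
\]
which is exactly \eqref{eq_lem5}. I do not expect any serious obstacle: the only step that is not purely formal is the Fubini/Tonelli interchange, and this is immediate from non-negativity. For $x\in U^c$ the identity is trivial since $G^U f$ vanishes there and $-\Delta_U$ is only evaluated on $U$.
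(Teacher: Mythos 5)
Your proposal is correct and follows essentially the same route as the paper: expand $G^Uf=\sum_{n\ge 0}P_n^Uf$, commute $P^U$ past the sum to get $\sum_{n\ge 0}P_{n+1}^Uf=G^Uf-I_Uf$, and conclude $-\Delta_U G^Uf=I_Uf$. The only cosmetic difference is that you justify the interchange by Tonelli (non-negativity) while the paper invokes local finiteness; both are adequate, and your remark that $P_n^Uf(x)\to 0$ is not actually needed since the telescoping is just a difference of two convergent series.
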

	\begin{proof}
		Using local finiteness of graph, we obtain
		\begin{equation*}
				P^UG^Uf=P^U\sum\limits_{n=0}^{\infty} P^U_nf=\sum\limits_{n=0}^{\infty} P^U_{n+1}f=G^Uf-I_Uf,
		\end{equation*}
		whence $-\Delta_U (G^Uf(x))=(I_U-P^U) G^Uf(x)=I_Uf(x)$, the proof is complete.
	\end{proof}


\section{$L^P$-capacity}
Throughout this section, since we need to use local Green function $g^U(\cdot,\cdot)$ to define $L^p$-capacity, hence
we emphasize here that $(V,\mu)$ is transient or $U\neq V$, see Proposition \ref{gu-lemma}.
\begin{definition}\label{lpcap}\rm
Fix any finite subset $K\subset U$ and $1\leq p< \infty$, define the $L^p$-capacity of $(K, U)$  by
     \begin{equation}
      	C_p(K,U)=\text{sup}\{\nu(K)^p: \nu=f\mu, f\in \ell^{+}(K), \left\Vert G^{U}f\right\Vert_{L^{q}(U)}\leq 1\},
     \end{equation}
     where $q$ is the H\"{o}lder conjugate exponent of $p$. When $U=V$, we denote by $C_p(K):=C_p(K, V)$ for simplicity, see (\ref{CpK}).
\end{definition}
\begin{remark}\rm
From the definition of $C_p(K, U)$, it is easy to verify that
\begin{align}
C_p(K,U)=\text{sup}\{\nu(K)^p: \nu=f\mu, f\in \ell^{+}(K), \left\Vert G^{U}f\right\Vert_{L^{q}(U)}=1\},
\end{align}
Now we claim that
\begin{align}\label{rec-eq}
\min\limits_{\nu\in \ell^{+}(K)}\frac{\left\Vert G^{U}f\right\Vert_{L^{q}(U)}}{\nu(K)}=\frac{1}{C_p(K, U)^{\frac{1}{p}}}.
\end{align}
If \(C_p(K, U) = 0\), then both sides of \eqref{rec-eq} are infinite,
	and thus the equality holds trivially.
	Hence, we just assume that \(C_p(K, U) > 0\).
First, for any small enough $\epsilon>0$, there exists $f\in \ell^{+}(K)$ such that $\left\Vert G^Uf\right\Vert_{L^q(U)}\leq1$, and $\nu=f\mu$
such that
\begin{align*}
\nu(K)\geq \left(C_p(K, U)-\epsilon\right)^\frac{1}{p},
\end{align*}
which gives that
\begin{align*}
\frac{\left\Vert G^Uf\right\Vert_{L^q(U)}}{\nu(K)}\leq\frac{1}{\left(C_p(K, U)-\epsilon\right)^\frac{1}{p}}.
\end{align*}
It follows that
\begin{align*}
\min_{\nu\in \ell^{+}(K)}\frac{\left\Vert G^Uf\right\Vert_{L^q(U)}}{\nu(K)}\leq\frac{1}{\left(C_p(K, U)-\epsilon\right)^\frac{1}{p}},
\end{align*}
Letting $\epsilon\to0$, we obtain that
\begin{align*}
\min_{\nu\in \ell^{+}(K)}\frac{\left\Vert G^Uf\right\Vert_{L^q(U)}}{\nu(K)}\leq\frac{1}{C_p(K, U)^\frac{1}{p}}.
\end{align*}
On the other hand, for any $\nu=f\mu$ such that $\left\Vert G^Uf\right\Vert_{L^q(U)}=1$ and $f\in \ell^{+}(V)$, we have
\begin{align*}
\frac{1}{C_p(K, U)^{\frac{1}{p}}}\leq \frac{1}{\nu(K)}=\frac{\left\Vert G^Uf\right\Vert_{L^q(U)}}{\nu(K)}.
\end{align*}
Since $\nu$ is arbitrary, we obtain that
\begin{align*}
\frac{1}{C_p(K, U)^{\frac{1}{p}}}\leq\min_{\nu\in\ell^{+}(K)}\frac{\left\Vert G^Uf\right\Vert_{L^q(U)}}{\nu(K)},
\end{align*}
Hence, we obtain that (\ref{rec-eq}).
\end{remark}
\begin{remark}\rm
Indeed, there exist many other capacities in the existing literature, for example, the  harmonic capacity
$\text{Cap}(K,U)$
	$$\text{Cap}(K,U)=\inf\{\sum_{x,y\in U}(f(x)-f(y))^2\mu_{xy}: f\in\ell_0(U), f\geq 1 \;\mbox{on $K$}\}.$$
If $U=V$,  denote
$\text{Cap}(K):=\text{Cap}(K,V).$
It is known that for any finite subset $K\subset U$ with $U$ being finite or $U=V$, we have
	$$\text{Cap}(K,U)=C_1(K,U).$$
see \cite[Proposition 7.9]{MBbook}.
\end{remark}

We introduce two different equivalent characterizations of $C_p(K, U)$ for $1<p<\infty$, which is more convenient to use.
For $1<p<\infty$, and let $K$ be a finite set of $U$, and define the following two capacities
\begin{equation*}
	\hat{C}_p(K,U)=\text{inf} \left\{\left\Vert f\right\Vert_{L^p(U)}^p: f\in L_{+}^p(U), \enspace G^{U}f\geq 1 \quad\mbox{on $K$}\right\}.
\end{equation*}
and
\begin{equation*}
	\bar{C}_p(K,U)=\text{inf} \left\{\left\Vert \Delta f\right\Vert_{L^p(U)}^p: f\in \ell_0(U),\enspace f\geq 1  \quad\mbox{on $K$}\right\}.
\end{equation*}

\begin{theorem}\label{cape}\rm
	For $1<p<\infty$, and let $K$ be a finite subset of $U$, then
\begin{align}\label{eq-cap}
	C_p(K,U)=\hat{C}_p(K,U)=\bar{C}_p(K,U).
\end{align}
\end{theorem}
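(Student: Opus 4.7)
I would prove the two equalities in \eqref{eq-cap} separately: the duality $C_p(K,U)=\hat{C}_p(K,U)$ via a classical H\"older argument enabled by the symmetry $g^U(x,y)=g^U(y,x)$, and the identity $\hat{C}_p(K,U)=\bar{C}_p(K,U)$ by exploiting that $G^U$ and $-\Delta_U$ are mutual inverses on compactly supported functions (Lemma \ref{lem2} together with its converse via a telescoping sum).

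For $C_p\leq\hat{C}_p$, given admissible $f\in\ell^+(K)$ with $\|G^Uf\|_{L^q(U)}\leq 1$ and $h\in L^p_+(U)$ with $G^Uh\geq 1$ on $K$, the symmetry of $g^U$ allows me to swap the orders in the double sum and then apply H\"older's inequality:
\[
\nu(K) \leq \sum_{x\in K} f(x)\,G^Uh(x)\,\mu(x) = \sum_{y\in U} h(y)\,G^Uf(y)\,\mu(y) \leq \|h\|_{L^p(U)}\|G^Uf\|_{L^q(U)} \leq \|h\|_{L^p(U)},
\]
and taking sup over $f$ and inf over $h$ yields the inequality. For the reverse $\hat{C}_p\leq C_p$, since $K$ is finite the admissible set for $C_p$ is compact in $\ell^+(K)\cong\R^{|K|}_{\geq 0}$ whenever $C_p(K,U)$ is finite, so the sup is attained by some $f^*$ with $\|G^Uf^*\|_{L^q(U)}=1$ and $\nu^*(K)=C_p(K,U)^{1/p}$. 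A Lagrange-multiplier (equivalently, H\"older-equality) computation then shows that $h^*:=C_p(K,U)^{1/p}(G^Uf^*)^{q-1}$ is admissible for $\hat{C}_p$, namely $G^Uh^*=1$ on $\operatorname{supp}(f^*)$ and $\geq 1$ on all of $K$, while the identity $\sum_y(G^Uf^*)^q(y)\mu(y)=1$ gives $\|h^*\|^p_{L^p(U)}=C_p(K,U)$.

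For $\hat{C}_p\leq\bar{C}_p$, I would take $f\in\ell_0(U)$ with $f\geq 1$ on $K$ and set $h:=(-\Delta f)^+\in\ell_0(U)\subset L^p_+(U)$. Proposition \ref{pro2-2}(iii) combined with a telescoping sum gives
\[
G^U(-\Delta_Uf) = \sum_{n\geq 0}(P^U)^n(I_U-P^U)f = f - \lim_{n\to\infty}(P^U)^n f,
\]
and the limit vanishes since $f$ has finite support and Proposition \ref{gu-lemma} guarantees $g^U<\infty$, forcing $P^U_n(x,y)\to 0$. Hence $G^U(-\Delta_Uf)=f$ on $U$; positivity of $G^U$ combined with $h\geq -\Delta_Uf$ then yields $G^Uh\geq f\geq 1$ on $K$, so $h$ is admissible for $\hat{C}_p$ with $\|h\|^p_{L^p(U)}\leq\|\Delta f\|^p_{L^p(U)}$, and the infimum gives $\hat{C}_p\leq\bar{C}_p$.

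For the reverse $\bar{C}_p\leq\hat{C}_p$, I would take admissible $h\in L^p_+(U)$ and first reduce to $h\in\ell_0(U)$ by truncating $h$ along a finite exhaustion of $U$ and rescaling by a factor $(1+\epsilon_n)^{-1}\to 1$ to preserve $G^Uh\geq 1$ on the finite set $K$. Then $f:=G^Uh$ satisfies $f\geq 1$ on $K$, $-\Delta f=h$ on $U$, $\|\Delta f\|_{L^p(U)}=\|h\|_{L^p(U)}$, and $f(x)\to 0$ as $x\to\infty$. I would approximate $f$ by the level-set truncations $f_m:=(1-1/m)^{-1}(f-1/m)^+$, supported in the finite set $V_m:=\{f>1/m\}$ and satisfying $f_m\geq 1$ on $K$ for $m$ large. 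A direct pointwise computation yields $\Delta((f-1/m)^+)(x)=-h(x)+E_m(x)$ on $V_m$ with an error $0\leq E_m(x)\leq 1/m$ concentrated on the inner boundary of $V_m$, and $|\Delta((f-1/m)^+)(x)|\leq 1/m$ on the outer boundary (for $m$ large enough that $\operatorname{supp}(h)\subset V_m$). The elementary bound $|a-b|^p\leq a^p+b^p$ for $a,b\geq 0$ then produces
\[
\|\Delta f_m\|^p_{L^p(U)} \leq (1-1/m)^{-p}\Bigl[\|h\|^p_{L^p(U)} + m^{-p}\mu(\partial V_m\cap U)\Bigr].
\]
The main obstacle will be showing that the boundary correction $m^{-p}\mu(\partial V_m\cap U)\to 0$ as $m\to\infty$ (at least along a suitable subsequence). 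I expect this to follow either from a careful choice of exhaustion that moves the boundary into a region where $\mu(\partial V_m)=o(m^p)$, or from passing to a subsequence by exploiting summability inherited from $\|h\|_{L^p(U)}<\infty$; if more regularity is needed, one can invoke the Green function two-sided estimate of Theorem \ref{GF-est} to obtain a Sobolev-type tail decay for $f=G^Uh$.
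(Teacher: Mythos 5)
Your easy duality direction $C_p\leq\hat{C}_p$ and your proof of $\hat{C}_p\leq\bar{C}_p$ (telescoping $G^U(-\Delta_U f)=f$ for $f\in\ell_0(U)$, then testing with $(-\Delta f)^+$) are correct and essentially identical to the paper's argument. The two remaining directions, however, each contain a genuine gap. For $\hat{C}_p\leq C_p$ you assume the supremum defining $C_p(K,U)$ is attained at some $f^*\neq 0$ and extract the equilibrium potential by a KKT computation. This breaks down precisely in the degenerate case $C_p(K,U)=0$, which really occurs (e.g.\ $K=\{x\}$ in $\mathbb{Z}^3$ with $p\geq 3/2$, where $\|g(\cdot,x)\|_{L^q}=\infty$): the maximizer is then $f^*=0$ and your construction produces nothing, yet you still must show $\hat{C}_p(K,U)=0$. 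Even when $C_p(K,U)>0$, the constraint functional $\sum_y(G^Uf)^q\mu(y)$ can be $+\infty$ off a proper face of $\ell^+(K)$, so the first-order conditions only hold on that face and do not directly give $G^Uh^*\geq 1$ on all of $K$. The paper sidesteps all of this by invoking the Mini-Max theorem of Adams--Hedberg for the bilinear form $\mathcal{E}(\nu,u)=\sum_xG^Uu(x)\nu(x)$, which covers the zero-capacity case via \eqref{rec-eq}; if you want to keep your extremal-measure route you must treat $C_p(K,U)=0$ separately (e.g.\ by testing with $h=S_W^{-1}(g^U(\cdot,x))^{q-1}\mathbf{1}_W$ over large finite $W$ and letting $S_W=\sum_{y\in W}g^U(x,y)g^U(y,x)^{q-1}\mu(y)\to\infty$).

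The more serious gap is in $\bar{C}_p\leq\hat{C}_p$, and you have correctly identified where it lies but underestimated it. First, the level sets $V_m=\{G^Uh>1/m\}$ need not be finite: on a transient graph $G^Uh(x)$ does not in general tend to $0$ at infinity (attach a copy of $\mathbb{Z}_{\geq 0}$ to $\mathbb{Z}^3$; the Green function is constant along the recurrent branch), so $f_m$ is not an admissible competitor in $\ell_0(U)$. Second, even where $V_m$ is finite, there is no reason for $m^{-p}\mu(\partial V_m\cap U)\to 0$; nothing in the hypotheses controls the measure of level-set boundaries, and Theorem \ref{GF-est} is not available here since \hyperref[VD]{$(\text{VD})$}, \hyperref[PI]{$(\text{PI})$}, \hyperref[p0]{$(P_0)$} are not assumed in Theorem \ref{cape}. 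The paper's device avoids both problems at once: instead of truncating the potential $G^Uh$ in space, truncate the kernel, i.e.\ take a finite exhaustion $\{U_n\}$ of $U$ and set $u_{n,\epsilon}=(1+\epsilon)G^{U_n}h$. This function is automatically supported in the finite set $U_n$, satisfies $u_{n,\epsilon}\geq 1$ on $K$ for large $n$ because $g^{U_n}\uparrow g^U$ pointwise and $K$ is finite, and by Lemma \ref{lem2} obeys the exact identity $-\Delta_{U_n}u_{n,\epsilon}=(1+\epsilon)h$ on $U_n$, so $\|\Delta u_{n,\epsilon}\|_{L^p(U_n)}^p=(1+\epsilon)^p\|h\|_{L^p(U_n)}^p\leq(1+\epsilon)^p\|h\|_{L^p(U)}^p$ with no boundary correction to estimate. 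I recommend replacing your level-set truncation by this exhaustion argument.
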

\begin{remark}\label{rem-p par}\rm
For $p=1$, the first equality is not valid, but the second equality still holds. For \(1 < p < \infty\), the capacity \(\bar{C}_p(K, V)\) is less than or equal to the \(p\)-capacity \(\text{Cap}_p(K)\) (as defined in \eqref{pcap}). Indeed, we have
	\begin{align*}
		\|\Delta f\|_{L^p(V)}^p
		&= \sum_{x \in V} |\Delta f(x)|^p \mu(x) \\
		&= \sum_{x \in V} \Big|\sum_{y \sim x} \frac{\mu_{xy}}{\mu(x)} (u(y) - u(x)) \Big|^p \mu(x) \\
		&\le \sum_{x \in V} \sum_{y \sim x} \frac{\mu_{xy}}{\mu(x)} |u(y) - u(x)|^p \mu(x) \\
		&= \sum_{x, y \in V} \mu_{xy} |u(y) - u(x)|^p,
	\end{align*}
	where the inequality follows from Jensen's inequality.
	Consequently, if \((V, \mu)\) is \(p\)-parabolic, it is necessarily \(L^p\)-parabolic as well.

\end{remark}
\begin{proof}\rm
	We show the first equality of (\ref{eq-cap}) by applying the similar argument of \cite[Theorem 2.5.1]{AHbook} as in Euclidean spaces.
	Define a bilinear functional $\mathcal{E}(\cdot,\cdot)$ by
	$$\mathcal{E}(\nu,u)=\sum\limits_{x\in U}G^{U}u(x)\nu(x),\quad\mbox{for $(\nu, u)\in X\times Y,$}$$
	where
\begin{align*}
X&=\{\nu:\nu\in \ell^{+}(U),  \nu(K)=1, \mbox{$\nu(x)=0$ when $x\in V\setminus K$ }\},\nonumber\\
 Y&=\{u:u\in L_{+}^p(U), \left\Vert u\right\Vert_{L^{p}(U)}\leq 1\}.
\end{align*}	
	Note that
	$$\sum\limits_{x\in U}G^{U}u(x) \nu(x)=\sum\limits_{x,y\in U} g^U(x,y) u(y) \mu(y)\nu(x)=\sum\limits_{y\in U}G^{U}f(y)u(y)\mu(y),$$
	where  $f(x)=\frac{\nu(x)}{\mu(x)}$.
	Thus,
	$$\sup\limits_{ u\in Y}\mathcal{E}(\nu,u)=\sup\limits_{ u\in Y}\sum\limits_{y\in U}G^{U}f(y)u(y)\mu(y)=\left\Vert G^{U}f\right\Vert_{L^{q}(U)}. $$
	It follows that
	$$\min\limits_{ \nu\in X}\sup\limits_{ u\in Y}\mathcal{E}(\nu,u)=\min\limits_{\nu\in \ell^{+}(K)}\frac{\left\Vert G^{U}f\right\Vert_{L^{q}(U)}}{\nu(K)}=\frac{1}{C_p(K,U)^{\frac{1}{p}}}.$$
	Similarly
	$$\min\limits_{ \nu\in X}\mathcal{E}(\nu,f)=\min\limits_{ \nu\in X}\sum\limits_{x\in U}G^{U}f(x)\nu(x)=\min\limits_{ x\in K}G^{U}f(x),$$
	we derive
	$$\sup\limits_{ f\in Y}\min\limits_{\nu\in X}\mathcal{E}(\nu,f)=\sup\limits_{ f\in L_{+}^p(U)}\frac{\min\limits_{ x\in K}G^{U}f(x)}{\left\Vert f\right\Vert_{L^{p}(U)}}=\frac{1}{\hat{C}_p(K,U)^{\frac{1}{p}}}.$$
	
	Since $X$ and $Y$ are convex, $X$ is a close subset of $\mathbb{R}^{|K|}$, and the function $\mathcal{E}(\nu,f) $ is continuous in $\nu$ for fixed $f$.
Here $|K|$ stands for the number of vertices in $K$.
	By Mini-Max Theorem of \cite[Theorem 2.4.1]{AHbook}, we obtain
$$C_p(K,U)=\hat{C}_p(K,U),$$
which shows the first equality of (\ref{eq-cap}).

	Now let us prove the second equality of (\ref{eq-cap}).
	Let $\{ U_n\}$ be an increasing exhaustion sequence of $U$, for any $f\in L_{+}^p(U)$ with $G^{U}f\geq 1$ on $K$,
	define
$$u_{n,\epsilon}=(1+\epsilon)G^{U_n}f,$$
where $(n, \epsilon)\in  \mathbb{N}\times\mathbb{R}_{+}.$
	
	Noting that for any $x,y \in U$, $g^{U_n}(x,y)$ is monotonically increasing and converges to $g^{U}(x,y )$, and hence we have
	$$G^{U_n}f(x) \uparrow G^{U}f(x), \quad \mbox{for all $x \in K$},$$
	 which implies that $u_{n, \epsilon} \geq 1 $ on $K$ holds for large enough $n$.
	
	By Lemma \ref{lem2}, we have
	\begin{equation*}
		\left\Vert\Delta u_{n,\epsilon}(x)\right\Vert_{L^p(U_n)}^p=(1+\epsilon)^p \sum\limits_{x\in U_n} |\Delta G^{U_n}f(x)|^p\mu(x)=(1+\epsilon)^p\sum\limits_{x\in U_n} |f(x)|^p\mu(x).
	\end{equation*}
		Therefore
	\begin{equation}
		\bar{C}_p(K,U) \leq (1+\epsilon)^p\left\Vert f\right\Vert_{L^p(U_n)}^p.
	\end{equation}
	By letting $n \to \infty$, we obtain
\begin{equation*} \label{dir-1}
\bar{C}_p(K,U)\leq (1+\epsilon)^p\hat{C}_p(K,U).
\end{equation*}
By the arbitrariness of $\epsilon$, we derive that
\begin{equation} \label{dir-1}
\bar{C}_p(K,U)\leq \hat{C}_p(K,U).
\end{equation}
	
	On the other hand, for any $f\in \ell_0(U)$ with $f\geq 1$ on $K$, we have
	\begin{equation} \label{2-1}
		G^{U}(-\Delta_{U})f=\sum\limits_{n=0}^{\infty} P^{U}_n(I_{U}-P^{U})f=\sum\limits_{n=0}^{\infty}\left( P^{U}_{n}f-P^{U}_{n+1}f\right)
	\end{equation}
	Noting that $(V,\mu)$ is transient or satisifies $U\neq V$, and $f\in \ell_0(U)$, we obtain
	$$-\Delta f(x)=-\Delta_{U}f(x) \quad\text{and}\quad \lim\limits_{n\to \infty} P^{U}_{n}f (x)=0, \quad\mbox{for all $x\in U $}.$$
	Hence, (\ref{2-1}) implies that for all $x\in U,$
	\begin{equation*}
		\begin{split}
			G^{U}(-\Delta)f(x)&=G^{U}(-\Delta_{U})f(x)\\
	        &=\lim\limits_{l\to \infty} \sum\limits_{n=0}^{l}
	        \left( P^{U}_{n}f(x)-P^{U}_{n+1}f(x)\right)\\
			&=\lim\limits_{l\to \infty} (f(x) -P^{U}_{l+1}f(x))\\
			&=f(x).
		\end{split}
	\end{equation*}
Finally, letting $u=|\Delta f|$, we obtain that
	$$u\in l^{+}_0(V) \subset L_{+}^p(U), \quad G^{U} u\geq G^{U}(-\Delta)f = f\geq 1 \enspace\text{on K},$$
	and
	$$\left\Vert u\right\Vert_{L^p(U)}^p=\sum\limits_{U}u^p \mu=\sum\limits_{U}|\Delta f|^p \mu=\left\Vert \Delta f\right\Vert_{L^p(U)}^p,$$
	which implies
\begin{equation}\label{dir-2}
\hat{C}_p(K,U)\leq \bar{C}_p(K,U).
\end{equation}
Combining (\ref{dir-1}) and (\ref{dir-2}), we derive $\hat{C}_p(K,U)= \bar{C}_p(K,U)$. Hence, we complete the
proof.
\end{proof}

    \begin{proposition}\label{prop-cap-1}\rm
    	Let $1<p<\infty$, and for finite sets $K_1, K_2$ satisfying $K_1\subset K_2\subset U_1 \subset U_2$. Then
    	\begin{equation}\label{capm}
    		C_{p}(K_1,U_1)\leq C_{p}(K_2,U_1), \quad C_{p}(K_1,U_2)\leq C_{p}(K_1,U_1),
    	\end{equation}
    	and
    	\begin{equation}\label{capu}
    	C_{p}(K_1 \cup K_2,U)\leq C_{p}(K_1,U)+C_{p}(K_2,U).
    	\end{equation}

    \end{proposition}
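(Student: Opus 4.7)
The plan is to work throughout with the equivalent characterization $C_p(K,U)=\hat{C}_p(K,U)$ established in Theorem \ref{cape},
\[
\hat{C}_p(K,U)=\inf\bigl\{\|f\|_{L^p(U)}^p:\ f\in L_{+}^p(U),\ G^U f\ge 1\text{ on }K\bigr\},
\]
since in this form all three assertions become transparent as comparisons of admissible classes combined with a suitable choice of near-minimizers.

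The two monotonicity statements (\ref{capm}) reduce to inclusion of admissible classes. For $C_p(K_1,U_1)\le C_p(K_2,U_1)$ I would simply observe that any $f$ with $G^{U_1}f\ge 1$ on $K_2$ automatically satisfies the same inequality on the smaller set $K_1$, enlarging the admissible class and hence shrinking the infimum. For $C_p(K_1,U_2)\le C_p(K_1,U_1)$ I would take any $f$ admissible for $(K_1,U_1)$, extend it by zero to $\tilde f$ on $U_2$ (which preserves the $L^p$-norm), and appeal to the domain monotonicity $g^{U_1}(x,y)\le g^{U_2}(x,y)$ for $U_1\subset U_2$; the latter follows immediately from $\tau_{U_1}\le\tau_{U_2}$ applied to the probabilistic formula $P^U_n(x,y)=\mathbb{P}_x[X_n=y,\,n<\tau_U]$. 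This gives
\[
G^{U_2}\tilde f(x)=\sum_{y\in U_1}g^{U_2}(x,y)f(y)\mu(y)\ge G^{U_1}f(x)\ge 1\quad\text{on }K_1,
\]
so $\tilde f$ is admissible for $(K_1,U_2)$, and passing to the infimum closes the inequality.

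The subadditivity (\ref{capu}) is the only step with any real subtlety, and I expect it to be the main obstacle. Given $\epsilon>0$ I would pick $f_i\in L^p_+(U)$ admissible for $(K_i,U)$ with $\|f_i\|_{L^p(U)}^p\le C_p(K_i,U)+\epsilon$. The natural guess $f_1+f_2$ yields, via Minkowski, only the weaker $p$-th root version $C_p(K_1\cup K_2,U)^{1/p}\le C_p(K_1,U)^{1/p}+C_p(K_2,U)^{1/p}$, which is strictly weaker than what is claimed when $p>1$. Instead I would use the pointwise maximum $f:=\max(f_1,f_2)\in L^p_+(U)$: positivity of $G^U$ gives $G^U f\ge G^U f_i\ge 1$ on $K_i$, so $f$ is admissible for $K_1\cup K_2$, while the elementary pointwise bound $\max(a,b)^p\le a^p+b^p$ for $a,b\ge 0$ and $p\ge 1$ delivers $\|f\|_{L^p(U)}^p\le\|f_1\|_{L^p(U)}^p+\|f_2\|_{L^p(U)}^p$. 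Combining the two and letting $\epsilon\to 0$ yields (\ref{capu}). The conceptual content is precisely this replacement of sum by maximum, which promotes the trivial $p$-th root subadditivity into the genuine $p$-th power bound required.
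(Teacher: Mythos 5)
Your proposal is correct and its core argument for \eqref{capu} — choosing near-minimizers for the $\hat{C}_p$ characterization and passing to the pointwise maximum $\max(f_1,f_2)$ together with $\max(a,b)^p\le a^p+b^p$ — is exactly the paper's proof. The only (immaterial) difference is that the paper obtains the monotonicity \eqref{capm} directly from the supremum definition of $C_p(K,U)$, whereas you route it through $\hat{C}_p$ and the domain monotonicity $g^{U_1}\le g^{U_2}$; both are one-line observations.
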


    \begin{proof}
    	The first estimate	(\ref{capm}) can be derived by the definition of $C_p(K, U)$. The second estimate (\ref{capu}) can be derived by the definition of $\hat{C}_p(K,U)$
    and Theorem \ref{cape}. Indeed,
    	fix $\epsilon>0$, we can choose $f_i\in L_{+}^p(U)$ such that $ G^{U}f_i\geq 1$ on $K_i$ and $\left\Vert f_i\right\Vert_{L^p(U)}^p\leq C_{p}(K_i,U)+\frac{\epsilon}{2}$ for  $i=1, 2$. Let us define $f(x)=\max\{f_1,f_2\}$, obviously, $G^Uf\geq1$ on $K_1\cup K_2$, and
    	$$\hat{C}_p(K_1\cup K_2, U)\leq \sum\limits_{U} f^p \mu\leq \sum\limits_{U} f_1^p \mu +\sum\limits_{U} f_2^p \mu\leq C_{p}(K_1,U)+C_{p}(K_2,U) +\epsilon.$$
    	Letting $\epsilon \to 0$, we obtain
    $$\hat{C}_p(K_1\cup K_2, U)\leq C_{p}(K_1,U)+C_{p}(K_2,U),$$
     Thus, we complete the proof.
    \end{proof}

    \begin{proposition}\label{prop-cap-2}\rm
	     Let $\{ U_n\}$ be an increasing exhaustion of $U$ and $K$ be a finite set of $U$. Then, for any $1<p<\infty$,
	     \begin{equation*}
		     \lim\limits_{n\to \infty} C_p(K,U_n) =C_p(K,U).
	     \end{equation*}
     \end{proposition}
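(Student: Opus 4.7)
The approach is to transfer the question to the equivalent characterization $\hat{C}_p(K,U)$ supplied by Theorem \ref{cape} and then exploit the monotone convergence of the local Green functions $g^{U_n}\uparrow g^U$, which was already invoked in the proof of that theorem.

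First I would establish one direction for free: since $K$ is finite and $\{U_n\}$ exhausts $U$, we have $K\subset U_n\subset U$ for all sufficiently large $n$. Two applications of Proposition \ref{prop-cap-1} give $C_p(K,U_{n+1})\leq C_p(K,U_n)$ and $C_p(K,U)\leq C_p(K,U_n)$, so the sequence $\{C_p(K,U_n)\}$ is eventually decreasing and bounded below by $C_p(K,U)$. Consequently
$$\lim_{n\to\infty}C_p(K,U_n)\geq C_p(K,U).$$

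Next I would address the reverse inequality, which is the substantive part. Given $\epsilon>0$, Theorem \ref{cape} lets me pick $f\in L^p_{+}(U)$ with $G^U f\geq 1$ on $K$ and $\|f\|_{L^p(U)}^p\leq\hat{C}_p(K,U)+\epsilon$. The key ingredient is the pointwise monotone convergence $g^{U_n}(x,y)\uparrow g^U(x,y)$, which, combined with $U_n\uparrow U$ and $f\geq 0$, yields $G^{U_n}f(x)\uparrow G^U f(x)$ at every $x\in U$ by monotone convergence. Since $K$ is finite, for any $\delta\in(0,1)$ there exists $N$ such that $G^{U_n}f\geq 1-\delta$ on $K$ for every $n\geq N$. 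Rescaling by $(1-\delta)^{-1}$ and restricting to $U_n$ produces an admissible function for $\hat{C}_p(K,U_n)$, giving
$$\hat{C}_p(K,U_n)\leq\frac{\|f\|_{L^p(U_n)}^p}{(1-\delta)^p}\leq\frac{\hat{C}_p(K,U)+\epsilon}{(1-\delta)^p}\quad\text{for all }n\geq N.$$
Sending $n\to\infty$, then $\delta\to 0^+$, and finally $\epsilon\to 0^+$, and converting $\hat{C}_p$ back to $C_p$ via Theorem \ref{cape}, produces $\limsup_{n\to\infty}C_p(K,U_n)\leq C_p(K,U)$, which together with the previous direction closes the argument.

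There is no serious obstacle here; the one point deserving care is the admissibility step, where the near-minimizer $f$ selected on the larger set $U$ must be turned into an admissible function on the smaller sets $U_n$. This is exactly what the monotone convergence $g^{U_n}\uparrow g^U$ buys, via the representation $g^{U_n}(x,y)=\sum_{k\geq 0}p_k^{U_n}(x,y)$ and the corresponding monotonicity of the killed transition probabilities; no further graph-theoretic input is required.
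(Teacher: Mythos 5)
Your argument is correct and follows essentially the same route as the paper: monotonicity in the domain (Proposition \ref{prop-cap-1}) handles one inequality, and the other is obtained by taking a near-minimizer $f$ for $\hat{C}_p(K,U)$, using the monotone convergence $G^{U_n}f\uparrow G^Uf$ on the finite set $K$, and rescaling to make it admissible for $\hat{C}_p(K,U_n)$. The only cosmetic difference is that the paper multiplies by $(1+\epsilon)$ up front (defining $f_{n,\epsilon}=(1+\epsilon)f\mathbf{1}_{U_n}$) rather than dividing by $(1-\delta)$ afterwards; the two are equivalent.
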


     \begin{proof}
     	By (\ref{capm}), it suffices to show
     	\begin{equation*}
     		\lim\limits_{n\to \infty} C_p(K,U_n)\leq C_p(K,U).
     	\end{equation*}
      Let $f\in L^p_{+}(U)$ and satisfy that $G^{U}f\geq 1$ on $K$.
      Define
      $$f_{n,\epsilon}=(1+\epsilon)f\mathbf{1}_{U_n},$$
      where  $(n,\epsilon)\in \mathbb{N}\times \mathbb{R}_{+}$.

       Since
     	$$\lim\limits_{n\to\infty}G^{U_n}f_{n, \epsilon}(x)=(1+\epsilon) G^{U}f(x)\geq (1+\epsilon), \quad\mbox{for any $x \in K$},$$
     	It follows that $G^{U_n}f_{n,\epsilon} \geq 1 $ on $K$  holds for all large enough $n$.
     	
     	Noting that $f_{n,\epsilon} \in L_{+}^p(U_n)$ for all large enough $n$, we obtain
     	\begin{equation}\label{2-4-1}
     		\hat{C}_p(K,U_n)\leq \left\Vert f_{n,\epsilon}\right\Vert_{L^p(U_n)}^p \leq (1+\epsilon)\left\Vert f\right\Vert_{L^p(U)}^p.
     	\end{equation}
     	By the arbitrariness of $f$, we obtain from (\ref{2-4-1}) that
     	\begin{equation*}
     		\lim\limits_{n\to \infty} \hat{C}_p(K,U_n) \leq (1+\epsilon) \hat{C}_p(K,U).
     	\end{equation*}
     	Letting $\epsilon \to 0$ in the above and by Theorem \ref{cape}, we complete the proof.
     \end{proof}

\section{Proof of Theorem \ref{tm1}}\label{sec4}

For the case $p=1$, note that the system (\ref{equ}) reduces to $-\Delta u \geq 0$, Theorem \ref{tm1} can be derived by  the following well-established equivalent conditions of parabolicity.
\begin{theorem}\rm \cite[Theorems 1.16 and 2.12]{WWbook}\label{tm4-1}
	Let $(V,\mu)$ be an infinite, connected, locally finite graph. The following statements are equivalent.
	\begin{enumerate}
		\item[(1)]{$(V,\mu)$ is parabolic.}
		\item[(2)]{Any non-negative super-harmonic function is constant.}
		\item[(3)]{For some (or, all) $x,y\in V$, $g(x,y)=\infty$.}
		\item[(4)]{ For some (or, every) $x\in V$, $\text{Cap}(\{x\})=0$, where $\text{Cap}(\{x\}):=\text{Cap}(\{x\},V)$}
	\end{enumerate}
\end{theorem}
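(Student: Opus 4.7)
The plan is to establish the equivalences via a circular chain: I would prove (1) $\Leftrightarrow$ (3), (3) $\Rightarrow$ (2), (2) $\Rightarrow$ (3) (by contrapositive), and (1) $\Leftrightarrow$ (4), mixing probabilistic bookkeeping with potential-theoretic identities.

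For (1) $\Leftrightarrow$ (3), I would invoke the classical identity $g(x,x)\mu(x)=\sum_{n\geq 0} P_n(x,x)=\mathbb{E}_x[\#\{n\geq 0:X_n=x\}]=(1-\rho_x)^{-1}$, where $\rho_x=\mathbb{P}_x[\tau_x^{+}<\infty]$ is the return probability and $\tau_x^{+}$ the first return time. Parabolicity is by definition $\rho_x=1$, equivalent to $g(x,x)=\infty$. The ``some pair iff all pairs'' part then follows from the inequality $g(x,y)\geq P_n(x,z)g(z,y)$ already employed in the excerpt (see the proof of Lemma \ref{lem1}), combined with connectedness.

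For (3) $\Rightarrow$ (2), let $u\in\ell^{+}(V)$ be super-harmonic. Then $u(X_n)$ is a non-negative super-martingale under $\mathbb{P}_x$. Recurrence together with irreducibility (which follows from connectedness) gives $\tau_y:=\inf\{n\geq 0:X_n=y\}<\infty$ $\mathbb{P}_x$-a.s.\ for every $y\in V$. Optional stopping applied to the bounded times $\tau_y\wedge n$ together with Fatou's lemma yield $u(x)\geq\liminf_n\mathbb{E}_x[u(X_{\tau_y\wedge n})]\geq\mathbb{E}_x[u(X_{\tau_y})]=u(y)$; swapping the roles of $x$ and $y$ gives $u(x)=u(y)$, whence $u$ is constant. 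For the converse (2) $\Rightarrow$ (3), I argue contrapositively: if $g(x_0,y_0)<\infty$ for some pair then $g<\infty$ everywhere (again by the inequality from Lemma \ref{lem1}), and the function $u(x):=g(x,y_0)$ is non-negative, finite, and super-harmonic since $-\Delta u(x)=\frac{1}{\mu(y_0)}\mathbf{1}_{\{y_0\}}(x)\geq 0$. Because this right-hand side is not identically zero, $u$ cannot be constant, violating (2).

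For (1) $\Leftrightarrow$ (4), I would exhaust $V$ by finite sets $U_n\uparrow V$ and use the explicit identification $\text{Cap}(\{x\},U_n)=\mu(x)/g^{U_n}(x,x)$, the standard equality between the capacity of a singleton and the reciprocal of the Dirichlet Green function (the minimizer of the variational problem for $\text{Cap}(\{x\},U_n)$ is the equilibrium potential $g^{U_n}(\cdot,x)/g^{U_n}(x,x)$). Since $g^{U_n}(x,x)\uparrow g(x,x)$ monotonically and $\text{Cap}(\{x\},U_n)\downarrow\text{Cap}(\{x\})$ (cf.\ the analogous Proposition \ref{prop-cap-2} for $L^p$-capacity), one obtains $\text{Cap}(\{x\})=\mu(x)/g(x,x)$, which vanishes precisely when $g(x,x)=\infty$. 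The main obstacle I anticipate is the identification $\text{Cap}(\{x\},U_n)=\mu(x)/g^{U_n}(x,x)$ together with the correct monotone-convergence statement as $U_n\uparrow V$: this requires solving the Dirichlet variational problem on the finite graph $U_n$ and linking its unique minimizer to $g^{U_n}(\cdot,x)$. Once that potential-theoretic computation is in place, the optional-stopping step in (3) $\Rightarrow$ (2) is routine and the remaining pieces follow from the Markov-chain machinery already assembled in Section~\ref{sec2}.
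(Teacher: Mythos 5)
The paper does not actually prove Theorem \ref{tm4-1}: it is imported verbatim from \cite{WWbook} (Theorems 1.16 and 2.12) and used as a black box to dispose of the case $p=1$ of Theorem \ref{tm1}. So there is no in-paper argument to match; what you have written is a self-contained proof, and it is essentially correct --- indeed it follows the same classical probabilistic/potential-theoretic route as the cited source. Your (1)$\Leftrightarrow$(3) via $g(x,x)\mu(x)=\sum_n P_n(x,x)=(1-\rho_x)^{-1}$ and the propagation inequality $g(x,y)\geq P_n(x,z)g(z,y)$ from Lemma \ref{lem1} is sound; the supermartingale/optional-stopping argument for (3)$\Rightarrow$(2) is the standard one (recurrence plus irreducibility gives $\tau_y<\infty$ a.s., then Fatou); the contrapositive (2)$\Rightarrow$(3) correctly exhibits $g(\cdot,y_0)$ as a nonconstant positive superharmonic function via $-\Delta g(\cdot,y_0)=\mu(y_0)^{-1}\mathbf{1}_{\{y_0\}}$; and the exhaustion-plus-equilibrium-potential scheme for (1)$\Leftrightarrow$(4) is the standard identification, with the monotonicities $g^{U_n}(x,x)\uparrow g(x,x)$ and $\text{Cap}(\{x\},U_n)\downarrow\text{Cap}(\{x\})$ both justified exactly as you say. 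One small correction: under this paper's normalization $g(x,y)=\sum_n P_n(x,y)/\mu(y)$, the equilibrium potential $v=g^{U_n}(\cdot,x)/g^{U_n}(x,x)$ has energy $c/g^{U_n}(x,x)$ for a numerical constant $c$ ($c=1$ with the usual $\tfrac12\sum_{x,y}$ convention, $c=2$ if the capacity \eqref{pcap}-type sum over ordered pairs is read literally), \emph{not} $\mu(x)/g^{U_n}(x,x)$; your formula corresponds to the unnormalized convention $g=\sum_n P_n$ used in \cite{WWbook}. Since the argument only needs ``capacity vanishes iff the Green function diverges,'' this normalization slip is harmless. The net effect of your approach is that the theorem becomes self-contained rather than cited, at the cost of importing standard Markov-chain facts (the recurrence dichotomy, optional stopping) that the paper deliberately avoids reproving.
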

Noting that when the graph $(V,\mu)$ is parabolic, the three conditions in Theorem \ref{tm1} are always valid, so without loss of generality we always assume $(V,\mu)$ is non-parabolic,
which means there exists a finite non-negative Green function on graphs.

 The next theorem is devoted to deal with general case $p>1$.
\begin{theorem}\label{theo1}\rm
	For $1<p<\infty$, let $(V,\mu)$ be an infinite, connected, locally finite graph.  Then the following conditions are equivalent:
	\begin{enumerate}
		\item[(a)]{$(V,\mu)$ is $L^p$-parabolic.}
		\item[(b)]{$(V,\mu)$ admits $L^q$-Liouville property.}
		\item[(c)]{Any non-negative solution to (\ref{equ})} is harmonic.
		\item[(d)]{For some (or, all) $x,y\in V$, $g_{q}(x,y)=\infty$ . }
	\end{enumerate}
Here $p, q$ are H\"{o}lder conjugate  exponents, and $g_q(x,y)$ is defined  in (\ref{gq}).
\end{theorem}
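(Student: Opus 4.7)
The plan is to establish the four-way equivalence by taking condition (d) as the hub and proving (a)$\Leftrightarrow$(d), (b)$\Leftrightarrow$(d), and (c)$\Leftrightarrow$(d) in turn. The motivation for pivoting on (d) is that $g_q(y,y)=\|g(\cdot,y)\|_{L^q}^q$ simultaneously governs the $L^p$-capacity of the singleton $\{y\}$ (through the dual formulation in Definition~\ref{lpcap}) and controls the existence of the natural super-harmonic test function $g(\cdot,y)$, while a variant $g_q(\cdot,y_0)=G(g(\cdot,y_0)^{q-1})$ furnishes the canonical non-harmonic solution of (\ref{equ}). For (a)$\Leftrightarrow$(d) I compute $C_p(\{y\})$ directly from Definition~\ref{lpcap}: the only admissible measures supported on $\{y\}$ are $c\delta_y$, giving $Gf(x)=c\mu(y)g(x,y)$ and $\|Gf\|_{L^q}^q=c^q\mu(y)^q g_q(y,y)$; optimizing in $c$ yields $C_p(\{y\})=g_q(y,y)^{-(p-1)}$, so $C_p(\{y\})=0$ iff $g_q(y,y)=\infty$. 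Subadditivity and monotonicity (Proposition~\ref{prop-cap-1}) reduce $L^p$-parabolicity to vanishing of $C_p$ on singletons, and Lemma~\ref{lem1} promotes one-pair finiteness of $g_q$ to all pairs.

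For (b)$\Leftrightarrow$(d), the implication (b)$\Rightarrow$(d) is the contrapositive: if $g_q<\infty$ then $u=g(\cdot,y_0)$ is a non-trivial positive super-harmonic function with $\|u\|_{L^q}^q=g_q(y_0,y_0)<\infty$, violating $L^q$-Liouville. For (d)$\Rightarrow$(b), given super-harmonic $u\in L^q_+(V)$, set $w=-\Delta u\ge 0$; by Riesz representation on the non-parabolic graph, $Gw\le u$ pointwise. Using the $\hat C_p$-characterization (Theorem~\ref{cape}) and picking $f\in L^p_+$ with $Gf\ge 1$ on a finite $K$ and $\|f\|_p<\epsilon$, self-adjointness of $G$ combined with H\"older gives
\[
\sum_{K}w\,\mu \;\le\; \sum_{V}Gf\cdot w\,\mu \;=\; \sum_{V}f\cdot Gw\,\mu \;\le\; \sum_{V}f u\,\mu \;\le\; \|f\|_p\|u\|_q<\epsilon\|u\|_q,
\]
forcing $w\equiv 0$ and hence $u$ harmonic. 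Running the same argument on each truncation $u\wedge n$ (still super-harmonic in $L^q_+$) shows each $u\wedge n$ is harmonic, and a maximum-principle/connectedness argument then forces $u$ to be constant, hence zero (under the standing $\mu(V)=\infty$).

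For (c)$\Leftrightarrow$(d), the direction (c)$\Rightarrow$(d) is proved by assuming $g_q<\infty$ and checking that $u(x):=g_q(x,y_0)$ is a non-negative non-harmonic solution of (\ref{equ}). Using $(p-1)(q-1)=1$, a direct computation via Lemma~\ref{lem2} yields $-\Delta u=g(\cdot,y_0)^{q-1}\ge 0$ and $|\Delta u|^{p-2}\Delta u=-g(\cdot,y_0)$, so $\Delta(|\Delta u|^{p-2}\Delta u)=-\Delta g(\cdot,y_0)=\delta_{y_0}/\mu(y_0)\ge 0$. For (d)$\Rightarrow$(c), take $u\ge 0$ solving (\ref{equ}) with $w:=-\Delta u\not\equiv 0$, set $\phi:=-\Delta(w^{p-1})\ge 0$, and Riesz-decompose $w^{p-1}=G\phi+h$ with $h\ge 0$ harmonic. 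In the main subcase $\phi(z_0)>0$, one has $w^{p-1}(x)\ge g(x,z_0)\phi(z_0)\mu(z_0)$, hence $w(x)\ge c\,g(x,z_0)^{q-1}$, and thus
\[
u(x)\ge Gw(x)\ge c\sum_{y}g(x,y)g(y,z_0)^{q-1}\mu(y)=c\,g_q(x,z_0)=\infty
\]
by (d), contradicting the finiteness of $u(x)$.

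The main technical obstacle is the residual subcase $\phi\equiv 0$ but $h\not\equiv 0$ of (d)$\Rightarrow$(c), in which $w^{p-1}$ is a non-trivial positive \emph{harmonic} function rather than a pure potential. I plan to dispose of this case by iterating the Riesz decomposition on $h^{q-1}=w$ together with a Jensen-type bound that exploits the convexity or concavity of $t\mapsto t^{q-1}$ (according to whether $p\ge 2$ or $1<p<2$), combined with the already-established (b)$\Leftrightarrow$(d) applied to a suitable truncation or Green potential built from $h$, so as once again to force $Gw=\infty$ and contradict $u<\infty$.
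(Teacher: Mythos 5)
Your architecture (pivoting everything on condition (d)) coincides with the paper's, and most of your individual steps are sound; in particular the direct evaluation $C_p(\{y\})=g_q(y,y)^{-(p-1)}$ straight from Definition \ref{lpcap} is a clean shortcut that bypasses the dual characterizations of Theorem \ref{cape} which the paper routes through for (a)$\Leftrightarrow$(d). The one genuine gap is the ``residual subcase'' of (d)$\Rightarrow$(c), which you leave as a plan rather than a proof. The fix is that the case split should never be made: do not pass through the Riesz decomposition of $w^{p-1}$ at all. Since $w^{p-1}$ is a non-negative superharmonic function which is not identically zero (because $u$ is assumed non-harmonic), the minimum principle on the connected graph gives $w(x)>0$ for every $x$; fixing $z_0$ and choosing $\lambda$ with $\lambda w^{p-1}(z_0)\geq g(z_0,z_0)$, comparison of $\lambda w^{p-1}$ with $g^{U_n}(\cdot,z_0)$ on an exhaustion (maximum principle on $U_n\setminus\{z_0\}$, then $n\to\infty$) yields $w^{p-1}\geq \lambda^{-1}g(\cdot,z_0)$ regardless of whether $w^{p-1}$ is a pure potential or carries a harmonic part. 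From there your chain $u(x)\geq Gw(x)\gtrsim g_q(x,z_0)$ produces the contradiction in all cases at once; this is exactly the paper's argument. Your proposed iteration of the Riesz decomposition with a Jensen-type convexity bound is both vaguer and harder than necessary, and it is not clear it closes: ruling out a non-trivial positive harmonic $w^{p-1}$ with $Gw<\infty$ seems to require re-deriving precisely the comparison above.

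Two smaller points. In (d)$\Rightarrow$(b) you invoke ``the standing $\mu(V)=\infty$''; this is not a hypothesis of the theorem, but it does follow from the reduction to non-parabolic graphs (a connected reversible chain with finite reversing measure is positive recurrent), so the justification should be made explicit. Also, your route for (d)$\Rightarrow$(b) detours through $L^p$-parabolicity, the $\hat{C}_p$ characterization, and a truncation argument to upgrade ``$u$ harmonic'' to ``$u$ constant, hence zero''; this is correct once the previous point is settled, but the paper's one-step comparison $g(\cdot,x_0)\leq\lambda u$ (maximum principle again, using only that $u$ is a non-trivial non-negative superharmonic function, harmonic or not) gives $g_q(x_0,x_0)\leq\lambda^q\left\Vert u\right\Vert_{L^q(V)}^q<\infty$ immediately and is considerably shorter.
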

\begin{proof}
		We complete the proof by using contradiction argument, and we finish the proof by showing that (a), (b), (c) are equivalent to (d).

(a)$\Rightarrow$(d). Assume that (d) is not valid, we know from Lemma \ref{lem1}, there exists some $x_0\in V$ such that
$$g_q(x_0, x_0)<\infty.$$
For any finite set $K\subset V$ with $x_0\in K$, define
$$h(x)=g(x,x_0). $$
Since $-\Delta h(x)=\frac{\delta_{x_0}(x)}{\mu(x_0)}$, and for any function $v\in \ell_0(V)$ such that $v(x_0)\geq1$,  we have
	\begin{equation*}
		\begin{split}
			1&\leq\sum\limits_{x\in V}(-\Delta h(x))v(x)\mu(x)=\sum\limits_{x\in V}h(x) (-\Delta v(x))\mu(x)\\
			&\leq \left(\sum\limits_{x\in V} h(x)^q\mu(x)\right)^\frac{1}{q}\left(\sum\limits_{x\in V}|\Delta v(x)|^p\mu(x)\right)^\frac{1}{p}.
		\end{split}
	\end{equation*}
	Hence
	\begin{equation*}
		\sum\limits_{x\in V}|\Delta v(x)|^p\mu(x)\geq \left(\sum\limits_{x\in V} h(x)^q\mu(x)\right)^{-\frac{p}{q}}=[g_q(x_0,x_0)]^{-\frac{p}{q}}\mu(x_0)^{-p},
	\end{equation*}
By the definition of $\text{C}_p(\{x_0\})$ and Theorem \ref{cape}, we obtain that
$$\text{C}_p(\{x_0\})=\bar{C}_p(\{x_0\})>0,$$
which yields that $(V, \mu)$ is not $L^p$-parabolic, and this contradicts with (a). Thus, (d) is valid.

(d)$\Rightarrow$(a).
Assume that $(V, \mu)$ is not $L^p$-parabolic, then there exists some finite set $A$ such that $C_p(A)>0$.
 By Proposition \ref{prop-cap-1}, there exists some $x_0\in A$ such that $C_p(\{x_0\})>0$.
	
	Given a finite set $U\subset V$ with $x_0\in U$, let us set
	$$g_q^U(x,x_0):=G([g^U(x,x_0)]^{q-1})=\sum\limits_{z\in V}g(x,z)[g^U(z,x_0)]^{q-1} \mu(z),$$
	and
	$$f(x):=\frac{[g^U(x,x_0)]^{q-1}}{g^U_q(x_0,x_0)}. $$
It follows that $g_q^U(x, x_0)<\infty$, $f\in \ell_0(V)$, and $Gf(x_0)=1$. Moreover,
	\begin{align*}
			0&<C_p(\{x_0\})=\hat{C}_p(\{x_0\})\leq \left\Vert f(x)\right\Vert_{L^p(V)}^p\\
			&=\frac{\sum\limits_{x\in V}[g^U(x,x_0)]^q\mu(x)}{[g^U_q(x_0,x_0)]^{p}}
	\end{align*}
	Since $\sum\limits_{x\in V}[g^U(x,x_0)]^q\mu(x)\leq g^U_q(x_0,x_0)$, then
	\begin{equation*}
		g^U_q(x_0,x_0)\leq \left(\frac{1}{C_p(\{x_0\})}\right)^{\frac{1}{p-1}}.
	\end{equation*}
	Similarly, letting $\{U_n\}$ be an exhaustion sequence of $V$ containing $\{x_0\}$, we arrive
	\begin{equation*}
		g^{U_n}_q(x_0,x_0)\leq \left(\frac{1}{C_p(\{x_0\})}\right)^{\frac{1}{p-1}}.
	\end{equation*}
	Letting $n\to \infty$ and using Monotone Convergence theorem,  we obtain
	\begin{equation*}
		g_q(x_0,x_0)\leq \left(\frac{1}{C_p(\{x_0\})}\right)^{\frac{1}{p-1}}.
	\end{equation*}
	which contradicts with (d) by Lemma \ref{lem1}, hence (a) holds.

	(b)$\Rightarrow$(d). Assume that (d) is not valid, then there exists some $x_0\in V$ such
      that
      $g_q(x_0, x_0)<\infty$. Noting that
      $$\left\Vert g(x,x_0)\right\Vert^{q}_{L^q(V)}=g_q(x_0,x_0)<\infty,$$
      and   $g(x,x_0)$ is a non-trivial positive super-harmonic function, which contradicts with (b). Thus, we obtain (d).

	(d)$\Rightarrow$(b). Assume that (b) fails, then there exists  $f\in L^q(V)$ which is a non-trivial positive super-harmonic function, hence there exists some $x_0\in V$ such that $-\Delta f(x_0)>0$. Let us choose $\lambda>0$ such that $\lambda f(x_0)\geq g(x_0,x_0)$.

Let $\{U_n\}$ be an exhaustion sequence of $V$ with $x_0\in U_n$.
	By Maximum principle (cf. \cite[Lemma 1.39]{AGbook} ),  we have
$$g^{U_n}(x,x_0)\leq \lambda f(x),\quad \mbox{for all $x\in U_n$.}	$$
It follows by letting $n\to\infty$ that
$$g(x,x_0)\leq \lambda f(x).$$
	
	Noting $\sum\limits_{x\in V} f(x)^q \mu(x)<\infty$, we obtain
 $$g_{q}(x_0,x_0)=\sum\limits_{x\in V} g(x,x_0)^q \mu(x)<\infty,$$
	which contradicts with (d). Thus, it shows that (b) is true.
	
(c)$\Rightarrow$(d). Assume (d) fails, then fix $x_0\in V$, we know
$g_q(x,x_0)<\infty$,  by Lemma \ref{lem2}, a direct calculation shows that $g_{q}(x,x_0)$ is a positive solution to (\ref{equ}), but
$g_q(x, x_0)$ is not harmonic, which contradicts with (c).

(d)$\Rightarrow$(c). Assume (c) fails, then  there exists a non-negative nonharmonic function $h$ which is a solution to (\ref{equ}).
    Set
    $$h_1:=-\Delta h\geq0,$$
     then
     $$-\Delta h_1^{p-1}=\Delta(|\Delta h|^{p-2}\Delta h)\geq 0.$$

    Since $h$ is not harmonic and $h_1^{p-1}$ is super-harmonic, by  Maximum principle, $h_1$ is strictly positive. Then for fixed $x_0$, by Maximum principle again, we obtain
    $$h_1(x)^{p-1}\gtrsim  g(x,x_0).$$

    Noting that for any positive integer $l$,
    \begin{equation*}
    	\sum\limits_{n=0}^{l} P_n(-\Delta)h=\sum\limits_{n=0}^{l} P_n(I-P)h=h-P_{l+1}h\leq h,
    \end{equation*}
we obtain
    \begin{equation*}
    	\sum\limits_{y\in V} g(x,y) h_1(y)\mu(y)=G h_1(x)=\sum\limits_{n=0}^{\infty} P_n(-\Delta)h(x)\leq h(x).
    \end{equation*}
Thus,
    \begin{equation*}
    	\begin{split}
    		h(x_0)&\geq \sum\limits_{y\in V} g(x_0,y) h_1(y)\mu(y)\\
    		&\gtrsim\sum\limits_{y\in V} g(x_0,y) g(y,x_0)^{\frac{1}{p-1}}\mu(y)\\
    		&\gtrsim \sum\limits_{y\in V} g(y,x_0)^q\mu(y) \\
    		&=g_{q}(x_0,x_0).
    	\end{split}
    \end{equation*}
    Combining with Lemma \ref{lem1}, we have $g_q(x,y)<\infty$ for any $x,y\in V$, which contradicts with (d). Thus, we complete the proof.
\end{proof}

    \begin{corollary}\rm\label{st-parabolic}
     	For every $1\leq s<t<\infty$, if $(V,\mu)$ is $L^s$-parabolic, then $(V,\mu)$ is $L^t$-parabolic.
    \end{corollary}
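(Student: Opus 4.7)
The plan is to deduce the corollary from the characterization (d) of Theorem \ref{theo1}, which says that for $1 < p < \infty$, $L^p$-parabolicity is equivalent to $g_{p'}(x_0,x_0) = +\infty$ at some (equivalently, every) vertex $x_0$, where $p'$ is the H\"older conjugate of $p$. Before addressing the interior range, I would dispose of the parabolic situation: if $(V,\mu)$ is already parabolic, then Theorem \ref{tm4-1} gives $g \equiv +\infty$, hence $g_q \equiv +\infty$ for every $q > 1$, and Theorem \ref{theo1}(d) makes $(V,\mu)$ $L^r$-parabolic for every $r \in (1,\infty)$; combined with the remark that $L^1$-parabolicity coincides with parabolicity, the conclusion is immediate. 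So one may assume $(V,\mu)$ is non-parabolic, which also forces $s > 1$, since otherwise the hypothesis would already imply parabolicity. Hence $1 < s < t < \infty$ and the conjugates satisfy $\infty > s' > t' > 1$.

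The heart of the argument is a monotonicity of $g_q(x_0,x_0)$ in $q$ that exploits the uniform boundedness of $g(\cdot,x_0)$ on a non-parabolic graph. Using the symmetry $g(z,x_0) = g(x_0,z)$, rewrite
\begin{equation*}
g_{s'}(x_0,x_0) = \sum_{z \in V} g(x_0,z)^{s'} \mu(z), \qquad g_{t'}(x_0,x_0) = \sum_{z \in V} g(x_0,z)^{t'} \mu(z).
\end{equation*}
On a non-parabolic graph $g(x_0,x_0) < \infty$, and by the maximum principle applied to the local Green functions $g^{U_n}(\cdot,x_0)$ (which are harmonic on $U_n \setminus \{x_0\}$ and vanish on $U_n^c$, so attain their maximum at $x_0$), passage to the limit along an exhaustion yields the uniform bound $g(z,x_0) \leq g(x_0,x_0)$ for every $z \in V$.

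Once that bound is in place, the rest is a one-line computation: since $s' > t'$,
\begin{equation*}
g(z,x_0)^{s'} = g(z,x_0)^{s' - t'}\, g(z,x_0)^{t'} \leq g(x_0,x_0)^{s'-t'}\, g(z,x_0)^{t'},
\end{equation*}
and summing against $\mu$ gives $g_{s'}(x_0,x_0) \leq g(x_0,x_0)^{s'-t'}\, g_{t'}(x_0,x_0)$. The hypothesis that $(V,\mu)$ is $L^s$-parabolic forces the left-hand side to equal $+\infty$ (Theorem \ref{theo1}(d)), while the prefactor is finite, so $g_{t'}(x_0,x_0) = +\infty$. Applying Theorem \ref{theo1}(d) in the reverse direction yields $L^t$-parabolicity.

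The only mild obstacle is the justification of $g(\cdot,x_0) \leq g(x_0,x_0)$ in the graph setting; this follows either from the probabilistic identity $g(z,x_0) = \mathbb{P}_z[\tau_{x_0} < \infty]\, g(x_0,x_0)$ via the strong Markov property, or analytically from the minimum/maximum principle applied to the exhausting sequence $\{g^{U_n}(\cdot,x_0)\}$ (a tool already used in the proof of Theorem \ref{theo1}(d)$\Rightarrow$(b) above). Everything else is routine bookkeeping through characterization (d).
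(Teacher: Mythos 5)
Your proposal is correct and follows essentially the same route as the paper: reduce to the non-parabolic case, use the uniform bound $g(\cdot,x_0)\leq g(x_0,x_0)$ (which the paper cites from Barlow's Theorem 1.31 via the hitting probability $h(x,x_0)$), deduce $g_{s'}(x_0,x_0)\leq g(x_0,x_0)^{s'-t'}g_{t'}(x_0,x_0)$, and conclude through characterization (d) of Theorem \ref{theo1}. Your exponent $s'-t'$ is in fact the correct one (the paper's displayed $t'-s'$ appears to be a typo), and your handling of the $s=1$ case via $g\equiv\infty\Rightarrow g_q\equiv\infty$ is an equivalent minor variant of the paper's appeal to the $L^\infty$-Liouville property.
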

    \begin{proof}
    	Let $s'$ and $t'$ be the H\"{o}lder conjugate exponents corresponding to $s$ and $t$ respectively.
    	
    	When $s=1$ and $t>1$, the parabolicity of $(V,\mu)$ is equivalent to that any non-negative super-harmonic function on $(V,\mu)$ is constant. Hence $(V,\mu)$ also admits $L^{t'}$-Liouville property. Then by Theorem \ref{theo1}, $(V, \mu)$ is $L^{t}$-parabolic.
    	
    	Now for $1<s<t<\infty$, without loss of generality, let us assume that $(V,\mu)$ is not parabolic. By \cite[ Theorem 1.31]{MBbook}, for  fixed $x_0\in V$, we have
    	$$g(x,x_0)=h(x,x_0)g(x_0,x_0)\leq g(x_0,x_0), \quad \text{for all $x\in V$},$$
    	where
    $$h(x,y):=\mathbb{P}[X_0=x,\exists n\in \mathbb{N}, \enspace \text{s.t.}\enspace  X_n=y].$$
    	
    	Hence
    $$g_{s'}(x_0,x_0)=\sum\limits_{x\in V} g(x,x_0)^{s'}\mu(x)\leq g(x_0,x_0)^{t'-s'} g_{t'}(x_0,x_0),$$
     which finishes the proof by Theorem \ref{theo1}.
    \end{proof}

\section{Volume growth condition}\label{volume}

 The volume growth condition for $L^1$-parabolicity (or equivalently, parabolicity) has been well studied in the existing literatures, see
     \cite{AGbook, S95, S97, WWbook}.  We introduce a sufficient volume condition for parabolicity, which is a direct consequence of the Nash-Williams' test:
      if for some $o\in V$, the following
     \begin{eqnarray}\label{votest}
     	\sum^{\infty}_{n=1}\frac{n}{\mu(B(o,n))}=\infty,
     \end{eqnarray}
     is valid, then $(V,\mu)$ is parabolic, see \cite[Theorem 6.13]{AGbook}.

If $(V,\mu)$ is  $L^1$-parabolic, by Corollary \ref{st-parabolic}, we derive that $(V, \mu)$ is also $L^p$-parabolic
for $p\geq1$.	
Thus, without loss of generality, we always assume that $(V,\mu)$ is non-parabolic throughout this section.

Define
$$\mu_0:=\mathop{\text{inf}}\limits_{x\in V} \mu(x).$$

     Now let us deal with  general case $p>1$. First, we study $1<p\leq2$.
\begin{theorem}\label{tm5-1}\rm
	For $1<p\leq 2$, assume that $\mu_0 > 0$.
	 If there exists $ o\in V$ such that
	\begin{equation}\label{5-1}
		V(o,r)\lesssim \frac{r^{2p}}{\log{r}},\quad\mbox{for all large enough $r$},
	\end{equation}
	then $(V,\mu)$ is $L^p$-parabolic.
\end{theorem}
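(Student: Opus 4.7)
The plan is to apply Theorem~\ref{theo1}(d), reducing $L^p$-parabolicity to proving $g_q(x_0, x_0) = \infty$ for some $x_0 \in V$. We may assume $(V, \mu)$ is non-parabolic, since otherwise the conclusion follows from Corollary~\ref{st-parabolic}. We take $x_0 = o$, the base point from the volume hypothesis.

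\textbf{Reduction via Jensen.} Apply Jensen's inequality with the convex function $t \mapsto t^q$ on the probability space $\bigl(B(x_0, R), \mu/V(x_0, R)\bigr)$:
\begin{equation*}
	g_q(x_0, x_0) \geq \sum_{x \in B(x_0, R)} g(x, x_0)^q \mu(x) \geq \frac{\bigl(G \mathbf{1}_{B(x_0, R)}(x_0)\bigr)^q}{V(x_0, R)^{q-1}}.
\end{equation*}
Combining this with the volume hypothesis $V(x_0, R) \lesssim R^{2p}/\log R$ and using the H\"older conjugacy $p + q = pq$ (so that $2q - 2p(q-1) = 0$), one obtains
\begin{equation*}
	g_q(x_0, x_0) \gtrsim \bigl(G \mathbf{1}_{B(x_0, R)}(x_0)\bigr)^q \cdot R^{-2p(q-1)} (\log R)^{q-1}.
\end{equation*}
Thus it suffices to establish the diffusive lower bound
\begin{equation}\label{diff-est-prop}
	G \mathbf{1}_{B(x_0, R)}(x_0) \gtrsim R^2 \qquad \text{for all sufficiently large } R,
\end{equation}
which yields $g_q(x_0, x_0) \gtrsim (\log R)^{q-1} \to \infty$, completing the argument.

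\textbf{Proof strategy for the diffusive estimate.} For \eqref{diff-est-prop}, one works with the Dirichlet Green potential $\psi_R := G^{B(x_0, R)} \mathbf{1}_{B(x_0, R)}$; it satisfies $\psi_R(x_0) \leq G \mathbf{1}_{B(x_0, R)}(x_0)$ and solves $-\Delta \psi_R = 1$ on $B(x_0, R)$ with $\psi_R \equiv 0$ on the complement. The approach is to construct a sub-solution $\widetilde\psi_R$ with $\widetilde\psi_R(x_0) \gtrsim R^2$ and $-\Delta \widetilde\psi_R \leq 1$; Maximum Principle then yields $\psi_R \geq \widetilde\psi_R$, whence \eqref{diff-est-prop}. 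A natural candidate is $\widetilde\psi_R(x) = c\bigl(R - d(x_0, x)\bigr)^2$ for small $c > 0$, where the assumption $\mu_0 > 0$ enters in bounding $|\Delta d(x_0, \cdot)^2(x)|$ uniformly on $B(x_0, R)$ and hence in calibrating the constant $c$.

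\textbf{Main obstacle.} The critical obstruction is that on a general weighted graph the distance-squared function need not be sub-harmonic, and its Laplacian carries a first-order term
\begin{equation*}
	\Delta d(x_0, \cdot)^2(x) = 2 d(x_0, x)(c_x - a_x) + (a_x + c_x),
\end{equation*}
where $a_x$ and $c_x$ denote, respectively, the transition probabilities to neighbors of $x$ closer to and further from $x_0$. In the absence of isotropy (e.g.\ $c_x \leq a_x$), the first term can be of order $d(x_0,x)$ and of adverse sign, obstructing the direct Maximum Principle comparison. The remedy must exploit the averaged divergence structure, namely $\sum_{x \in S_n}(c_x - a_x)\mu(x) = a_{n+1} - a_n$ (where $a_n$ is the conductance between the spheres $S_n$ and $S_{n+1}$), balanced against the volume growth $V(o, r) \lesssim r^{2p}/\log r$. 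The restriction $1 < p \leq 2$ enters precisely here: only in this range does this averaged estimate close up from the volume hypothesis alone, whereas the case $p > 2$ requires the stronger assumptions (VD), (PI), $(P_0)$ of Theorem~\ref{vol-opt} together with the Li--Yau type Green function bounds of Theorem~\ref{GF-est}.
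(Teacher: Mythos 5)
Your reduction to showing $g_q(o,o)=\infty$ via Theorem \ref{theo1}(d) is the same starting point as the paper, and your Jensen step over the ball $B(x_0,R)$ is arithmetically correct (indeed $2q-2p(q-1)=0$, so everything hinges on the factor $(\log R)^{q-1}$ coming from the volume hypothesis). But the proof has a genuine gap at its load-bearing step: the diffusive lower bound $G\mathbf{1}_{B(x_0,R)}(x_0)\gtrsim R^2$, i.e.\ $E_{x_0}[\tau_{B(x_0,R)}]\gtrsim R^2$, is not proven, and you yourself concede that the comparison function $c(R-d(x_0,\cdot))^2$ fails because $\Delta d(x_0,\cdot)^2$ carries a first-order drift term of size $O(d)$ with possibly adverse sign. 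The ``remedy'' you sketch (averaging the divergence structure over spheres against the volume growth) is not carried out, and it is not clear it can be: for spherically symmetric graphs the birth--death computation gives $E_o[\tau_R]=\sum_{n<R}C_n/c_n$ with $C_n=\sum_{m\le n}c_m$, and under $C_R\lesssim R^4$ the best one extracts by Cauchy--Schwarz on $\log C_n$ is $E_o[\tau_R]\gtrsim R^2/\log R$; the general Carne--Varopoulos route likewise yields only $P_{x_0}(\tau_{B(R)}\le \delta R^2/\log R)\to 0$, hence again $E[\tau]\gtrsim R^2/\log R$. This logarithmic loss is fatal to your argument, because your Jensen step has zero slack in the power of $R$: plugging $G\mathbf{1}_{B(R)}(x_0)\gtrsim R^2/\log R$ into your chain gives $g_q(x_0,x_0)\gtrsim(\log R)^{q-1-q}=(\log R)^{-1}$, which is bounded and proves nothing.

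For comparison, the paper avoids the spatial ball entirely. It applies Jensen in the time variable: writing $g(o,x)=\sum_n p_n(o,x)$ and using that $p_n(o,\cdot)\mu$ is a probability measure (together with convexity of $t\mapsto t^{q-1}$, which is where $1<p\le 2$ enters), it gets $g_q(o,o)\ge\sum_{n}\bigl(\sum_{m\ge n}p_m(o,o)\bigr)^{q-1}$, and then invokes the on-diagonal lower bound $p_n(o,o)\gtrsim 1/V(o,\sqrt{cn\log n})$ of Lust-Piquard (this is where $\mu_0>0$ is used). The logarithm inside $\sqrt{cn\log n}$ is exactly absorbed by the $\log r$ in the hypothesis $V(o,r)\lesssim r^{2p}/\log r$, which is why the paper's route closes while yours, which must pay a comparable logarithmic price in the exit-time estimate but has no budget for it, does not. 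To salvage your approach you would need to produce the clean bound $E_{x_0}[\tau_{B(x_0,R)}]\gtrsim R^2$ from the stated hypotheses alone, and no argument for that is given.
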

\begin{proof}
	Letting $q=\frac{p}{p-1}$,  using Fubini's theorem and Jensen's inequality, we have
	\begin{align}\label{est-1}
			g_{q}(o,o)&=\sum\limits_{x\in V} g(o,x)g(x,o)^{q-1} \mu(x)\nonumber\\
			&=\sum\limits_{x\in V} \sum\limits_{n=0}^{\infty}p_n(o,x)g(x,o)^{q-1} \mu(x)\nonumber\\
			&\geq \sum\limits_{n=0}^{\infty}\left(\sum\limits_{x\in V}p_n(o,x)g(x,o)\mu(x)\right)^{q-1}\nonumber\\
			&=\sum\limits_{n=0}^{\infty}\left(\sum\limits_{m=0}^{\infty}\sum\limits_{x\in V}p_n(o,x)p_m(x,o)\mu(x)\right)^{q-1}\nonumber\\
			&=\sum\limits_{n=0}^{\infty}\left(\sum\limits_{m=n}^{\infty}p_m(o,o)\right)^{q-1}.
	\end{align}
where we have used that $$\sum_{x\in V}p_n(o,x)\mu(x)=\sum_{x\in V}P_n(o, x)=1.$$
	
	Since  $\mu_0 > 0$, and by the diagonal heat kernel lower estimate of \cite{L},
we obtain there exists some $n_0$ such that
	$$p_{n}(o,o)\gtrsim \frac{1}{V(o,\sqrt{c n\log n})},\quad\mbox{for all $n\geq n_0$}.$$
Substituting the above into (\ref{est-1}), and combining with (\ref{5-1}), we obtain
	\begin{equation*}
		\begin{split}
			g_{q}(o,o)& \gtrsim \sum\limits_{n=n_0}^{\infty}\left(\sum\limits_{m=n}^{\infty} m^{-p}(\log m)^{1-p}\right)^{q-1},\\
			&\simeq\sum\limits_{n=n_0}^{\infty} (n \log n)^{-1} =\infty.
		\end{split}
	\end{equation*}
Thus by Theorem \ref{theo1}, we complete the proof.
\end{proof}
\begin{remark}\rm
It is worth to point out that the volume condition (\ref{5-1}) in Theorem \ref{tm5-1} is not sharp here. In fact, if we have optimized diagonal lower bound of heat kernel
$$p_{n}(o,o) \gtrsim \frac{1}{V(o,\sqrt{n})},$$
then (\ref{5-1}) can be improved to
$$V(o,r)\lesssim r^{2p}(\log{r})^{p-1}.$$
\end{remark}

In the last part of section \ref{volume}, we try to prove a Li-Yau type estimate of Green function via volume growth, namely, the estimate
(\ref{bdg}) in Theorem \ref{tmg}. Then by using the estimate of Green function, we can obtain a sharp volume condition for $L^p$-capacity on a class of graphs with
 good geometric property,
see Theorem \ref{tvcon2}.

\begin{definition}\rm
We say that $(V,\mu)$ satisfies condition $(\Delta)$\label{D} if it admits condition \hyperref[p0]{$(P_0)$}, and every vertex has a loop, namely
\begin{equation}
	\left\{
	\begin{array}{lr}
		\mbox{$y\sim x$ }\Rightarrow	\frac{\mu_{xy}}{\mu(x)}\geq \alpha,\\
		x\sim x,\quad \mbox{for all $x\in V$ },
	\end{array}
	\right.\tag{$\Delta$}
\end{equation}	
\end{definition}
The following equivalent Gaussian estimate of heat kernel of $\Delta$ on graph was obtained by Delmotte in \cite{De}.
\begin{theorem}\label{tmp}\rm
	Let $(V,\mu)$ satisfy condition \hyperref[D]{$(\Delta)$}. The followings conditions are equivalent:
\begin{enumerate}
\item[(A)]{The weighted graph $(V,\mu)$ admits conditions \hyperref[VD]{($\text{VD})$}  and \hyperref[PI]{$(\text{PI})$}. }
\item[(B)]{There exist constants $ c_l,C_l,c_r,C_r>0$, such that
	\begin{equation}\label{GSe}
		\frac{c_l}{V(x,\sqrt{n})}e^{-\frac{C_ld(x,y)^2}{n}}\leq p_n(x,y)\leq\frac{c_r}{V(x,\sqrt{n})}e^{-\frac{C_rd(x,y)^2}{n}},
	\end{equation}
	holds for all $x,y\in V$ and all $ n\geq d(x,y)$.}
\end{enumerate}
\end{theorem}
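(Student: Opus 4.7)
The plan is to follow the Grigor'yan--Saloff-Coste blueprint adapted to the discrete setting, whose graph version is exactly Delmotte's argument. The harder direction is $(A)\Rightarrow(B)$, and the parabolic Harnack inequality (PHI) will be the pivot between the two sides of the equivalence: I expect to prove $(A)\Rightarrow\text{PHI}\Rightarrow(B)$, and then close the loop by verifying $(B)\Rightarrow(A)$ directly from the two-sided Gaussian bound.

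For $(A)\Rightarrow(B)$, I would first combine \hyperref[VD]{$(\text{VD})$} with \hyperref[PI]{$(\text{PI})$} on balls to derive a family of discrete Sobolev/Nash inequalities, using the local weight comparability forced by condition \hyperref[D]{$(\Delta)$}. Once a Nash inequality is in hand, Nash's iteration argument gives the on-diagonal upper bound $p_n(x,x)\lesssim 1/V(x,\sqrt{n})$. To upgrade this to the off-diagonal Gaussian upper bound, I would apply Davies' method: study the tilted semigroup $e^{\psi}P^{n}e^{-\psi}$ for a bounded Lipschitz weight $\psi$, track the growth of the perturbed $L^{2}$-norm in terms of $\|\,e^{\psi}(I-P)e^{-\psi}\|$, and optimize $\psi$ along a geodesic between $x$ and $y$. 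The lower bound is the more delicate half: I would run a discrete Moser iteration on \hyperref[VD]{$(\text{VD})$}+\hyperref[PI]{$(\text{PI})$} to obtain the full scale-invariant PHI for nonnegative solutions of the discrete heat equation $\partial_{n}u=\Delta u$; this gives the near-diagonal lower bound $p_{n}(x,y)\gtrsim 1/V(x,\sqrt{n})$ for $d(x,y)\le\varepsilon\sqrt{n}$, after which the standard chaining argument along a geodesic (of length comparable to $d(x,y)^{2}/n$ many hops) produces the Gaussian factor $e^{-C_{l}d(x,y)^{2}/n}$ in the regime $n\ge d(x,y)$. The role of \hyperref[D]{$(\Delta)$} throughout is twofold: the loops kill parity obstructions (so the lower bound need not be restricted to $n$ of a fixed parity), and the uniform lower bound $\mu_{xy}/\mu(x)\ge\alpha$ on neighbors makes the one-step transition comparable to a local mean, which is what allows the continuum Caccioppoli-type cutoff arguments to survive the discretization.

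For $(B)\Rightarrow(A)$, \hyperref[VD]{$(\text{VD})$} comes by a standard comparison: the lower bound on $p_{n}(x,x)$ together with $\sum_{y}P_{n}(x,y)=1$ forces $V(x,\sqrt{n})$ to control the summed lower bound over $B(x,\sqrt{n})$, while comparing $p_{n}(x,x)$ and $p_{2n}(x,x)$ via semigroup monotonicity yields doubling. For \hyperref[PI]{$(\text{PI})$}, I would first derive the PHI from the two-sided Gaussian bound (this is a routine consequence of $(B)$ together with \hyperref[D]{$(\Delta)$}), and then use the standard fact that PHI plus doubling implies a scale-invariant Poincar\'e inequality on balls.

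The principal obstacle is the implication $(A)\Rightarrow\text{PHI}$, i.e.\ running discrete Moser iteration under only \hyperref[VD]{$(\text{VD})$}+\hyperref[PI]{$(\text{PI})$}+\hyperref[D]{$(\Delta)$}. The technical friction is in controlling the discrete time derivative through telescoping sums, proving a reverse H\"older inequality for $\log u$ with constants that do not degenerate with scale, and handling the mismatch between time-balls and space-balls in the caloric cutoff. Once PHI is established, the remaining steps are fairly mechanical translations of the manifold theory. Since Delmotte has carried out precisely this program in \cite{De}, my proof would essentially amount to invoking his argument, with the only new input being that the hypothesis \hyperref[D]{$(\Delta)$} suffices in place of any stronger regularity that earlier authors had imposed.
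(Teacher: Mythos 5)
The paper does not actually prove this theorem: it is quoted verbatim as Delmotte's result from \cite{De}, and your outline --- deriving a Nash inequality from \hyperref[VD]{$(\text{VD})$}+\hyperref[PI]{$(\text{PI})$}, Davies' method for the Gaussian upper bound, Moser iteration to the parabolic Harnack inequality and chaining for the lower bound, then reversing via PHI for $(B)\Rightarrow(A)$ --- is precisely the strategy of that paper, so your proposal amounts to the same route the authors take (namely, invoking Delmotte). One small correction: condition \hyperref[D]{$(\Delta)$} (loops at every vertex plus $\mu_{xy}\ge\alpha\mu(x)$ for neighbors) is exactly Delmotte's hypothesis $\Delta(\alpha)$, so there is no ``new input'' in relaxing earlier regularity assumptions; the statement is his theorem as is, and the genuinely new work in the present paper is elsewhere (the modified weight $\hat{\mu}$ in Lemma \ref{gg-rel} and Proposition \ref{Prop-equiv}, which removes the loop assumption when passing to the Green function estimate).
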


Notting that in Theorem \ref{tmp}, the condition \hyperref[D]{$(\Delta)$} implies that $\mu_{xx} > 0$ for all $x\in V$, thus Theorem \ref{tmp} can not be directly applied to
 even for the simple lattice $\mathbb{Z}^d$.
To overcome such inconvenience,
 there is a classical technique to deal with this problem by constructing a new graph $(G, \hat{\mu})$ which admits a new heat kernel $p'(x,y)=p_2(x,y)$,
 namely, in the language of the weight
 $$\hat{\mu}_{xy}=\sum\limits_{z\in V}\frac{\mu_{xz}\mu_{zy}}{\mu(z)}.$$
 Moreover, the new graph $(V, \hat{\mu})$ admits conditions \hyperref[D]{$(\Delta)$} and \hyperref[VD]{($\text{VD})$},
  see \cite{CG2, De}. However it may be difficult to prove that this new graph preserves \hyperref[PI]{$(\text{PI})$} condition, and thus one
  can not use Delmotte's result directly.

 Inspired by \cite{CG2, De}, we refine the technique by inheriting part of information of ``old edge weight" to define the following new weight
 \begin{equation}\label{lem3-1}
     	\hat{\mu}_{xy}=\frac{1}{2}\mu_{xy}+\frac{1}{2}\sum\limits_{z\in V}\frac{\mu_{xz}\mu_{zy}}{\mu(z)}.
     \end{equation}
 Under this new edge weight, we construct a new graph $(V,\hat{\mu})$ by letting $x\sim y$ if and only if $\hat{\mu}_{xy}>0$.
By showing the new graph preserving \hyperref[VD]{($\text{VD})$} and \hyperref[PI]{$(\text{PI})$} conditions (see Proposition \ref{Prop-equiv}), and combining Theorem \ref{tmp}, we can
establish the estimate of Green functions, see Theorem \ref{tmg}.


    We now give the relationship of Green functions corresponding to different weights $\mu$ and $\hat{\mu}$.
     \begin{lemma}\label{gg-rel}\rm
     	Let $g(x,y)$ and $\hat{g} (x,y)$ be the corresponding Green functions on $(V,\mu)$ and $(V,\hat{\mu})$ respectively. Then
     	\begin{equation}\label{gghat}
     		\frac{1}{2}g(x,y)\leq \hat{g} (x,y)\leq  g(x,y).
       	\end{equation}
     \end{lemma}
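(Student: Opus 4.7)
My plan is to express $\hat{g}(x,y)\mu(y)$ as a series $\sum_m c_m P_m(x,y)$ with explicit nonnegative coefficients, compare this with $g(x,y)\mu(y) = \sum_m P_m(x,y)$, and reduce the lemma to a sandwich bound $\tfrac{1}{2}\leq c_m\leq 1$. The first step is the observation that the new edge weight preserves the vertex weight, i.e.,
\begin{equation*}
\hat{\mu}(x) = \tfrac{1}{2}\sum_y \mu_{xy} + \tfrac{1}{2}\sum_z \mu_{xz}\cdot\frac{\sum_y \mu_{zy}}{\mu(z)} = \tfrac{1}{2}\mu(x) + \tfrac{1}{2}\mu(x) = \mu(x).
\end{equation*}
Consequently the new one-step transition kernel factors cleanly as $\hat{P}(x,y) = \tfrac{1}{2}P(x,y) + \tfrac{1}{2}P_2(x,y)$, i.e.\ $\hat{P} = \tfrac{1}{2}(P + P^2)$ as operators.

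From here, the binomial theorem gives $\hat{P}^n = 2^{-n}\sum_{k=0}^n \binom{n}{k}P^{n+k}$. Summing over $n$ and reindexing with $m = n + k$ (Tonelli applies by nonnegativity) yields
\begin{equation*}
\hat{g}(x,y)\,\mu(y) = \sum_{n=0}^\infty \hat{P}_n(x,y) = \sum_{m=0}^\infty c_m\,P_m(x,y), \qquad c_m := \sum_{n=\lceil m/2\rceil}^{m}\binom{n}{m-n}2^{-n}.
\end{equation*}
Since $g(x,y)\mu(y) = \sum_m P_m(x,y)$ and $\hat{\mu}(y) = \mu(y)$, the lemma is then equivalent to the assertion that $\tfrac{1}{2}\leq c_m\leq 1$ for every $m\geq 0$.

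This combinatorial bound is the main point, and I would prove it probabilistically. The walk on $(V,\hat{\mu})$ is the walk on $(V,\mu)$ subsampled at the renewal times $N_0 = 0 < N_1 < N_2 < \cdots$ whose increments are i.i.d.\ uniform on $\{1,2\}$, so $c_m = \mathbb{P}[m \in \{N_n\}_{n\geq 0}]$; this is at most $1$ trivially. The complementary event---that $m$ is skipped---happens iff $m-1$ is a renewal time and the next increment equals $2$, yielding the recursion $c_m = 1 - \tfrac{1}{2}c_{m-1}$ with $c_0 = 1$; a short induction (or equivalently the closed form $c_m = \tfrac{2}{3} + \tfrac{1}{3}(-\tfrac{1}{2})^m$) then places $c_m \in [1/2, 1]$. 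Dividing the resulting inequalities by $\mu(y)$ delivers the lemma, and the argument remains valid even when $g(x,y) = \infty$ since the lower bound forces $\hat{g}(x,y) = \infty$ as well. The only real obstacle is identifying and confirming this renewal recursion; everything else is operator algebra and bookkeeping.
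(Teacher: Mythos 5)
Your proof is correct, and its skeleton coincides with the paper's: both start from $\hat{\mu}=\mu$ and $\hat{P}=\tfrac12(P+P^2)$, expand $\hat{P}_n$ binomially, and reduce the lemma to the bound $\tfrac12\le c_m\le 1$ for exactly the coefficients $c_m=\sum_{n}\binom{n}{m-n}2^{-n}$ that the paper denotes $c_k$; your Tonelli resummation merely replaces the paper's partial-sum sandwich $\sum_{n=0}^{l}c_nP_n\le\sum_{n=0}^{l}\hat P_n\le\sum_{n=0}^{2l}c_nP_n$ followed by $l\to\infty$, an immaterial difference. Where you genuinely diverge is in proving $\tfrac12\le c_m\le 1$. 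The paper grinds this out by setting $a_k=2^{2k}c_{2k}$ and $\bar a_k=2^{2k+1}c_{2k+1}$, showing via repeated applications of Pascal's formula that the successive differences satisfy $b_k=4b_{k-1}$, and assembling closed forms for the even and odd subsequences separately. Your renewal interpretation --- $c_m$ is the probability that the strictly increasing walk with i.i.d.\ increments uniform on $\{1,2\}$ hits $m$, so $1-c_m=\tfrac12 c_{m-1}$ because the only way to skip $m$ is to sit at $m-1$ and jump by $2$ --- is shorter, conceptually transparent, and yields the single closed form $c_m=\tfrac23+\tfrac13(-\tfrac12)^m$ covering both parities at once. It also silently corrects a typo in the paper, which lists $c_1=1$; the true value is $c_1=\tfrac12$, consistent with your formula (and harmless to the paper's argument, since only the two-sided bound is used). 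Both routes deliver the lemma, including the degenerate case $g(x,y)=\infty$.
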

     \begin{proof}
     	 From (\ref{lem3-1}), we have
     \begin{align}\label{equi-v}
     	\hat{\mu}(x)=\mu(x),
     \end{align}
     and
     	\begin{equation}\label{phat}
     		\hat{P}(x,y)=\frac{1}{2}P(x,y)+\frac{1}{2}P_{2}(x,y).
     	\end{equation}
     From (\ref{phat}), we have
     	\begin{equation*}
     		\begin{split}
     			\hat{P}_2(x,y)&=\frac{1}{2}\sum\limits_{z\in V} P(x,z)\hat{P}(z,y)+\frac{1}{2}\sum\limits_{z\in V} P_2(x,z) \hat{P}(z,y)\\
     			&=\frac{1}{2}\left(\frac{1}{2}\sum\limits_{z\in V} P(x,z)P(z,y)+\frac{1}{2}\sum\limits_{z\in V} P(x,z)P_2(z,y)\right)\\
     			&+\frac{1}{2}\left(\frac{1}{2}\sum\limits_{z\in V} P_2(x,z)P(z,y)+\frac{1}{2}\sum\limits_{z\in V} P_2(x,z)P_2(z,y)\right)\\
     			&=\frac{1}{4}\left(P_2(x,y)+2P_3(x,y)+P_4(x,y)\right).
     		\end{split}
     	\end{equation*}
     We claim that for all $n\geq 0$,
     	\begin{equation}\label{phatn}
     		\hat{P}_n(x,y)=\frac{1}{2^n}\sum\limits_{m=0}^{n} {n\choose m}P_{n+m}(x,y),
     	\end{equation}
     	where the binomial coefficient ${n\choose m}=\frac{n!}{m!(n-m)!}$.

     Assume that (\ref{phatn}) holds for $n\leq k$, we will show that (\ref{phatn}) is also valid for $n=k+1$. Since
     \begin{align*}
     \hat{P}_{k+1}(x,y)=&\sum_{z\in V}\hat{P}(x,z)\hat{P}_k(z,y)\nonumber\\
     =&\sum_{z\in V}\left(\frac{1}{2}P(x,z)+\frac{1}{2}P_2(x,z)\right)\left(\frac{1}{2^k}\sum_{m=0}^k{k\choose m}P_{k+m}(z,y)\right)\nonumber\\
     =&\frac{1}{2^{k+1}}\sum_{m=0}^k{k\choose m}\sum_{z\in V}P(x,z)P_{k+m}(z,y)\nonumber\\
   &+\frac{1}{2^{k+1}}\sum_{m=0}^k{k\choose m}\sum_{z\in V}P_2(x,z)P_{k+m}(z,y)\nonumber\\
     =&\frac{1}{2^{k+1}}\sum_{m=0}^k{k\choose m}P_{k+1+m}(x,y)+\frac{1}{2^{k+1}}\sum_{m=0}^k{k\choose m}P_{k+2+m}(x,y)\nonumber\\
     =&\frac{1}{2^{k+1}}\sum_{m=0}^k{k\choose m}P_{k+1+m}(x,y)+\frac{1}{2^{k+1}}\sum_{m=1}^{k+1}{k\choose m-1}P_{k+1+m}(x,y),
     \end{align*}
  Combining with Pascal's formula
     $${k\choose m}+{k\choose m-1}={k+1\choose m},$$
we obtain
     \begin{align}
     \hat{P}_{k+1}(x,y)=&\frac{1}{2^{k+1}}P_{k+1}(x,y)+\frac{1}{2^{k+1}}\sum_{m=1}^k\left({k\choose m}+{k\choose m-1}\right)P_{k+1+m}(x,y)\nonumber\\
     &+\frac{1}{2^{k+1}}P_{2k+2}(x,y)\nonumber\\
     =&\frac{1}{2^{k+1}}\sum\limits_{m=0}^{k+1} {k+1\choose m}P_{k+1+m}(x,y).
     \end{align}
     Thus the claim (\ref{phatn}) is valid.

For any \(k \in \mathbb{N}\), define
\begin{align*}
	c_k = \sum_{(m, n) \in A_k} \frac{1}{2^n} \binom{n}{m},
\end{align*}
where
\begin{align}
	A_k = \{(m, n) \mid m + n = k,\, 0 \le m \le n\}.
\end{align}

Then set
\begin{equation*}
	\begin{cases}
		a_k = 2^{2k} c_{2k} = \displaystyle\sum_{m = 0}^{k} 2^m \binom{2k - m}{m}, \\[0.8em]
		\bar{a}_k = 2^{2k + 1} c_{2k + 1} = \displaystyle\sum_{m = 0}^{k} 2^m \binom{2k + 1 - m}{m},
	\end{cases}
\end{equation*}
and
\begin{equation*}
	\begin{cases}
		b_{k + 2} = a_{k + 2} - a_{k + 1}, \\[0.5em]
		\bar{b}_{k + 2} = \bar{a}_{k + 2} - \bar{a}_{k + 1}.
	\end{cases}
\end{equation*}

     	When $k\geq 2$, using Pascal's formula, we obtain
     	\begin{equation*}
     		\begin{split}
     			b_k&=
     			\sum\limits_{m=0}^{k}2^m {2k-m\choose m}-\sum\limits_{m=0}^{k-1}2^m {2k-2-m\choose m}
     			\\
     			&=\sum\limits_{m=0}^{k-1}2^m {2k-2-m\choose m}+\sum\limits_{m=1}^{k-1}2^{m+1} {2k-2-m\choose m-1}
     			\\
     			&\quad+\sum\limits_{m=2}^{k}2^m {2k-2-m\choose m-2}-\sum\limits_{m=0}^{k-1}2^m {2k-2-m\choose m}
     			\\
                &=\sum\limits_{m=1}^{k-1}2^{m+1} {2k-2-m\choose m-1}+\sum\limits_{m=2}^{k}2^m {2k-2-m\choose m-2}\\
     			&=4\left(\sum\limits_{m=0}^{k-2}2^{m} {2k-3-m\choose m}+\sum\limits_{m=0}^{k-2}2^m {2k-4-m\choose m}\right)
     			\\
     			&=4\left(\sum\limits_{m=0}^{k-2}2^{m} {2k-3-m\choose m}+\sum\limits_{m=1}^{k-1}2^m {2k-3-m\choose m-1}\right.
     			\\
     			&\left.\quad\qquad-\sum\limits_{m=0}^{k-2}2^m {2k-4-m\choose m}\right)
     			\\
     			&=4\left(\sum\limits_{m=0}^{k-1}2^{m} {2k-2-m\choose m}-\sum\limits_{m=0}^{k-2}2^m {2k-4-m\choose m}\right)
     			\\
     			&=4 b_{k-1}.
     		\end{split}
     	\end{equation*}
     	Noting $b_2=a_2-a_1=8$, we obtain $b_k=2^{2k-1}$. Hence, for $k\geq 2$,
     $$a_k=a_1+\sum\limits_{m=2}^{k}2^{2m-1}=3+\sum\limits_{m=2}^{k}2^{2m-1}.$$
     	
     	Applying the similar arguments, we derive that
     $$\bar{b}_k=2^{2k},$$
      and
      $$\bar{a}_k=5+\sum\limits_{m=2}^{k}2^{2m} .$$
     	
     	For $k\geq 2$, we deduce
     	\begin{equation*}
     		c_{2k}=\begin{aligned}
     			\frac{1}{2^{2k}}\left(3+\sum\limits_{m=2}^{k}2^{2m-1}\right),
     		\end{aligned}
     	\end{equation*}
     	and
     	\begin{equation*}
     		c_{2k+1}=\begin{aligned}
     			\frac{1}{2^{2k+1}}\left(5+\sum\limits_{m=2}^{k}2^{2m}\right).
     		\end{aligned}
     	\end{equation*}
     	Noting $c_0=c_1=1$, $c_2=\frac{3}{4}$ and $c_3=\frac{5}{8}$, and by induction method, we obtain for any $i\in\mathbb{N}$,
     	\begin{equation}\label{ci}
     		\frac{1}{2}\leq c_i\leq 1.
     	\end{equation}

     	Summing up $\hat{P}_n(x,y)$ for $n$ from $0$ to $l$, we obtain
     	\begin{equation}\label{phsum}
     		\begin{split}
     			\sum\limits_{n=0}^{l} \hat{P}_n(x,y)&=\sum\limits_{n=0}^{l} \frac{1}{2^n}\left(\sum\limits_{m=0}^{n} {n\choose m}P_{n+m}(x,y)\right)\\
     			&=\sum\limits_{i=0}^{2l} \left(\sum\limits_{(m,n)\in B_{i,l}} \frac{1}{2^n} {n\choose m}\right)P_{i}(x,y)\\
     			&=\sum\limits_{i=0}^{l} \left(\sum\limits_{(m,n)\in A_i} \frac{1}{2^n} {n\choose m}\right)P_{i}(x,y)  \\
     			&+\sum\limits_{i=l+1}^{2l} \left(\sum\limits_{(m,n)\in B_{i,l}} \frac{1}{2^n} {n\choose m}\right)P_{i}(x,y),
     		\end{split}
     	\end{equation}
     	where
     $$B_{i,l}=\{(m,n)|m+n=i,\enspace 0\leq m\leq n\leq l\},$$
      thus, $B_{i,l}\subset A_i$, and in particular, $B_{i,l}=A_{i}$ for $i\leq l$.

     	Combining (\ref{phsum}) with the definition of $c_i$, we obtain
     	\begin{equation}\label{4-1}
     		\sum\limits_{n=0}^{l}c_n P_n(x,y)\leq \sum\limits_{n=0}^{l} \hat{P}_n(x,y)\leq  \sum\limits_{n=0}^{2l}c_n P_n(x,y).
     	\end{equation}
     It follows from (\ref{ci}) that
     	\begin{equation*}
     		\frac{1}{2}\sum\limits_{n=0}^{l} P_n(x,y)\leq \sum\limits_{n=0}^{l} \hat{P}_n(x,y)\leq  \sum\limits_{n=0}^{2l} P_n(x,y).
     	\end{equation*}
     By letting $l\to \infty$, and combining with (\ref{equi-v}), we obtain (\ref{gghat}).
     \end{proof}
Under the new weight $\hat{\mu}$ of (\ref{lem3-1}), $(V,\hat{\mu})$ inherits conditions \hyperref[p0]{$(P_0$)} (resp. \hyperref[VD]{$(\text{VD})$}, see
the following proposition.
         \begin{proposition}\label{Prop-equiv}\rm
     	The statements are as follows.
     	\begin{enumerate}
     		\item[(1).]{The condition \hyperref[p0]{$(P_0)$} on $(V,\mu)$ implies \hyperref[D]{$(\Delta)$} on $(V,\hat{\mu})$.}
     		\item[(2).]{The volume doubling property \hyperref[VD]{$(\text{VD})$} on $(V,\mu)$ implies that on $(V,\hat{\mu})$.}
     		\item[(3).]{The Poincar\'{e} inequality \hyperref[VD]{$(\text{PI})$} on $(V,\mu)$ implies that on $(V,\hat{\mu})$.}
     	\end{enumerate}
     \end{proposition}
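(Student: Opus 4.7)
The plan rests on two elementary but decisive identities that follow at once from the definition of $\hat{\mu}$. Summing $\hat{\mu}_{xy}$ over $y$ and using $P(x,\cdot), P_2(x,\cdot)$ are probability measures gives $\hat{\mu}(x) = \mu(x)$, so vertex weights are unchanged. Next, $\hat{\mu}_{xy} > 0$ iff $\mu_{xy} > 0$ or $\mu_{xz}\mu_{zy} > 0$ for some $z$, i.e.\ iff $d(x,y) \le 2$ in $(V,\mu)$. A step in the new graph thus corresponds to a $\mu$-path of length $\le 2$, and conversely any $\mu$-path can be grouped into such hops by taking every second vertex. This yields the clean formula $\hat{d}(x,y) = \lceil d(x,y)/2 \rceil$ and, for integer $r$, the identification $\hat{B}(x_0, r) = B(x_0, 2r)$, so $\hat{V}(x_0, r) = V(x_0, 2r)$.

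With these in hand, part (1) reduces to direct casework. For the loop, connectedness produces some $x' \sim_\mu x$, and then $\hat{\mu}_{xx} \ge \tfrac12 \mu_{xx'}^2/\mu(x') > 0$, so $x \sim_{\hat\mu} x$. If $y \sim_{\hat\mu} x$ and $\mu_{xy} > 0$, the first term of $\hat\mu_{xy}$ together with $\hyperref[p0]{(P_0)}$ gives $\hat{\mu}_{xy}/\hat{\mu}(x) \ge \alpha/2$; otherwise there exists $z$ with $\mu_{xz}, \mu_{zy} > 0$, and two applications of $\hyperref[p0]{(P_0)}$ to the surviving term of the sum yield $\hat{\mu}_{xy}/\hat{\mu}(x) \ge \alpha^2/2$. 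Part (2) is then a one-line computation, iterating $\hyperref[VD]{(\text{VD})}$ on $(V,\mu)$ twice: $\hat{V}(x_0, 2r) = V(x_0, 4r) \lesssim V(x_0, r) = \hat{V}(x_0, r)$.

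The main obstacle is part (3), where the challenge is to match the two distinct radii appearing in the Poincar\'{e} inequality --- the averaging ball and the slightly larger ball on which the Dirichlet energy is taken --- across the two weighted graphs, while keeping the scale factor $r^2$ intact. The trick is to apply the original $\hyperref[PI]{(\text{PI})}$ on $(V, \mu)$ at the ball of radius $2r$ rather than $r$. Indeed, by the ball identification and $\hat{\mu} = \mu$, the average $\hat{f}_B$ over $\hat{B}(x_0, r)$ coincides with $f_{B(x_0, 2r)}$, so the left-hand side of the target $\hyperref[PI]{(\text{PI})}$ on $(V, \hat{\mu})$ is literally $\sum_{x \in B(x_0, 2r)} |f(x) - f_{B(x_0, 2r)}|^2 \mu(x)$. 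The original $\hyperref[PI]{(\text{PI})}$ at radius $2r$ bounds this by $C(2r)^2 \sum_{x,y \in B(x_0, 4r)} \mu_{xy}(f(y)-f(x))^2$, and now two facts close the loop: $B(x_0, 4r) = \hat{B}(x_0, 2r)$ by the ball identification, and $\hat{\mu}_{xy} \ge \tfrac12 \mu_{xy}$ from the first term in the definition of $\hat{\mu}$. Combining these gives exactly $\hyperref[PI]{(\text{PI})}$ for $(V, \hat{\mu})$ with an enlarged but universal constant, completing the proof.
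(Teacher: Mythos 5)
Your proof is correct and follows essentially the same route as the paper: the identities $\hat{\mu}(x)=\mu(x)$, $\hat{B}(x_0,r)=B(x_0,2r)$ and $\hat{\mu}_{xy}\ge \tfrac12\mu_{xy}$, the two-case verification of $(\Delta)$ from $(P_0)$, and the trick of applying $(\text{PI})$ on $(V,\mu)$ at radius $2r$ so that the averaging ball and the average itself match across the two graphs are exactly the paper's argument. The only blemishes are cosmetic: in part (2) the last equality should read $V(x_0,2r)=\hat{V}(x_0,r)$ rather than $V(x_0,r)=\hat{V}(x_0,r)$ (the chain still closes since $V(x_0,r)\le V(x_0,2r)$), and for non-integer $r$ one should insert floors $\lfloor r\rfloor$ as the paper does.
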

     \begin{proof}
     	(1). For any $x\in V$,  note that
     	$$\{y:y\sim x \enspace\text{on}\enspace (V,\hat{\mu}) \}=S(x,1)\cup S(x,2)\cup\{x\},$$
     where $S(x,n)=\{y:d(x,y)=n\}$ for $n=1, 2$.
     	
     	From condition \hyperref[p0]{$(P_0)$} on $(V,\mu)$, we have
     	\begin{equation*}
     		\frac{\hat{\mu}_{xy}}{\hat{\mu}(x)}\geq \frac{1}{2}\frac{\mu_{xy}}{\mu(x)}\geq \frac{1}{2}\alpha, \quad \text{for $y\in S(x,1)$},
     	\end{equation*}
     	and
     	\begin{equation*}
     		\frac{\hat{\mu}_{xy}}{\hat{\mu}(x)}\geq \frac{1}{2}\sum\limits_{z\in V}\frac{\mu_{xz}\mu_{zy}}{\mu(x)\mu(z)} \geq \frac{1}{2} \alpha^2, \quad \text{for $y\in S(x,2)\cup\{x\}$}.
     	\end{equation*}
     	Hence condition \hyperref[D]{$(\Delta)$} is satisfied on $(V,\hat{\mu})$.

     	(2). Note that a path of length $n$ on $(V,\hat{ \mu})$  corresponds a path on $(V,\mu)$ whose length is less  than $2n$, and a path of length $2n$ or $2n-1$ on $(V,\mu)$ corresponds a path of length $n$ on $(V,\hat{ \mu})$. Hence for all $x\in V$ and $n\in\mathbb{N}$, we have
     	\begin{equation*}
     		 \hat{B} (x,n)= B(x,2n),
     	\end{equation*}
   It follows that \hyperref[VD]{$(\text{VD})$} is satisfied on $(V,\hat{\mu})$.

     	(3). By the Poincar\'{e} inequality \hyperref[VD]{$(\text{PI})$} on $(V,\mu)$, we obtain that for all $r>0$, $x_0\in V$ and $f\in \ell(V)$,
     	\begin{equation*}
     		\begin{split}
     			\sum\limits_{x\in \hat{B}(x_0,r)}|f(x)-f_{\hat{B}}|^2\hat{\mu}(x)
     			&\leq\sum\limits_{x\in \hat{B}(x_0,\lfloor r \rfloor)} |f(x)-f_{\hat{B}}|^2\hat{\mu}(x)\\
     			&\leq\sum\limits_{x\in B(x_0, 2\lfloor r \rfloor)}|f(x)-f_{B}|^2\mu(x)\\
     			&\lesssim \lfloor r \rfloor^2\sum\limits_{x,y\in B(x_0,4\lfloor r \rfloor)}\mu_{xy}(f(y)-f(x))^2\\
     			&\lesssim \lfloor r \rfloor^2\sum\limits_{x,y\in \hat{B}(x_0,2\lfloor r \rfloor)}\hat{\mu}_{xy}(f(y)-f(x))^2\\
     			&\lesssim  r^2\sum\limits_{x,y\in \hat{B}(x_0,2r)}\hat{\mu}_{xy}(f(y)-f(x))^2  ,
     		\end{split}
     	\end{equation*}
     	where
     	$$f_{B}=\frac{1}{V(x_0,2\lfloor r \rfloor)}\sum\limits_{x\in B(x_0,2\lfloor r \rfloor)}f(x)\mu(x)=\frac{1}{\hat{V}(x_0,\lfloor r \rfloor)}\sum\limits_{x\in \hat{B}(x_0,\lfloor r \rfloor)}f(x)\hat{\mu}(x)=f_{\hat{B}}.$$
     where $\lfloor r \rfloor$ is greatest integer function (or floor function).
     Thus, we complete the proof.
     \end{proof}
      Now we are ready to give the estimate of Green function.
\begin{theorem}\label{tmg}\rm
	 Assume conditions \hyperref[VD]{$(\text{VD})$}, \hyperref[PI]{$(\text{PI})$} and \hyperref[p0]{$(P_0)$} are satisfied on $(V,\mu)$. Then
	\begin{equation} \label{bdg}
	 g(x,y) \simeq\sum\limits_{n=d(x,y)}^{\infty}\frac{n}{V(x,n)}.
	\end{equation}
\end{theorem}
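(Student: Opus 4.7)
The plan is to transfer the problem to the modified graph $(V,\hat\mu)$ of Lemma \ref{gg-rel}, where Delmotte's Gaussian estimate (Theorem \ref{tmp}) applies, and then sum the Gaussian bounds term-by-term. By Proposition \ref{Prop-equiv}, the weighted graph $(V,\hat\mu)$ satisfies $(\Delta)$, $(\text{VD})$, and $(\text{PI})$, so Theorem \ref{tmp} yields two-sided Gaussian bounds for $\hat p_n(x,y)$ in terms of $\hat V(x,\sqrt n)$ and $\hat d(x,y)$. From $\hat B(x,n)=B(x,2n)$ established in the proof of Proposition \ref{Prop-equiv}, together with $(\text{VD})$ on $(V,\mu)$, one gets $\hat V(x,r)\simeq V(x,r)$ and $\hat d(x,y)\simeq d(x,y)$. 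Lemma \ref{gg-rel} then reduces the problem to estimating $\hat g(x,y)=\sum_{n\ge 0}\hat p_n(x,y)$.

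Setting $R=d(x,y)$, I would split the series at $n=R^2$. In the ``on-diagonal'' regime $n\geq R^2$ the Gaussian factor $e^{-cR^2/n}$ is bounded above and below by constants, so the two-sided estimate gives
\begin{equation*}
\sum_{n\ge R^2}\hat p_n(x,y)\simeq \sum_{n\ge R^2}\frac{1}{V(x,\sqrt n)}.
\end{equation*}
Grouping $n\in[k^2,(k+1)^2]$ (an interval of length $\simeq k$) and invoking $(\text{VD})$ to replace $V(x,\sqrt n)$ by $V(x,k)$, this last sum is comparable to $\sum_{k=R}^{\infty} k/V(x,k)$, matching the right-hand side of \eqref{bdg}. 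This already produces the desired lower bound for $\hat g(x,y)$.

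The remaining task is to show that the ``off-diagonal'' part $\sum_{\hat d(x,y)\le n<R^2}\hat p_n(x,y)$ is absorbed by the on-diagonal part, yielding the matching upper bound. The plan is a dyadic decomposition over the blocks $n\in[R^2/2^{k+1},R^2/2^k]$ with $k\ge 0$: on each block the Gaussian weight contributes at most $e^{-c2^k}$, while $(\text{VD})$ on $(V,\mu)$ provides a polynomial control $V(x,R)\le C_0^{(k+1)/2}V(x,\sqrt n)$, i.e.\ $1/V(x,\sqrt n)\lesssim C_0^{(k+1)/2}/V(x,R)$. Summing the dyadic blocks, the exponential decay in $k$ dominates the polynomial growth, giving
\begin{equation*}
\sum_{\hat d(x,y)\le n<R^2}\hat p_n(x,y)\lesssim \frac{R^2}{V(x,R)},
\end{equation*}
and by $(\text{VD})$ the right-hand side is in turn $\lesssim \sum_{k=R}^{2R}k/V(x,k)$, which is a single dyadic chunk already contained in the on-diagonal bound.

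I expect the main technical nuisance to be the dyadic tail estimate: one has to book-keep carefully so that the polynomial volume-doubling factor is beaten by the Gaussian decay $e^{-c2^k}$ and the resulting constant does not depend on $R$. Once that estimate is in place, combining the on-diagonal asymptotics with the absorbed tail yields $\hat g(x,y)\simeq\sum_{n\ge R}n/V(x,n)$, and Lemma \ref{gg-rel} together with $\hat V\simeq V$ and $\hat d\simeq d$ transfers the estimate back to $g(x,y)$, completing the proof of \eqref{bdg}.
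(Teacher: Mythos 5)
Your proposal is correct and follows essentially the same route as the paper: pass to the modified graph $(V,\hat{\mu})$ via Proposition \ref{Prop-equiv} and Lemma \ref{gg-rel}, apply Delmotte's two-sided Gaussian bound, split the Green series at the square of the distance, handle the on-diagonal tail by grouping $n$ into quadratic intervals, and absorb the off-diagonal part by a dyadic decomposition in which the Gaussian decay $e^{-c2^k}$ beats the volume-doubling factor. The only (immaterial) differences are that the paper splits at $\hat{d}^2$ rather than $d^2$ and bounds the off-diagonal sum directly by $\sum_{n\geq\hat{d}^2}1/\hat{V}(x,\sqrt{n})$ instead of by $R^2/V(x,R)$.
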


    \begin{proof}
    Let $\hat{d}(x,y)$ be the distance function on $(V,\hat{\mu})$.
   Denote $\hat{d}:=\hat{d}(x,y)$, and $d:=d(x,y)$.
    	and note  that $\hat{p}_n(x,y)=0$ when $n<\hat{d}$.
    Since conditions \hyperref[VD]{$(\text{VD})$}, \hyperref[PI]{$(\text{PI})$} and \hyperref[p0]{$(P_0)$} are satisfied on $(V,\mu)$,
    by Proposition \ref{Prop-equiv}, we know conditions \hyperref[VD]{$(\text{VD})$}, \hyperref[PI]{$(\text{PI})$} and \hyperref[D]{$(\Delta)$} also hold on $(V,\hat{\mu})$.
    Then combining with Theorem \ref{tmp}, we obtain
    	\begin{equation}\label{c-1}
    		\sum\limits_{n=\hat{d}}^{\infty}\frac{c_l}{\hat{V}(x,\sqrt{n})}e^{-\frac{C_l\hat{d}^2}{n}}\leq \hat{g}(x,y) \leq
    		\sum\limits_{n=\hat{d}}^{\infty}\frac{c_r}{\hat{V}(x,\sqrt{n})}e^{-\frac{C_r\hat{d}^2}{n}}
    	\end{equation}
    	
    	Indeed, we have
    	\begin{align}\label{c-2}
    			\sum\limits_{n=\hat{d}^2}^{\infty}\frac{c_l}{\hat{V}(x,\sqrt{n})}e^{-\frac{C_l\hat{d}^2}{n}}	
    			&\gtrsim \sum\limits_{i=0}^{\infty}\sum\limits_{n=(\hat{d}+i)^2+1}^{(\hat{d}+i+1)^2} \frac{1}{\hat{V}(x,\sqrt{n})}\nonumber\\
    			&\gtrsim 	\sum\limits_{i=0}^{\infty} \frac{2(\hat{d}+i)+1}{\hat{V}(x,\hat{d}+i+1)}  \nonumber \\
    			&\gtrsim 			\sum\limits_{i=0}^{\infty} \frac{\hat{d}+i+1}{\hat{V}(x,\hat{d}+i+1)}
    			=\sum\limits_{n=\hat{d}+1}^{\infty} \frac{n}{\hat{V}(x,n)}.	
    	\end{align}
    	Obviously, \hyperref[VD]{$(\text{VD})$} implies that
    $$\frac{n}{\hat{V}(x,n)}\lesssim\frac{2n}{\hat{V}(x,2n)}.$$
    	
    Combining the above with (\ref{c-1})
    	and (\ref{c-2}),
   and by Lemma \ref{gg-rel}, we derive the lower bound of (\ref{bdg}) by
    	\begin{equation*}
    		\begin{split}
    			g(x,y)&\geq \hat{g}(x,y)\gtrsim \sum\limits_{n=\hat{d}+1}^{\infty} \frac{n}{\hat{V}(x,n)}\\
    			&\gtrsim  \sum\limits_{n=\hat{d}+1}^{\infty} \frac{n}{\hat{V}(x,n)} + \frac{2\hat{d}}{\hat{V}(x,2\hat{d})}\\
    			& \gtrsim \sum\limits_{n=\hat{d}}^{\infty} \frac{n}{\hat{V}(x,n)} \gtrsim \sum\limits_{n=\hat{d}}^{\infty} \frac{n}{V(x,2n)}\\
    			&\gtrsim \sum\limits_{n=d}^{\infty} \frac{n}{V(x,n)},
    		\end{split}
    	\end{equation*}
    where we have used that \hyperref[VD]{$(\text{VD})$}, and
    \begin{align}\label{dd-hat}
    \hat{d}(x,y)\leq d(x,y)\leq 2\hat{d}(x,y).
    \end{align}
    	
    	Similarly, for the upper bound of (\ref{bdg}), combining with Theorem (\ref{tmp}), we have
    	\begin{align}\label{pp-1}
		     	\sum\limits_{n=\hat{d}^2+1}^{\infty} \hat{p}_n(x,y)&\lesssim
		     	\sum\limits_{n=\hat{d}^2}^{\infty}\frac{1}{\hat{V}(x,\sqrt{n})}
	      	\lesssim \sum\limits_{i=0}^{\infty}\sum\limits_{n=(\hat{d}+i)^2}^{(\hat{d}+i+1)^2-1} \frac{1}{\hat{V}(x,\sqrt{n})}\nonumber\\
	    	&\lesssim	\sum\limits_{i=0}^{\infty} \frac{2(\hat{d}+i)+1}{\hat{V}(x,\hat{d}+i)} \lesssim  \sum\limits_{n=\hat{d}+1}^{\infty} \frac{n}{\hat{V}(x,n)}.		
       \end{align}
    	Then it suffices to prove
    	\begin{equation}\label{c-3}
    		\sum\limits_{n=\hat{d}}^{\hat{d}^2} \hat{p}_n(x,y) \lesssim \sum\limits_{n=\hat{d}}^{\infty} \frac{n}{\hat{V}(x,n)}.
    	\end{equation}
Assuming first \eqref{c-3} holds, and by combining it with \eqref{gghat} and \eqref{pp-1}, we obtain
    \begin{align*}
    		g(x,y)\leq 2\hat{g}(x,y)&\lesssim \sum\limits_{n=\hat{d}}^{\infty} \frac{n}{\hat{V}(x,n)}\lesssim\sum\limits_{n=\hat{d}}^{\infty} \frac{n}{V(x,n)}\\
    		&\lesssim\sum\limits_{n=\hat{d}}^{d} \frac{n}{V(x,n)}+\sum\limits_{n=d}^{\infty} \frac{n}{V(x,n)}.
    	\end{align*}
   Using the volume doubling property \hyperref[VD]{\((\mathrm{VD})\)}, we have
    	\begin{align*}
    		\sum\limits_{n=\hat{d}}^{d} \frac{n}{V(x,n)}\le \sum\limits_{n=2\hat{d}}^{2d} \frac{n}{V(x,\lfloor\frac{n}{2} \rfloor)}\lesssim \sum\limits_{n=d}^{2d} \frac{n}{V(x,n)}.
    	\end{align*}
    	Hence
    	\begin{align*}
    		g(x,y)\lesssim \sum\limits_{n=d}^{\infty} \frac{n}{V(x,n)}.
    	\end{align*}
%
    	To prove (\ref{c-3}), without loss of generality,  assume that $\hat{d}\geq 1$, let us define
    $$t=\text{min} \{m:2^m\geq \hat{d}, m\in \mathbb{N}\} ,$$
    	and for $0\leq l \leq t$, define
    $$s_l=\lfloor \frac{\hat{d}^2}{2^l}\rfloor.$$
    	
    	Combining with (\ref{GSe}), we obtain
    \begin{align*}
    			\sum\limits_{n=\hat{d}}^{\hat{d}^2} \hat{p}_n(x,y)
    			&\leq \sum\limits_{n=\hat{d}}^{\hat{d}^2}\frac{c_r}{\hat{V}(x,\sqrt{n})}e^{-\frac{C_r\hat{d}^2}{n}}  \nonumber \\
                &\lesssim \sum\limits_{l=0}^{t} \sum\limits_{n=s_{l+1}}^{s_{l}+1} \frac{1}{\hat{V}(x,\sqrt{n})} e^{-\frac{C_r\hat{d}^2}{n}} \nonumber \\
    			&\lesssim \sum\limits_{l=0}^{t} (\frac{\hat{d}^2}{2^{l+1}}+1) e^{-\frac{C_r\hat{d}^2}{s_{l}+1}} \frac{1}{\hat{V}(x,\sqrt{s_{l+1}})}
    			\nonumber  \\
    			&\lesssim \sum\limits_{l=0}^{t} (\frac{\hat{d}^2}{2^{l+1}}+1) e^{-C_r 2^l} \frac{1}{\hat{V}(x,\sqrt{\frac{\hat{d}^2}{2^{l+2}}})}.
    	\end{align*}
    	By	using \hyperref[VD]{$(\text{VD})$}, we have
    	\begin{equation*}\label{c-4}
    		\frac{1}{\hat{V}(x,\sqrt{n})}\leq \frac{C_1^l}{\hat{V}(x,2^l\sqrt{n})}.
    	\end{equation*}
    	Hence
       	\begin{align*}
    			\sum\limits_{n=\hat{d}}^{\hat{d}^2} \hat{p}_n(x,y)
    			&\lesssim \sum\limits_{l=0}^{t} 2^{l}\hat{d}^2 e^{-C_r 2^l} C_1^{l+2} \frac{1}{\hat{V}(x,\sqrt{2^{l+2}\hat{d}^2})}\nonumber\\
    			&\lesssim \sum\limits_{l=0}^{t} e^{-C_r 2^l} C_1^{l+2}
    			\sum\limits_{n=2^{l+1} \hat{d}^2+1}^{2^{l+2} \hat{d}^2} \frac{1}{\hat{V}(x,\sqrt{n})}\nonumber\\
    			&\lesssim  \sum\limits_{l=0}^{t} e^{-C_r (2^l-k(l+2))}
    			\sum\limits_{n=2^{l+1} \hat{d}^2+1}^{2^{l+2} \hat{d}^2} \frac{1}{\hat{V}(x,\sqrt{n})},
    	\end{align*}
    	where $k:=\text{min} \{m:e^{C_r m}\geq C_1 \}$.

    Noting that the function $2^x-k(x+2)$ is bounded from below by the constant $c_0$,
    we obtain
    \begin{equation*}
    		\begin{split}
    			\sum\limits_{n=\hat{d}}^{\hat{d}^2} \hat{p}_n(x,y)&\lesssim  e^{-C_r c_0}
    			\sum\limits_{n=\hat{d}^2}^{\infty} \frac{1}{\hat{V}(x,\sqrt{n})}\\
    			&\lesssim  \sum\limits_{i=0}^{\infty}\sum\limits_{n=(\hat{d}+i)^2}^{(\hat{d}+i+1)^2-1}
    			\frac{1}{\hat{V}(x,\sqrt{n})}\\
    			& \lesssim \sum\limits_{i=0}^{\infty} \frac{2(\hat{d}+i)+1}{\hat{V}(x,\hat{d}+i)}\\
    			& \lesssim \sum\limits_{n=\hat{d}}^{\infty} \frac{n}{\hat{V}(x,n)}.
    		\end{split}
    	\end{equation*}
%
 By using the similar argument as in (\ref{pp-1}), we obtain
    	\begin{equation*}
    		\begin{split}
    			\sum\limits_{n=\hat{d}}^{\hat{d}^2} \hat{p}_n(x,y)
    			& \lesssim \sum\limits_{n=\hat{d}}^{\infty} \frac{n}{\hat{V}(x,n)}.
    		\end{split}
    	\end{equation*}
 The upper bound of (\ref{bdg}) follows. Hence, we complete the proof.
    \end{proof}

    \begin{theorem}\label{tvcon2}\rm
	        Let $1<p<\infty$. Assume that  conditions  \hyperref[VD]{$(\text{VD})$}, \hyperref[PI]{$(\text{PI})$} and \hyperref[p0]{$(P_0)$}
are satisfied on $(V,\mu)$. Then $(V,\mu)$ is $L^p$-parabolic if and only if for some $o\in V$,
	        \begin{equation}\label{4-7-1}
	        	\sum\limits^{\infty}_{n=0} \left(\sum\limits^{\infty}_{m=n} \frac{m}{V(o,m)}\right)^{q}V(S(o,n))=\infty.
	        \end{equation}
where $S(o,n)=\left\{x\in V| d(x,o)=n\right\}$.
     \end{theorem}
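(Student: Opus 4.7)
The plan is to combine Theorem \ref{theo1} with the two-sided Green function estimate of Theorem \ref{tmg}, and then reorganize the resulting sum by grouping vertices on spheres around $o$.

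First I would invoke Theorem \ref{theo1}, specifically the equivalence (a)$\Leftrightarrow$(d): $(V,\mu)$ is $L^p$-parabolic if and only if $g_q(x,y)=\infty$ for some (equivalently, by Lemma \ref{lem1}, every) pair $(x,y)\in V\times V$. Taking $x=y=o$ and using the symmetry $g(z,o)=g(o,z)$, this reduces the question to whether
\begin{equation*}
g_q(o,o)=\sum_{z\in V} g(o,z)^{q}\mu(z)
\end{equation*}
is infinite. This rewrites the problem purely in terms of the Green function.

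Next, under the assumptions \hyperref[VD]{$(\text{VD})$}, \hyperref[PI]{$(\text{PI})$}, and \hyperref[p0]{$(P_0)$}, Theorem \ref{tmg} provides the two-sided estimate
\begin{equation*}
g(o,z)\simeq \sum_{n=d(o,z)}^{\infty}\frac{n}{V(o,n)}.
\end{equation*}
Substituting into the expression for $g_q(o,o)$ yields
\begin{equation*}
g_q(o,o)\simeq \sum_{z\in V}\left(\sum_{n=d(o,z)}^{\infty}\frac{n}{V(o,n)}\right)^{q}\mu(z).
\end{equation*}

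Finally I would partition $V=\bigsqcup_{k=0}^{\infty} S(o,k)$ and observe that for $z\in S(o,k)$ the inner series depends only on $k$. Summing $\mu(z)$ over each sphere gives $V(S(o,k))$, so
\begin{equation*}
g_q(o,o)\simeq \sum_{k=0}^{\infty}\left(\sum_{n=k}^{\infty}\frac{n}{V(o,n)}\right)^{q}V(S(o,k)).
\end{equation*}
Hence $g_q(o,o)=\infty$ is equivalent to the divergence in \eqref{4-7-1}, and combining with the reduction from Theorem \ref{theo1} completes the proof. There is no serious obstacle here: Theorem \ref{tmg} does all the analytic work, and the remaining step is a straightforward rearrangement; one only needs to be careful that the $\simeq$ constants from Theorem \ref{tmg} are uniform in $z$ (which they are, as they depend only on the structural constants in \hyperref[VD]{$(\text{VD})$}, \hyperref[PI]{$(\text{PI})$}, \hyperref[p0]{$(P_0)$}), so that raising to the power $q$ and summing preserves the equivalence.
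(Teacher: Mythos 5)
Your proposal is correct and follows essentially the same route as the paper: reduce $L^p$-parabolicity to the divergence of $g_q(o,o)$ via Theorem \ref{theo1}(a)$\Leftrightarrow$(d), substitute the two-sided estimate of Theorem \ref{tmg}, and regroup the sum over spheres $S(o,n)$. Your added remark about the uniformity of the $\simeq$ constants (needed to raise to the power $q$ and sum) is a point the paper leaves implicit.
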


%
%
%

\begin{proof}
Noting that
\begin{align*}
g_q(o,o)=\sum_{z\in V}g(z,0)^q\mu(z),
\end{align*}
and combining with Theorem \ref{tmg}, we obtain
\begin{equation*}
	\begin{split}
			 g_q(o,o)\simeq\sum\limits^{\infty}_{n=0} \left(\sum\limits^{\infty}_{m=n} \frac{m}{V(o,m)}\right)^{q}V(S(o,n)),
	\end{split}
\end{equation*}
By Theorem \ref{theo1}, we obtain that (\ref{4-7-1}) is equivalent to $L^p$-parabolicity. Thus, the proof is complete.
\end{proof}

\begin{theorem}\label{vcond}\rm
	Let $1<p<\infty$.  Assume that  conditions  \hyperref[VD]{$(\text{VD})$}, \hyperref[PI]{$(\text{PI})$} and \hyperref[p0]{$(P_0)$}
are satisfied on $(V,\mu)$. If for
	some $o\in V$,
	\begin{equation}
		\sum\limits^{\infty}_{n=0}n \left(\sum\limits^{\infty}_{m=n} \frac{m}{V(o,m)}\right)^{\frac{1}{p-1}}=\infty,
	\end{equation}
	then $(V,\mu)$ is $L^p$-parabolic.
\end{theorem}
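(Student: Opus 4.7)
The plan is to reduce Theorem \ref{vcond} to the sharp two-sided criterion of Theorem \ref{tvcon2}. Set $a_n := \sum_{m=n}^{\infty} \frac{m}{V(o,m)}$, and note that $q-1 = \frac{1}{p-1}$, so the hypothesis reads $\sum_{n} n\, a_n^{q-1} = \infty$, while the conclusion I need, via Theorem \ref{tvcon2}, is $\sum_{n} a_n^q\, V(S(o,n)) = \infty$. As in Section \ref{volume}, I may assume $(V,\mu)$ is non-parabolic, since otherwise $L^p$-parabolicity is automatic via Corollary \ref{st-parabolic}; together with the Nash--Williams test \eqref{votest}, this forces $a_0 < \infty$, so $\{a_n\}$ is a finite, non-increasing sequence satisfying the key difference identity $a_n - a_{n+1} = n/V(o,n)$.

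The core of the argument is discrete Abel summation combined with convexity of $t \mapsto t^q$. Writing $V(S(o,n)) = V(o,n) - V(o,n-1)$ with the convention $V(o,-1)=0$ and summing by parts, I obtain
\begin{align*}
\sum_{n=0}^{N} a_n^q\, V(S(o,n)) = a_N^q\, V(o,N) + \sum_{n=0}^{N-1} \bigl(a_n^q - a_{n+1}^q\bigr) V(o,n) \geq \sum_{n=0}^{N-1} \bigl(a_n^q - a_{n+1}^q\bigr) V(o,n).
\end{align*}
Since $q>1$, the tangent-line inequality $x^q - y^q \geq q\, y^{q-1}(x-y)$ for $x \geq y \geq 0$ yields
\begin{align*}
a_n^q - a_{n+1}^q \geq q\, a_{n+1}^{q-1} (a_n - a_{n+1}) = q\, a_{n+1}^{q-1} \cdot \frac{n}{V(o,n)}.
\end{align*}
Substituting this, the factor $V(o,n)$ cancels, and passing to $N \to \infty$ gives
\begin{align*}
\sum_{n=0}^{\infty} a_n^q\, V(S(o,n)) \geq q \sum_{n=0}^{\infty} n\, a_{n+1}^{q-1} = q \sum_{n=1}^{\infty} (n-1)\, a_n^{q-1}.
\end{align*}
Since $(n-1)\,a_n^{q-1} \geq \tfrac{1}{2}\, n\, a_n^{q-1}$ for $n \geq 2$, the hypothesis $\sum_n n\, a_n^{q-1} = \infty$ immediately forces the right-hand side to diverge, and Theorem \ref{tvcon2} concludes that $(V,\mu)$ is $L^p$-parabolic.

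The only subtlety I anticipate is the bookkeeping around the index shift from $a_{n+1}^{q-1}$ to $a_n^{q-1}$ and the preliminary reduction ruling out $a_n \equiv \infty$; both are dispatched in a line, and no further geometric input is needed since the heavy lifting, namely the Green function estimate \eqref{bdg} and its translation into the volume series criterion, has already been carried out in Theorems \ref{tmg} and \ref{tvcon2}.
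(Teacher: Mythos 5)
Your proposal is correct and follows essentially the same route as the paper: the same Abel summation against $V(S(o,n))=V(o,n)-V(o,n-1)$, the same convexity (mean value) inequality $a_n^q-a_{n+1}^q\geq q\,a_{n+1}^{q-1}(a_n-a_{n+1})$ with the telescoping identity $a_n-a_{n+1}=n/V(o,n)$ cancelling $V(o,n)$, and the same appeal to Theorem \ref{tvcon2}. The only cosmetic difference is that you justify $a_0<\infty$ via the Nash--Williams test rather than via the Green function bound of Theorem \ref{tmg}, and you make the harmless index shift from $a_{n+1}$ to $a_n$ explicit.
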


    \begin{proof}
Noting  that $(V,\mu)$ is non-parabolic, hence for fixed $y\in V$, we obtain from Theorem \ref{tmg} that
	$$ \sum\limits^{\infty}_{m=0} \frac{m}{V(o,m)}\lesssim g(o,y) <\infty.$$
	Set
$$a_n=\sum\limits^{\infty}_{m=n} \frac{m}{V(o,m)},\quad\mbox{ for $n\geq 0$}.$$
It is clear that $\{ a_n\}$ is a decreasing sequence.
	
	Then for any positive integer $l$, we have
	\begin{equation*}
		\begin{split}
			\sum\limits^{l}_{n=0} a_n^{q}V(S(o,n))
			&\geq 	\sum\limits^{l}_{n=1} a_n^{q}\left(V(o,n)-V(o,n-1)\right)+a_n^{q}V(o,0) \\
			&=\sum\limits^{l}_{n=0} a_n^{q}V(o,n)-\sum\limits^{l}_{n=1} a_n^{q}V(o,n-1)\\
			&\geq\sum\limits^{l}_{n=0} (a_n^{q}-a_{n+1}^{q})V(o,n)\\
			&\geq q \sum\limits^{l}_{n=0}a_{n+1}^{q-1}(a_n-a_{n+1})V(o,n)\\
			&\gtrsim \sum\limits^{l}_{n=0}n a_{n+1}^{q-1},
		\end{split}
	\end{equation*}
	where we have used the mean value Theorem.

    By letting $l\to \infty$, we get
    \begin{equation*}
    	\sum\limits^{\infty}_{n=0} \left(\sum\limits^{\infty}_{m=n} \frac{m}{V(o,m)}\right)^{q}V(S(o,n))\gtrsim \sum\limits^{\infty}_{n=0}n \left(\sum\limits^{\infty}_{m=n} \frac{m}{V(o,m)}\right)^{\frac{1}{p-1}}.
    \end{equation*}
     Then by Theorem \ref{tvcon2}  we complete the proof.
   \end{proof}

   \begin{corollary}\label{co2}\rm
   		Let $1<p<\infty$.  Assume that  conditions  \hyperref[VD]{$(\text{VD})$}, \hyperref[PI]{$(\text{PI})$} and \hyperref[p0]{$(P_0)$}
    are satisfied on $(V,\mu)$. If there exists some $o\in V$ such that
   	$$V(o,r)\lesssim r^{2p}(\log{r})^{p-1},\quad\mbox{for all large enough $r$},$$
   	then $(V,\mu)$ is $L^p$-parabolic.
   \end{corollary}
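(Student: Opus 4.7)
The plan is to reduce the statement to Theorem \ref{vcond}, which gives $L^p$-parabolicity as soon as
\[
\sum_{n=0}^{\infty} n \left(\sum_{m=n}^{\infty} \frac{m}{V(o,m)}\right)^{\frac{1}{p-1}} = \infty.
\]
Given the hypothesis $V(o,m)\lesssim m^{2p}(\log m)^{p-1}$ for large $m$, the first step is the pointwise comparison $\frac{m}{V(o,m)}\gtrsim \frac{1}{m^{2p-1}(\log m)^{p-1}}$, which reduces the verification to showing that the series $\sum_{n} n\bigl(\sum_{m\geq n}\frac{1}{m^{2p-1}(\log m)^{p-1}}\bigr)^{\frac{1}{p-1}}$ diverges.

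Next I would establish the tail asymptotic
\[
\sum_{m=n}^{\infty}\frac{1}{m^{2p-1}(\log m)^{p-1}} \simeq \frac{1}{n^{2p-2}(\log n)^{p-1}}
\]
for large $n$. Since $p>1$, we have $2p-1>1$, so the series is convergent, and the natural route is integral comparison combined with a single integration by parts on $\int_n^{\infty} x^{1-2p}(\log x)^{-(p-1)}\,dx$, differentiating $(\log x)^{-(p-1)}$ and integrating $x^{1-2p}$. The boundary contribution is precisely $\frac{1}{(2p-2) n^{2p-2}(\log n)^{p-1}}$, and the remainder integral is of the form $\int_n^{\infty} x^{1-2p}(\log x)^{-p}\,dx$, which is smaller by a factor of $\log n$ and can be absorbed into the left-hand side for $n$ large enough.

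Raising to the power $\frac{1}{p-1}$ gives
\[
\left(\sum_{m=n}^{\infty}\frac{m}{V(o,m)}\right)^{\frac{1}{p-1}} \gtrsim \frac{1}{n^{2}\log n},
\]
so that the $n$-th term of the target series is bounded below by a constant multiple of $\frac{1}{n\log n}$. Since $\sum_{n}\frac{1}{n\log n}=\infty$, Theorem \ref{vcond} applies and yields $L^p$-parabolicity of $(V,\mu)$.

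The only real obstacle is the careful bookkeeping in the integration-by-parts step: one must verify that the error term truly admits a uniform bound of the form $\frac{C}{\log n}$ times the main term, so that absorbing it preserves both the $\simeq$ relation and the correct asymptotic constant. Everything else is a mechanical chain of inequalities ending in the classical divergence of $\sum\frac{1}{n\log n}$.
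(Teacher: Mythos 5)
Your proposal is correct and follows exactly the route the paper intends: the corollary is stated as an immediate consequence of Theorem \ref{vcond}, and you verify its divergence criterion by bounding the tail $\sum_{m\geq n} m/V(o,m)\gtrsim n^{-(2p-2)}(\log n)^{-(p-1)}$ and reducing to the divergence of $\sum 1/(n\log n)$. The only simplification worth noting is that you need only the lower bound on the tail (e.g.\ by keeping just the terms $n\leq m\leq 2n$), so the integration-by-parts bookkeeping for the full two-sided asymptotic is not actually required.
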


   \section{Examples}\label{examples}

   Let us now introduce a class of graphs known as Cayley graphs. Assume that $G$ is a group and $S\subset G$ is a subset, which satisfies that if $s\in S$, then $s^{-1}\in S$. Such subset $S$ is called symmetric.

   The group $G$ and subset $S$ determines a graph $(V, E)$ as follows:
  the set $V$ of vertices coincides with $G$, and the set of edges $E$ is defined by
    $x\sim y$ if and only if $x^{-1}y\in S$.
   The edge weight is defined by
   \begin{equation*}
    \mu_{xy}=	\left\{
   	\begin{array}{lr}
   		\frac{1}{|S|} \quad \mbox{when $x^{-1}y\in S$},\nonumber\\
   		 0 \qquad \mbox{otherwise},
   	\end{array}
   	\right.
   \end{equation*}
   which implies $\mu(x)=1$ for all $x\in G$.
  It is clear that if the neutral element $e\in S$, every vertex in the graph $(V, E)$ contains a loop, otherwise, the graph contains no loop.

Moreover, since $\text{deg}(x)=|S|$ for every $x\in V$,  hence $V(x,n)=V(e,n)$ for any $x\in G$.

   \begin{definition}\rm
  Let $(V,\mu)$ be an infinite graph, if there exist a constant  $D> 0$ and a vertex $o \in V$ such that
   	\begin{align}\label{6-1}
   		V(o, r)\simeq r^{D},\quad\mbox{for all $r>0$}.
   	\end{align}
   	Then we call $(V,\mu)$ has polynomial growth.
   \end{definition}

    \begin{proposition}\rm
    	Let $(G,\mu)$ and $(G,\mu^\prime)$ be an infinite Cayley graph generated by finite set $S$ and $S'$ respectively. If $(G,\mu)$ has polynomial growth, then $(G,\mu')$ also has polynomial growth.
    \end{proposition}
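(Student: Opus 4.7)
The plan is to reduce the statement to the standard fact that for a finitely generated group, any two finite symmetric generating sets $S, S'$ yield bi-Lipschitz equivalent word metrics on $G$, and then to transfer the polynomial volume growth through this equivalence. Since each Cayley graph structure forces $\mu(x) = \mu'(x) = 1$ for all $x \in G$, volumes coincide with cardinalities of metric balls, so only the ball inclusions matter.

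First I would establish the metric comparison. Both $S$ and $S'$ are symmetric and (by connectedness of the Cayley graphs) generate $G$. Hence every $s' \in S'$ can be written as a finite product of elements of $S$. Since $S'$ is finite, the constant
\[
C := \max_{s' \in S'} d_{S}(e, s')
\]
is finite. Subadditivity of the word metric then gives $d_{S}(x, y) \le C\, d_{S'}(x, y)$ for all $x, y \in G$. By symmetry of the roles of $S$ and $S'$, there is likewise a constant $C'$ with $d_{S'}(x, y) \le C'\, d_{S}(x, y)$.

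Next I would translate this bi-Lipschitz comparison to inclusions of metric balls centered at the neutral element $e$:
\[
B_{S'}(e, r) \subset B_{S}(e, C' r), \qquad B_{S}(e, r) \subset B_{S'}(e, C r).
\]
Since $\mu \equiv \mu' \equiv 1$, we have $V(e, r) = |B_{S}(e, r)|$ and $V'(e, r) = |B_{S'}(e, r)|$, so the inclusions give $V'(e, r) \le V(e, C' r)$ and $V(e, r) \le V'(e, C r)$. Combining these with the hypothesis $V(e, r) \simeq r^{D}$, the upper bound $V'(e, r) \le V(e, C' r) \lesssim r^{D}$ and the lower bound $V'(e, C r) \ge V(e, r) \gtrsim r^{D}$ (equivalently $V'(e, s) \gtrsim s^{D}$ after rescaling) together yield $V'(e, r) \simeq r^{D}$. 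Finally, since Cayley graphs are homogeneous, $V'(x, r) = V'(e, r)$ for every $x \in G$, so $(G, \mu')$ has polynomial growth of the same degree $D$.

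The only real subtlety is justifying the finiteness of $C$ and $C'$: this uses both the finiteness of $S, S'$ and the fact that each generates $G$, which is implicit in the standing assumption that the Cayley graphs are (connected) infinite graphs. Beyond that point, the argument is essentially automatic, so there is no serious obstacle; even the degree $D$ of polynomial growth is preserved.
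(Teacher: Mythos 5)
Your argument is correct and is essentially the paper's own proof: both establish the bi-Lipschitz comparison of word metrics via the constant $a=\max_{s'\in S'} d_S(e,s')$ (your $C$), translate it into ball inclusions, and use that $\mu\equiv\mu'\equiv 1$ so volumes are cardinalities. Your write-up is slightly more explicit about the symmetric direction and about why the constants are finite, but there is no substantive difference.
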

    \begin{proof}
    	Assume $S=\{s_1, s_2,\cdots,s_k\}$ and $S'=\{s_1', s_2',\cdots,s_l'\}$, Let $B(e, r)$ and $B^{\prime}(e,r)$ be the ball centered at $e$ with radius $r$ in
    the corresponding graph $(G, S)$ and $(G, S^{\prime})$ respectively.
     Then for any $z\in B^{\prime}(e,n)$, $z$ can be represented in the form $z=s_{l_1}'s_{l_2}'\cdots s_{l_t}' $ where $s_{l_i}'\in S'$ and $l_t =d^{\prime}(e,z) \leq n$.
    	
    	For $1\leq i \leq l$, setting $a_i=d(e,s_i')$  and $a=\max\{a_1,\cdots, a_l\}$,
    	 we have $$d(e,z)\leq a d'(e,z),$$
    	which implies that
    	$$V'(e,r)\leq  V(e,ar)\lesssim r^{D}.$$
    	
    	Applying the same argument, we can derive that $V'(e,r)\gtrsim r^{D}$, which concludes our claim.
    \end{proof}

    \begin{definition}\rm
    	We say a finitely generated group $G$ has polynomial growth, if its corresponding Cayley graph $(G,\mu)$  with some generating set $S$ has polynomial growth.
    \end{definition}

    \begin{proposition}\rm\label{pro6}
   	Let $(G, \mu)$ be the Cayley graph generated by a finite set.
   	If $(G, \mu)$ satisfies  the volume doubling condition  \hyperref[VD]{$(\text{VD})$}, then it also satisfies the Poincar\'{e} inequality \hyperref[PI]{$(\text{PI})$}.
   \end{proposition}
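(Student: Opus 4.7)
The plan is to exploit the homogeneity of a Cayley graph: left translation by any $g \in G$ is an isometry that preserves the edge weights, hence $V(x,r) = V(e,r)$ for every $x \in G$, and it suffices to establish \hyperref[PI]{$(\text{PI})$} at $x_{0} = e$. Since $\mu(x) \equiv 1$, the classical variance identity
\begin{equation*}
\sum_{x \in B(e,r)} (f(x) - f_{B})^{2} \;=\; \frac{1}{2\,V(e,r)} \sum_{x, y \in B(e,r)} (f(x) - f(y))^{2}
\end{equation*}
reduces the task to bounding the double sum on the right by $r^{2}\,V(e,r)$ times the Dirichlet energy over a ball of radius comparable to $r$.

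The main ingredient is a path-counting argument adapted to the group structure. For each $g \in B(e,2r)$, fix once and for all a geodesic word $\gamma_{g} = (s_{i_{1}}, \ldots, s_{i_{\ell(g)}})$ of length $\ell(g) = d(e,g) \leq 2r$ representing $g$ as a product of generators from $S$. Given $x, y \in B(e,r)$ with $g = x^{-1}y$, the translated sequence $x,\, x s_{i_{1}},\, x s_{i_{1}} s_{i_{2}},\, \ldots,\, y$ is a path from $x$ to $y$ whose intermediate vertices all lie in $B(e, 3r)$. Cauchy--Schwarz along this path gives
\begin{equation*}
(f(y) - f(x))^{2} \;\leq\; 2r \sum_{k=1}^{\ell(g)} \bigl( f(x s_{i_{1}} \cdots s_{i_{k}}) - f(x s_{i_{1}} \cdots s_{i_{k-1}}) \bigr)^{2}.
\end{equation*}
Summing over $(x, y) \in B(e,r) \times B(e,r)$, the contribution of each oriented edge $(u, v)$ with $u^{-1} v \in S$ is weighted by the number of triples $(x, g, k)$ for which $(u, v)$ appears as the $k$-th step of $x \gamma_{g}$; but once $(g, k)$ are chosen, $x$ is forced to equal $u\,(s_{i_{1}} \cdots s_{i_{k-1}})^{-1}$. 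Hence this multiplicity is at most $\sum_{g \in B(e,2r)} \ell(g) \leq 2r \cdot V(e, 2r)$.

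Combining the above and dividing by $V(e, r)$ yields
\begin{equation*}
\sum_{x \in B(e,r)} (f(x) - f_{B})^{2} \;\lesssim\; r^{2}\, \frac{V(e, 2r)}{V(e, r)} \sum_{\substack{u \sim v \\ u, v \in B(e, 3r)}} (f(v) - f(u))^{2},
\end{equation*}
and \hyperref[VD]{$(\text{VD})$} absorbs the volume ratio, giving \hyperref[PI]{$(\text{PI})$} with enlargement factor $3$ on the right-hand side. The anticipated obstacle is twofold: first, the bookkeeping in the multiplicity count, which relies essentially on the translation invariance of the Cayley graph; second, reducing the enlargement factor from $3r$ down to the $2r$ appearing in the definition of \hyperref[PI]{$(\text{PI})$}. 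The second point is handled by a standard Whitney-type covering argument, namely that an $L^{2}$-Poincar\'{e} inequality with any fixed enlargement factor self-improves under \hyperref[VD]{$(\text{VD})$} to one with factor $2$, at the cost of enlarging only the multiplicative constant.
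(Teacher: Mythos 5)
Your proposal is correct and follows essentially the same route as the paper: the variance/Jensen identity, translation of a fixed geodesic word for each $g=x^{-1}y$, Cauchy--Schwarz along the resulting path, the injectivity-based multiplicity count for each edge, and \hyperref[VD]{$(\text{VD})$} to absorb the ratio $V(e,2r)/V(e,r)$. The only (harmless) detour is the final Whitney-type self-improvement from enlargement factor $3$ to $2$, which is not needed: since the word for $g$ is geodesic and both endpoints $x$ and $xg$ lie in $B(e,r)$, every intermediate vertex $w$ satisfies $\min\bigl(d(x,w),d(xg,w)\bigr)\leq r$ and hence already lies in $B(e,2r)$, which is exactly the observation the paper uses.
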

   \begin{remark}\rm
   The above proposition can be found in \cite{CS} without a detailed proof, for completeness and convenience, we provide a full proof here.
   \end{remark}
   \begin{proof}
    For any \(x_0\in G\), and any positive integer \(n\) and any \(f\in \ell (G)\),  letting
   	$$f_B=\frac{1}{V(x_0,n)}\sum\limits_{x\in B(x_0,n)}f(x),$$
   	and applying Jensen's inequality, we obtain
   	\begin{align}\label{ex-1}
   		\sum_{x\in B(x_0,n)} |f(x)&-f_B|^2\leq \frac{1}{V(x_0,n)} \sum_{x\in B(x_0,n)} \sum_{y\in B(x_0,n)} |f(x)-f(y)|^2
   		\nonumber \\
   		&=\frac{1}{V(x_0,n)}  \sum_{z\in B(e,2n)} \sum_{x\in A_z}  |f(x)-f(xz)|^2
   	\end{align}
   	where $e$ is the neutral element of $G$ and
   	$$A_z=\{x\in G:  \mbox{$x\in B(x_0,n)$ and $xz\in B(x_0,n)$} \}.$$

   	Since  $z\in B(e,2n)$ can be represented in the form $z=s_1s_2\cdots s_k $ where $s_i\in S$ and $k\leq 2n$,
   	we have
   	\begin{align}\label{ex-2}
   		\sum_{x\in A_z} |f(x)&-f(xz)|^2=  \sum_{x\in A_z} \left|f(x)-f(xs_1) +f(xs_1)-f(xs_1s_2) \right. \nonumber \\
   		&\left.\qquad\qquad\qquad\qquad +\cdots+f(xs_1\cdots s_{k-1})-f(xs_1\cdots s_{k}) \right|^2
   		\nonumber \\
   		&\leq \sum_{x\in A_z} \sum_{0\leq i,j\leq k}|f(xs_1\cdots s_{i-1})-f(xs_1\cdots s_{i})|
   		\nonumber \\
   		& \qquad\qquad\qquad\times |f(xs_1\cdots s_{j-1})-f(xs_1\cdots s_{j})|
   		\nonumber \\
   		&\leq 2n\sum_{x\in A_z} \sum_{0\leq i\leq k}|f(xs_1\cdots s_{i-1})-f(xs_1\cdots s_{i})|^2.
   	\end{align}
   	
    For any $x\in A_z$ and any positive integers $0\leq i\leq k$,	noting that $x\in B(x_0,n)$ and $xz\in B(x_0,n)$, we have
    $$xs_1\cdots s_{i} \in B(x_0,2n).$$
   	It follows that
   	\begin{align}\label{ex-3}
   		\sum_{x\in A_z}|f(xs_1\cdots s_{i-1})-f(xs_1\cdots s_{i})|^2\leq
   		|S|\sum_{x,y \in B(x_0,2n)} \mu_{xy}|f(x)-f(y)|^2.
   	\end{align}
   	Substituting this into (\ref{ex-2}), we obtain
   	\begin{align}\label{ex-4}
   		\sum_{x\in A_z} |f(x)-f(xz)|^2\leq 4 n^2|S|\sum_{x,y \in B(x_0,2n)} \mu_{xy}|f(x)-f(y)|^2.
   	\end{align}

    Finally, combining (\ref{ex-4}) with (\ref{ex-1}), we conclude that
   	\begin{align*}
   		\sum_{x\in B(x_0,n)} |f(x)-f_B|^2\leq 4\frac{V(e,2n)}{V(x_0,n)}|S|n^2\sum_{x,y \in B(x_0,2n)} \mu_{xy}|f(x)-f(y)|^2.
   	\end{align*}
   	
   	Since $V(x,n)=V(e,n)$ for any $x\in G$ and $(G,\mu)$ satisfies \hyperref[VD]{$(\text{VD})$}, the above implies that
   	\begin{align}
   		\sum_{x\in B(x_0,n)} |f(x)-f_B|^2\lesssim n^2\sum_{x,y \in B(x_0,2n)}\mu_{xy} |f(x)-f(y)|^2,
   	\end{align}
    which yields the Poincar\'{e} inequality \hyperref[PI]{$(\text{PI})$}.
   	 \end{proof}

   	 \begin{corollary}\rm\label{co6}
   		If a finitely generated group $G$ has polynomial growth (\ref{6-1}),
   		then its Cayley graph $(G, \mu)$ satisfies the volume doubling property \hyperref[VD]{$(\text{VD})$} and the Poincar\'{e} inequality \hyperref[PI]{$(\text{PI})$}.
   		Moreover, $(G, \mu)$ is $L^p$\text{-}parabolic when $p\geq \frac{D}{2}$.
   	\end{corollary}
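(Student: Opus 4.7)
The plan is to reduce the statement to Corollary \ref{co2} by verifying each of the three geometric hypotheses \hyperref[VD]{$(\text{VD})$}, \hyperref[PI]{$(\text{PI})$} and \hyperref[p0]{$(P_{0})$} in turn. For volume doubling, I would first exploit the homogeneity of a Cayley graph: left translation by any $x\in G$ is a graph automorphism fixing the edge weights, so $V(x,r)=V(e,r)$ for every $x$. The polynomial bound $V(e,r)\simeq r^{D}$ then gives $V(x,2r)\simeq 2^{D}V(x,r)$ uniformly in $x$, which is precisely \hyperref[VD]{$(\text{VD})$}. Proposition \ref{pro6} then supplies \hyperref[PI]{$(\text{PI})$} for free. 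The condition \hyperref[p0]{$(P_{0})$} is essentially automatic on any Cayley graph generated by a finite set $S$: since $\mu(x)=1$ and $\mu_{xy}=1/|S|$ whenever $x\sim y$, one can take $\alpha=1/|S|$.

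With the three geometric conditions in hand, my next step is to verify the volume hypothesis $V(o,r)\lesssim r^{2p}(\log r)^{p-1}$ appearing in Corollary \ref{co2}. If $p>D/2$, then $r^{D}\lesssim r^{2p}$ for all large $r$, and the bound is immediate. If $p=D/2$ with $p>1$, the equality $r^{D}=r^{2p}$ is absorbed by the logarithmic factor, since $(\log r)^{p-1}\to\infty$. In either case Corollary \ref{co2} yields $L^{p}$-parabolicity.

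The one delicate point is the low-dimensional boundary, namely $D\leq 2$ with $p=1$ (so $p=D/2$), which falls outside the range of Corollary \ref{co2}. I would dispatch this separately via the Nash--Williams test \eqref{votest}: from $V(o,n)\simeq n^{D}$ we obtain
\[
\sum_{n=1}^{\infty}\frac{n}{V(o,n)}\simeq \sum_{n=1}^{\infty}n^{1-D}=\infty\quad\text{for }D\leq 2,
\]
so $(G,\mu)$ is parabolic, hence $L^{1}$-parabolic; Corollary \ref{st-parabolic} then propagates this to every $L^{p}$ with $p\geq 1$. No step poses a real obstacle; the main subtlety is simply the bookkeeping at the boundary $2p=D$, where one must distinguish whether the logarithm actually helps ($p>1$) or whether one has to fall back on Nash--Williams ($p=1$).
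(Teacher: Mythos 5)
Your proposal follows essentially the same route as the paper: deduce \hyperref[VD]{$(\text{VD})$} from the polynomial growth, invoke Proposition \ref{pro6} for \hyperref[PI]{$(\text{PI})$}, and apply Corollary \ref{co2}. You are in fact slightly more careful than the paper, which leaves implicit both the verification of \hyperref[p0]{$(P_0)$} and the endpoint $p=1$ (when $D\leq 2$) that falls outside Corollary \ref{co2}; your Nash--Williams fallback handles the latter correctly.
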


   	\begin{proof}
   		The volume doubling condition \hyperref[VD]{$(\text{VD})$}  follows volume condition
   		(\ref{6-1}). Then, applying Proposition \ref{pro6} and Corollary \ref{co2}, we finish the proof.   		   	
   	\end{proof}

      	 \begin{remark}\rm
   		Let us emphasize that the finitely generated group with polynomial growth is a large and well-studied class of groups. In fact, from Gromov's famous work \cite{Gro}, a finitely generated group $G$ has polynomial growth if and only if it is virtually nilpotent, which means it contains a nilpotent subgroup of finite index.
   	\end{remark}
   	
   	\begin{example}\rm
   		$\mathbb{Z}^d$ is $L^p$\text{-}parabolic for $p\geq \frac{d}{2}$,
   		while it is not $L^p$\text{-}parabolic for $p<\frac{d}{2}$.
  This is because that  $S(e,n)\simeq n^{d-1}$ and $V(e,n)\simeq n^d$, and by Theorem \ref{tvcon2}, we can derive the above results.
   	\end{example}

   	\begin{example}\rm
   		The discrete Heisenberg group
   		$$\left\{\begin{pmatrix}
   			1 & b & c \\
   			0 & 1 & a \\
   			0 & 0 & 1
   		\end{pmatrix}
   		\;\middle|\;
   		a, b, c \in \mathbb{Z}\right\},$$
   		is $L^p$\text{-}parabolic for $p\geq 2$.  This follows from the well-known  fact that  $V(e,n)\simeq n^4$ on discrete Heisenberg group and  Corollary \ref{co6}.
   	\end{example}

\textbf{Acknowledgements.}
The authors would like to thank Prof. Grigor'yan from University of Bielefeld for many valuable suggestions and discussions.

\end{document}